\newcommand{\V}[1]{\mathcal{V}\!\left({#1}\right)}
\newcommand{\gap}{\hspace{0.5em}}
\newcommand{\vgap}{\vspace{0.25em}}
\newcommand{\padding}{\rule[-1.45ex]{0pt}{0.2em}\gap}
\newcommand{\oddrow}{\rowcolor[gray]{0.95}}
\newcommand{\evnrow}{}
\newlength{\matrixheight}
\newcommand{\calulatepadding}[1]{\settoheight{\matrixheight}{\hbox{#1}}{#1}}
\theoremstyle{plain}
\newtheorem{thm}{Theorem}
\newtheorem{pro}[thm]{Proposition}
\newtheorem{lem}[thm]{Lemma}
\newtheorem{cor}[thm]{Corollary}
\theoremstyle{definition}
\newtheorem{dfn}[thm]{Definition}
\newtheorem{dfnrem}[thm]{Definition-Remark}
\newtheorem{rem}[thm]{Remark}
\newtheorem{exa}[thm]{Example}
\newcommand{\Bmu}{\mbox{$\raisebox{-0.59ex}
  {$l$}\hspace{-0.18em}\mu\hspace{-0.88em}\raisebox{-0.98ex}{\scalebox{2}
  {$\color{white}.$}}\hspace{-0.416em}\raisebox{+0.88ex}
  {$\color{white}.$}\hspace{0.46em}$}{}}
\DeclareMathOperator{\rk}{rk}
\DeclareMathOperator{\Defo}{Def}
\DeclareMathOperator{\Cl}{Cl}
\DeclareMathOperator{\codim}{codim}
\DeclareMathOperator{\Hom}{Hom}
\DeclareMathOperator{\hcf}{hcf}
\DeclareMathOperator{\Image}{Im}
\DeclareMathOperator{\Nef}{Nef}
\DeclareMathOperator{\Amp}{Amp}
\DeclareMathOperator{\Spec}{Spec}
\DeclareMathOperator{\Sing}{Sing}
\DeclareMathOperator{\wG}{wGr}
\DeclareMathOperator{\Irr}{Irr}
\DeclareMathOperator{\Pf}{Pf}
\newcommand{\bv}{\mathbf{v}}
\newcommand{\oo}{\mathcal{O}}
\newcommand{\QQ}{\mathbb{Q}}
\newcommand{\RR}{\mathbb{R}}
\newcommand{\PP}{\mathbb{P}}
\newcommand{\ZZ}{\mathbb{Z}}
\newcommand{\NN}{\mathbb{N}}
\newcommand{\FF}{\mathbb{F}}
\newcommand{\CC}{\mathbb{C}}
\newcommand{\GG}{\mathbb{G}}
\newcommand{\LL}{\mathbb{L}}
\newcommand{\TT}{\mathbb{T}}
\newcommand{\Cstar}{\CC^\times}
\newcommand{\Xsurf}[2]{X_{#1, \, #2}}
\newcommand{\Bsurf}[2]{B_{#1, \, #2}}
\newcommand{\Ssurf}[2]{S_{#1, \, #2}}
\begin{document}
\bibliographystyle{plain}

\title{Del Pezzo surfaces with $\frac1{3}(1,1)$ points}

\author{Alessio Corti \thanks{a.corti@imperial.ac.uk}\\
 Department of Mathematics\\
 Imperial College London\\
 London, SW7 2AZ, UK\\
\and
 Liana Heuberger \thanks{liana.heuberger@imj-prg.fr}\\
 Institut Mathematique de Jussieu\\
 4 Place Jussieu\\
 75005 Paris, France}

\date{7\textsuperscript{th} September, 2015}

\maketitle

\begin{abstract} We classify non-smooth del Pezzo surfaces with $\frac1{3}(1,1)$
  points in $29$ qG-deformation families grouped into six unprojection
  cascades (this overlaps with work of Fujita and Yasutake
  \cite{FY14}), we tabulate their biregular invariants, we give good
  model constructions for surfaces in all families as
  degeneracy loci in rep quotient varieties, and we prove that
  precisely $26$ families admit qG-degenerations to toric
  surfaces. This work is part of a program to study mirror symmetry
  for orbifold del Pezzo surfaces \cite{surfaces15}.
\end{abstract}

\tableofcontents{}
 
\section{Introduction}
\label{sec:introduction}


In this paper we:
\begin{enumerate}[(I)]
\item Classify non-smooth del Pezzo surfaces with $\frac1{3}(1,1)$
  points in precisely $29$ qG-deformation families. We further
  structure the classification into six unprojection cascades,
  determine their biregular invariants and their directed MMP together
  with a distinguished configuration of negative curves on the minimal
  resolution. This overlaps with work of Fujita and
  Yasutake~\cite{FY14}.
\item Construct good models for surfaces in all families as
  degeneracy loci in rep quotient varieties. In all but two
  cases, the rep quotient variety is a simplicial toric variety.
\item Prove that precisely $26$ of the $29$ families admit a qG-degeneration
  to a toric surface. 
\end{enumerate}

The classification is summarised in table~\ref{table:2} and
table~\ref{table:1}, which also plot invariants and provide good model
constructions of surfaces in all families. 

This work is part of a program to understand mirror symmetry for
orbifold del Pezzo surfaces \cite{surfaces15, KNP14, OP14,
  tveiten14:_maxim_lauren,prince:_smoot_toric_fano_surfac_using,
  coates:_lauren} and it is aimed specifically at giving evidence
for the conjectures made in \cite{surfaces15}.

The rest of the introduction is organised as follows: in
\S~\ref{sec:our-results} we give precise statements of our main
results; in \S~\ref{sec:context} we say a few words about the context
of \cite{surfaces15}; in \S~\ref{sec:structure-paper} we outline the
structure of the paper.

\subsection{Our results}
\label{sec:our-results}

\subsubsection{The classification and its cascade structure}
\label{sec:class-casc-struct}

\begin{dfn}
  \label{dfn:5}
  A $\frac1{n}(a,b)$ \emph{point} is a surface cyclic quotient singularity
  $\CC^2/\Bmu_n$ where $\Bmu_n$ acts linearly on $\CC^2$ with weights
  $a$, $b\in \bigl( \frac1{n}\ZZ\bigr)/\ZZ$. We always assume no
  stabilisers in codimension $0$, $1$, that is,
  $\hcf(a,n)=\hcf(b,n)=1$.

  A \emph{del Pezzo surface} is a surface $X$ with cyclic quotient
  singularities and $-K_X$ ample.
 
  The \emph{Fano index} of $X$ is the largest positive integer $f>0$
  such that $-K_X=fA$ in the Class group $\Cl X$.
\end{dfn}

\begin{rem}
  \label{rem:11}
  In this paper we view a del Pezzo surface $X$ with quotient
  singularities as a variety. Such a surface is in a natural way a
  smooth orbifold (or DM stack), but we mostly ignore this
  structure. Thus for us $\Cl X$ is the Class group of Weil divisors
  on $X$ modulo linear equivalence. In particular, although $K_X$ is a
  Cartier divisor on the orbifold, we think of it as a $\QQ$-Cartier
  divisor on the underlying variety (the coarse moduli space of the
  orbifold) and then to say that it is ample is to say that a positive
  integer multiple is Cartier and ample.
\end{rem}

See \cite{surfaces15} for a discussion of qG-deformations of del Pezzo
surfaces with cyclic quotient singularities. In particular, it is
explained there that the singularity $\frac1{3}(1,1)$ is qG-rigid and
the degree $d=K^2$ is locally constant in qG-families.

We classify qG-deformation families of del Pezzo surfaces with
$k\geq 1$ $\frac1{3}(1,1)$ points.  It follows for example from the
proof of \cite[Lemma~6]{surfaces15} that del Pezzo surfaces $X$ with
fixed number $k$ of $\frac1{3}(1,1)$ points and $d=K_X^2$ form an
algebraic stack $\mathfrak{M}_{k,d}$ of dimension
\[
\dim \mathfrak{M}_{k,d} = -\chi (X,\Theta_X)=-h^0(X,\Theta_X)+h^1(X,\Theta_X)
\]
where $\Theta_X$ is the sheaf of derivations of $X$. Note that this
dimension can be negative when $X$ has a continuous group of
automorphisms. It is always the case that $H^2(X,\Theta_X)=(0)$ but in
some cases both $H^0(X,\Theta_X)$ and $H^1(X,\Theta_X)$ are non-zero. 

The following two theorems, together with theorem~\ref{thm:6} below,
are our main results:

\begin{thm}
  \label{thm:3}
  There are precisely $3$ qG-deformation families of del Pezzo surfaces
  with $k\geq 1$ $\frac1{3}(1,1)$ points and Fano index $f\geq
  2$. They are:
  \begin{enumerate}[(1)]
  \item $\Ssurf{1}{25/3}=\PP(1,1,3)$ with $k=1$, $K^2=\frac{25}{3}$ and $f=5$;
  \item $\Bsurf{1}{16/3}$: the family of weighted hypersurfaces $X_4\subset
    \PP(1,1,1,3)$ with $k=1$, $K^2=\frac{16}{3}$ and $f=2$;
  \item $\Bsurf{2}{8/3}$: the family of weighted hypersurfaces $X_6\subset
    \PP(1,1,3,3)$ with $k=2$, $K^2=\frac{8}{3}$ and $f=2$. 
  \end{enumerate}
\end{thm}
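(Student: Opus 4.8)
The plan is to study the graded ring $R(X,A)=\bigoplus_{m\ge0}H^0\!\left(X,\oo_X(mA)\right)$, where $A$ is the ample Weil divisor with $-K_X=fA$, and to pin down the admissible triples $(f,k,K_X^2)$ by playing orbifold Riemann--Roch against the Hodge index theorem. First I would record the local picture at a $\frac1{3}(1,1)$ point $p$: the local class group is $\ZZ/3$ and $-K_X$ restricts to a generator, so $f[A]_p$ generates $\ZZ/3$; in particular $A$ is not Cartier at $p$ and $3\nmid f$. Orbifold Riemann--Roch then reads
\[
  \chi\!\left(X,\oo_X(mA)\right)=1+\tfrac12\,mA\cdot(mA-K_X)+\sum_p c_p(mA),
\]
with the higher cohomology vanishing by Kawamata--Viehweg since $mA-K_X=(m+f)A$ is ample for $m\ge1$; the correction $c_p(mA)$ depends only on the class $c\in\{0,1,2\}$ of $mA$ in $\ZZ/3$ and equals $0,0,-\tfrac13$ respectively, as one checks against $\PP(1,1,3)$.

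The bound on $f$ comes from the minimal resolution $\pi\colon\widetilde X\to X$, where each $\frac1{3}(1,1)$ point becomes a disjoint $(-3)$-curve $E_i$; here $\widetilde X$ is a smooth rational surface, $K_{\widetilde X}^2=K_X^2-\tfrac{k}3$, and $\rho(\widetilde X)=10-K_{\widetilde X}^2$. Since $\pi^*(-K_X)$ is nef and big with $\pi^*(-K_X)\cdot E_i=0$, the Hodge index theorem makes $E_1,\dots,E_k$ span a rank-$k$ negative definite sublattice of the orthogonal complement of $\pi^*K_X$, so $k\le\rho(\widetilde X)-1$ and hence
\[
  K_X^2\le 9-\tfrac{2k}3.
\]
The local intersection correction at a $\frac1{3}(1,1)$ point lies in $\tfrac13\ZZ$, so $A^2\in\tfrac13\ZZ$ and therefore $A^2\ge\tfrac13$; combining this with $K_X^2=f^2A^2$ and $k\ge1$ gives $\tfrac{f^2}3\le K_X^2\le\tfrac{25}3$, so $f\le5$. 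With $3\nmid f$ and $f\ge2$ only $f\in\{2,4,5\}$ survive.

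It remains to treat the three indices. For $f=5$ all inequalities saturate, forcing $k=1$, $K_X^2=\tfrac{25}3$, $A^2=\tfrac13$; the Hilbert function $1,2,3,5,\dots$ produced by Riemann--Roch identifies $R(X,A)$ with $\CC[x_0,x_1,y]$, $\deg x_i=1$, $\deg y=3$, so $X=\PP(1,1,3)=\Ssurf{1}{25/3}$. The index $f=4$ is excluded at once: there $A$ has local class $2$ at each point, the degree bound forces $A^2=\tfrac13$, and then $h^0(X,A)=1+\tfrac56-\tfrac{k}3\notin\ZZ$ for every $k$. For $f=2$ write $A^2=\tfrac{s}3$ with $s\equiv k\pmod3$; integrality of $h^0(X,A)=1+\tfrac{s}2$ forces $s$ even, and $K_X^2\le9-\tfrac{2k}3$ leaves the finite list $(s,k)\in\{(2,2),(2,5),(2,8),(4,1),(4,4)\}$. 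The cases $(2,5)$ and $(2,8)$ die immediately, because Riemann--Roch gives $h^0(X,2A)\le2$ while the two non-proportional sections of the ample divisor $A$ force $h^0(X,2A)\ge3$. The survivors $(s,k)=(4,1)$ and $(2,2)$ yield graded rings with generators in degrees $1,1,1,3$ and $1,1,3,3$ and a single relation in degree $4$, respectively $6$, so $X=\Bsurf{1}{16/3}=X_4\subset\PP(1,1,1,3)$ and $X=\Bsurf{2}{8/3}=X_6\subset\PP(1,1,3,3)$.

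I expect the main obstacle to be the remaining $f=2$ candidate $(s,k)=(4,4)$, which the crude degree-two count does not kill --- here $h^0(X,A)=3$ with a single relation in degree $2$, and one must descend to degree $3$, where Riemann--Roch forces four further generators, to show that the resulting ring cannot define a del Pezzo surface carrying exactly four $\frac1{3}(1,1)$ points with $-K_X$ divisible by $2$. The second delicate point is upgrading each numerical type to a single qG-deformation family: for $f=5$ the graded ring is rigidly determined, so $\PP(1,1,3)$ is the unique surface; for $f=2$ one checks that the hypersurfaces form an irreducible linear system whose general member is quasi-smooth with precisely the prescribed basket of $\frac1{3}(1,1)$ points, after which the qG-rigidity of $\frac1{3}(1,1)$ and the local constancy of the degree, recalled in the excerpt, guarantee exactly one family in each case.
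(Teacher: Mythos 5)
Your route is genuinely different from the paper's: the paper deduces theorem~\ref{thm:3} from the directed-MMP classification of theorem~\ref{thm:4}, the lattice and covering-space bounds of proposition~\ref{pro:2}, and the explicit curve computations of proposition~\ref{pro:3}(a), whereas you work with the graded ring $R(X,A)$, orbifold Riemann--Roch and the Hodge index theorem. The numerical skeleton of your argument checks out: $3\nmid f$ because $-K_X$ generates the local class group $\ZZ/3\ZZ$; the corrections $0,0,-\frac1{3}$ (normalised so that ``class $1$'' is the class of a line, whence $[-K_X]_p=2$) are correct; the Hodge-index bound $K_X^2\le 9-\frac{2k}{3}$ is right; $f=4$ dies by non-integrality of $\chi(X,A)$ exactly as you say; and the count $h^0(X,2A)\ge 3$ from two non-proportional sections of $A$ does kill $(s,k)=(2,5)$ and $(2,8)$. (The assertions that the Hilbert functions ``identify'' the rings of the three survivors need the standard supplementary steps --- injectivity of $S^mH^0(X,A)\to H^0(X,mA)$ and generation in the stated degrees --- but those are routine.)

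The genuine gap is the one you flagged yourself: the case $f=2$, $(s,k)=(4,4)$, i.e.\ $k=4$, $K_X^2=16/3$, $A^2=4/3$, is never excluded, and your sketch is not yet an argument --- in degree $3$ Riemann--Roch gives $h^0(X,3A)=11$ against at most $7$ coming from $S^3H^0(X,A)$ modulo the quadric, so the ring acquires four new generators and sits in high codimension, where ``cannot define such a surface'' is precisely the hard content. Until this case is closed, the count of three families is not proved. Two ways to close it: (i) the paper's route --- here $r=10-K_X^2+k/3=6$, so by lemma~\ref{lem:1} the defect satisfies $\sigma\ge k-r/2=1$, hence $H_1(X^0;\ZZ)\ne 0$ by lemma~\ref{lem:3}, and covering-space theory produces a $3$-to-$1$ cover, \'etale over $X^0$, by a del Pezzo surface with at worst $\frac1{3}(1,1)$ points and $K^2=16$, which is impossible since your own bounds give $K^2\le 25/3$ when $k\ge 1$ and $K^2\le 9$ when the cover is smooth; or (ii) inside your framework --- since $[A]_p=1\ne 0$ at every singular point, every member of the net $|A|$ passes through all four points, and two general members meet exactly there because $4\times\frac1{3}=A^2$, so $\phi_{|A|}$ maps $X$ onto a plane curve, necessarily a conic by the degree bound $(de)^2\cdot\frac1{3}\le A^2$; pulling back a tangent line then gives $A\sim 2F$ for a fibre $F$, hence $-K_X\sim 4F$, contradicting maximality of $f=2$ (equivalently, reducing to the already-excluded index $4$). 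Either repair completes your proof.
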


\begin{thm}
  \label{thm:1}
  There are precisely $26$ qG-deformation families of del Pezzo surfaces
  with $k\geq 1$ $\frac1{3}(1,1)$ points and Fano index
  $f=1$. The numerical invariants of these surfaces are shown in
  table~\ref{table:1} in \S~\ref{sec:tables}. In that table
  $X_{k,\,d}$ denotes the unique family with
  $k$ $\frac1{3}(1,1)$ points, $K^2=d$ and $f=1$. The table also
  gives a good model construction of a surface $X$ in all families. 
\end{thm}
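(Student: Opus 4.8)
The plan is to prove the classification in three stages: a numerical upper bound, an explicit existence argument organised into the unprojection cascades, and a deformation-theoretic check that each numerical type constitutes a single qG-deformation family. For the upper bound I would analyse the anticanonical graded ring $R(X,-K_X)=\bigoplus_{m\ge 0}H^0(X,-mK_X)$ via orbifold Riemann--Roch. Since every singular point is a $\frac13(1,1)$ point, the local index is $3$, so $K_X^2\in\frac13\ZZ$ and each point contributes a fixed periodic correction term to $\chi(-mK_X)$. Passing to the minimal resolution $\pi\colon\widetilde X\to X$, where each $\frac13(1,1)$ point is replaced by a single $(-3)$-curve, Noether's formula $K_{\widetilde X}^2+\rho(\widetilde X)=10$ for the rational surface $\widetilde X$ gives $K_X^2=10-\rho(\widetilde X)+\tfrac{k}{3}$; since the $k$ exceptional curves together with $\pi^*(-K_X)$ are linearly independent one gets $\rho(\widetilde X)\ge k+1$, hence a linear bound of the shape $3d+2k\le 27$ with $d=K_X^2>0$ and $k\ge 1$. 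This already leaves a finite list of pairs $(k,d)$, which I would refine by requiring every coefficient $h^0(-mK_X)=\chi(-mK_X)$ to be a nonnegative integer.

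Next I would realise each admissible Hilbert series by an explicit surface, organising the constructions into six unprojection cascades. The top of each cascade is a surface of large degree visible directly as a weighted projective space, weighted hypersurface, or weighted complete intersection (as in the index $\ge 2$ cases of Theorem~\ref{thm:3}), and one descends by Kustin--Miller unprojection, each step introducing a further $\frac13(1,1)$ point and lowering the degree exactly as dictated by the change in Hilbert series. Presenting the resulting $X$ as a degeneracy locus in a rep quotient variety, as recorded in table~\ref{table:1}, supplies concrete equations from which I can verify that $X$ is quasismooth away from its $\frac13(1,1)$ points, has no worse singularities, and has $-K_X$ ample. This gives existence for the realised candidates, lets me discard those numerical types that cannot be realised by a genuine del Pezzo surface, and lets me read off the biregular invariants.

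Finally I would show that each realised numerical type is exactly one qG-deformation family. Because $\frac13(1,1)$ is qG-rigid, no singular point can be smoothed within a qG-family, so $k$ and $d$ are locally constant and $\mathfrak{M}_{k,d}$ is the correct parameter space. Using $\dim\mathfrak{M}_{k,d}=-\chi(X,\Theta_X)=-h^0(X,\Theta_X)+h^1(X,\Theta_X)$ I would compute the expected dimension for each candidate and match it against the dimension of the family swept out by the cascade construction, so that the construction dominates a single component and there is no room for further components. Restricting to $f=1$, that is, discarding the three families of Theorem~\ref{thm:3} for which $-K_X$ is divisible in $\Cl X$, then leaves precisely the $26$ families tabulated in table~\ref{table:1}.

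The step I expect to be the main obstacle is the last one: proving irreducibility of each $\mathfrak{M}_{k,d}$, i.e.\ that a given admissible type is not realised by two genuinely distinct families, so that the numerically admissible list and the explicitly constructed list coincide on the nose. The numerical bound and the cascade constructions make the upper bound and existence comparatively mechanical, but excluding extra components requires controlling $H^1(X,\Theta_X)$ and verifying that the explicit construction already accounts for every first-order qG-deformation; the dimension bookkeeping $\dim\mathfrak{M}_{k,d}=-\chi(X,\Theta_X)$ must be matched exactly against the cascade at each node, and it is here that the qG-rigidity of the $\frac13(1,1)$ points does the essential work.
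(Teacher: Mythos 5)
Your proposal takes a genuinely different route from the paper, but it has two fatal gaps. First, and most seriously, your scheme has no mechanism for ruling out numerically admissible pairs $(k,d)$ that are not realised by any surface---and such pairs genuinely exist. The pairs $(k,d)=\bigl(2,\tfrac{20}{3}\bigr)$, $\bigl(4,\tfrac{10}{3}\bigr)$, $\bigl(5,\tfrac{8}{3}\bigr)$ pass every test you propose: they satisfy $K_X^2=10-\rho(\widetilde X)+\tfrac{k}{3}$ with $\rho(\widetilde X)\geq k+1$, they satisfy your bound $3d+2k\leq 27$, and all coefficients of the Poincar\'e series are nonnegative integers; yet no del Pezzo surface with these invariants exists (remark~\ref{rem:8}). ``Discarding the types I cannot construct'' is not a proof of nonexistence. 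In the paper these cases are excluded only by the directed MMP of \S~\ref{sec:trees}, i.e.\ by a combinatorial analysis of configurations of negative curves on the minimal resolution; even the stronger intermediate refinement of proposition~\ref{pro:2}, which combines generic semipositivity of the orbifold tangent bundle with lattice theory and covering-space arguments, fails to kill $\bigl(5,\tfrac{8}{3}\bigr)$. Note also that your purely numerical stage is weaker than you suggest: the bound $3d+2k\leq 27$ together with $d\equiv k/3 \pmod{\ZZ}$ and $h^0(-K_X)\geq 0$ still leaves candidates with $k$ as large as $10$, so you cannot even recover $k\leq 6$ without further input.

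Second, your uniqueness step does not work as stated, and you have correctly identified it as the weak point: matching $\dim\mathfrak{M}_{k,d}=-\chi(X,\Theta_X)$ against the dimension of the constructed family cannot exclude a second component, since distinct components can have equal dimension and unobstructedness holds on every component. What is actually needed is the statement that \emph{every} del Pezzo surface with $k$ $\frac1{3}(1,1)$ points and $f=1$ arises from one of finitely many root surfaces by blowing up nonsingular points. This is exactly what theorem~\ref{thm:5} and corollary~\ref{cor:1} provide: contracting floating $(-1)$-curves, any such $X$ reaches one of $8$ explicitly known surfaces, so $\mathfrak{M}_{k,d}$ is dominated by an irreducible parameter space of point configurations over an irreducible root family; this gives irreducibility, completeness of the list, and the cascade structure all at once. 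This birational step---the heart of the paper---is entirely absent from your plan; qG-rigidity of $\frac1{3}(1,1)$ only fixes $k$ along qG-deformations and says nothing about irreducibility of the moduli stack. (A smaller point: your description of the cascades inverts the actual structure. Within each of the six cascades $k$ is constant and the steps are blow-ups at nonsingular points, equivalently anticanonical projections; the steps do not introduce additional $\frac1{3}(1,1)$ points.)
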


Detailed information on how to read the table is given in
\S~\ref{sec:tables}. In that section, we also introduce several
invariants and explain how to compute some that are not shown in the
table. For example, denoting by $X^0=X^{\text{nonsing}}=X\setminus
\Sing X$ the nonsingular locus of $X$, proposition~\ref{pro:3}(b)
states that $\pi_1(X^0)=(0)$ for all families except $\Xsurf{6}{1}$
and $\Xsurf{6}{2}$, for which $\pi_1(X^0)=\ZZ/3\ZZ$.

Next we discuss the finer structure of the classification.

\begin{dfn}
  \label{dfn:4} 
  A negative curve on $X$ is a compact curve $C\subset X$ with
  negative self-intersection number $C^2<0$.
  We say that a compact curve $C\subset X$ is a
  $(-m)$-curve if $C^2=-m$. Note that in general $m\in \QQ$.
  Let $P_1,\dots,P_k\in X$ be the singular points and denote by
  $X^0=X^\text{nonsing}=X\setminus \{P_1,\dots P_k\}$ the nonsingular locus
  of $X$. A $(-1)$-curve $C\subset X^0$ is called a \emph{floating}
  $(-1)$-curve.
\end{dfn}

Theorem~\ref{thm:3} and theorem~\ref{thm:1} are a straightforward
logical consequence of the minimal model program and the
following, which is proved in \S~\ref{sec:trees}:

\begin{thm}
  \label{thm:5}
  Let $X$ be a del Pezzo surface with $k\geq 1$ $\frac1{3}(1,1)$
  points. If $X$ has no floating $(-1)$-curves, then $X$ is one of the
  following surfaces, all constructed in table~\ref{table:2}
  and~\ref{table:1} and in the statement and proof of theorem~\ref{thm:4}:
\begin{enumerate}[(1)]
\item $k=1$ and either $X$ is a surface of the family of weighted
  hypersurfaces $\Bsurf{1}{16/3} = X_4\subset \PP(1,1,1,3)$, or
  $X=\Ssurf{1}{25/3}=\PP(1,1,3)$;
\item $k=2$ and either $X=\Xsurf{2}{17/3}$, or
  $X$ is a surface of the family of weighted hypersurfaces $\Bsurf{2}{8/3} = X_6\subset \PP(1,1,3,3)$;
\item $k=3$ and $X=\Xsurf{3}{5}$;
  \item $k=4$ and $X=\Xsurf{4}{7/3}$; 
\item $k=5$ and $X=\Xsurf{5}{5/3}$; 
\item $k=6$ and $X=\Xsurf{6}{2}$.
  \end{enumerate}
\end{thm}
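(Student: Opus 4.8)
The plan is to pass to the minimal resolution $\pi\colon \widetilde X \to X$ and reduce the statement to a combinatorial classification of configurations of negative curves on a smooth rational surface. The minimal resolution of a $\frac13(1,1)$ point is a single $(-3)$-curve, so $\widetilde X$ is a smooth projective rational surface carrying $k$ pairwise disjoint $(-3)$-curves $E_1,\dots,E_k=\pi^{-1}(\Sing X)$, with
\[
K_{\widetilde X}=\pi^\ast K_X-\tfrac13\sum_i E_i,\qquad K_{\widetilde X}^2=d-\tfrac{k}{3},\qquad \rho(\widetilde X)=10-d+\tfrac{k}{3},
\]
where $d=K_X^2$ and the last equality is Noether's formula on the rational surface $\widetilde X$. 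Since $-K_X$ is ample, $-K_{\widetilde X}=\pi^\ast(-K_X)+\tfrac13\sum_i E_i$ is big, and this is visibly its Zariski decomposition: the positive part $P=\pi^\ast(-K_X)$ is nef and big with $P^2=d$ and $P\cdot E_i=0$. The first payoff is a strong restriction on curves: for any irreducible $C\neq E_1,\dots,E_k$ one has $\pi^\ast(-K_X)\cdot C>0$, hence $-K_{\widetilde X}\cdot C=\pi^\ast(-K_X)\cdot C+\tfrac13\sum_i E_i\cdot C>0$. Thus the $E_i$ are the only curves on which $-K_{\widetilde X}$ is non-positive, and \emph{$X$ has no floating $(-1)$-curve if and only if $\widetilde X$ carries no $(-1)$-curve disjoint from $E:=\bigcup_i E_i$}.

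Next I would analyse the dual graph $\Gamma$ of negative curves on $\widetilde X$. By the computation above the irreducible negative curves are the $E_i$, together with $(-1)$- and $(-2)$-curves, and every $(-1)$-curve meets $E$. The Hodge index theorem applied to $P$ shows that $P^\perp\subset\operatorname{NS}(\widetilde X)_{\RR}$ is negative definite of rank $\rho-1$; it contains the $E_i$ and all $(-2)$-curves, so these span a negative definite lattice, forcing $\Gamma$ to be a forest with a bounded number of vertices and edges and giving at once the numerical bound $k\le\rho-1=9-d+\tfrac{k}{3}$. This already confines $(k,d)$ to a finite range; the work is to cut it down to the list of the theorem and to rule out $k\ge 7$.

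The core step is a directed MMP on $\widetilde X$: I would contract $(-1)$-curves one at a time, each necessarily meeting $E$ by the no-floating hypothesis, recording how the self-intersections of the images of the $E_i$ increase and how $\Gamma$ simplifies, until reaching a minimal rational surface ($\PP^2$ or a Hirzebruch surface). Running this in reverse exhibits each admissible $\widetilde X$ as an explicit iterated blow-up producing $k$ disjoint $(-3)$-curves with no floating $(-1)$-curve, so that the pair $(k,d)$ can be read off and matched with the constructions in table~\ref{table:2}, table~\ref{table:1} and theorem~\ref{thm:4}; the low-$k$ extremes recover $\PP(1,1,3)$ (whose resolution is $\FF_3$) and the hypersurface families $X_4\subset\PP(1,1,1,3)$ and $X_6\subset\PP(1,1,3,3)$ via blow-ups of $\FF_3$.

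I expect the main obstacle to be the completeness and finiteness of this case analysis: one must show that the positivity of $P$ together with the absence of floating $(-1)$-curves admits \emph{exactly} the configurations listed --- in particular that $k$ cannot exceed $6$ --- rather than merely bounding the possibilities. Concretely, the delicate point is controlling, at each contraction, which $E_i$ are met and with what multiplicity, so that no admissible tree is overlooked and none of the a priori numerically possible $(k,d)$ with $k\ge 7$ survives the requirement that $-K_X$ remain ample.
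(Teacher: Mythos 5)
Your overall strategy coincides with the paper's: pass to the minimal resolution, note that every negative curve other than the exceptional $(-3)$-curves is a $(-1)$-curve meeting the exceptional locus, and run a directed MMP whose combinatorics is tracked by configurations of negative curves on the resolutions (this is exactly how theorem~\ref{thm:4} is proved in \S~\ref{sec:trees}). The genuine gap is that the two steps you defer --- the bound $k\leq 6$ and the completeness of the case analysis --- are the entire content of the proof, and neither is reachable from the tools you set up. Your Hodge-index bound $k\leq \rho(\widetilde X)-1=9-d+k/3$ only gives $d\leq 9-\frac{2k}{3}$, hence $k\leq 13$ (and $k\leq 10$ if one adds orbifold Riemann--Roch, $h^0(X,-K_X)=1+d-\frac{k}{3}\geq 0$, which you never invoke). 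The paper cannot eliminate $k\geq 7$ by lattice positivity either; it needs two further inputs (proposition~\ref{pro:2}): first, Miyaoka's generic semi-positivity of the orbifold tangent sheaf, which gives $\widehat{c}_2(X)\geq 0$ and hence the stronger inequality $d\leq 12-\frac{4k}{3}$, which combined with Riemann--Roch yields $k\leq 7$; second, a lattice/covering-space argument via the defect $\sigma$ (lemmas~\ref{lem:1} and~\ref{lem:3}): when $\sigma\geq k-r/2>0$, one has $H_1(X^0;\ZZ)\supseteq \ZZ/3\ZZ$, so there is a $3$-to-$1$ cover of $X$, \'etale over $X^0$, which is again a del Pezzo surface with $\frac1{3}(1,1)$ points but of degree $3d$ --- already excluded. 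This is what kills $k=7$ and the top degrees for each $k$. Nothing in your proposal substitutes for these two ingredients, and without them the "finite range" you confine $(k,d)$ to is far too large for the subsequent analysis you envisage.

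Even granting $k\leq 6$, "run the directed MMP and match with the tables" is not yet a proof: one needs the analogue of theorem~\ref{thm:2}, i.e.\ a classification of what an extremal contraction can do to a surface with $\frac1{3}(1,1)$ (and induced $A_1$, $A_2$) singularities --- equivalently, the statement that the contracted $(-1)$-curve on the resolution meets at most one exceptional curve over each singular point, transversely. That statement is what makes the tree of possible MMPs finite, and the bulk of \S~\ref{sec:trees} consists of eliminating spurious branches by exhibiting extra negative curves (via the free-linear-system argument of remark~\ref{rem:1}) showing that the branch was not a \emph{directed} MMP; your proposal neither carries this out nor indicates a mechanism for it, which is precisely the "delicate point" you flag and leave open. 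A small symptom of the missing bookkeeping: you assert that $\widetilde X$ may carry $(-2)$-curves, but your own positivity computation gives $-K_{\widetilde X}\cdot C>0$ for every irreducible $C\neq E_i$, which by adjunction forces $C^2\geq -1$; $(-2)$-curves occur only on the resolutions of the \emph{intermediate} surfaces of the MMP, i.e.\ exactly in the part of the argument your sketch leaves unspecified.
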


\begin{rem}
  \label{rem:unique_surfaces}
  With the exception of families $\Bsurf{1}{16/3}$, $\Bsurf{2}{8/3}$, all the surfaces in
  theorem~\ref{thm:5} are qG-rigid: in other words, they are the only
  isomorphism class of surfaces in that family.  
\end{rem}

Theorem~\ref{thm:4} of \S~\ref{sec:trees} is a more precise version of
theorem~\ref{thm:5} just stated. In particular, the statement of
theorem~\ref{thm:4} in \S~\ref{sec:trees} has images showing the
directed MMP for $X$ that provide a birational construction of $X$, and
pictures of a distinguished configuration of negative curves in the
minimal resolution $f\colon Y \to X$.

In all cases, we could have pushed our analysis to the point where we
could have made a list of all negative curves on $Y$ and $X$, and computed
generators of the nef cones $\Nef Y$, $\Nef X$. We did not pursue this as we
don't have a compelling reason to do so.

Surfaces with a given $k$ are all obtained by a \emph{cascade}---the
terminology is due to~\cite{MR2053462}---of blow-ups of smooth points
starting with one of the surfaces in theorem~\ref{thm:5}:

\begin{cor}
  \phantomsection \label{cor:1}
 \begin{enumerate}[(1)]
 \item A surface of the family $\Xsurf{1}{d}$ is the blow-up of
   $25/3-d\leq 8$ nonsingular points on $\PP(1,1,3)$. If $d< 16/3$,
   then it is also the blow-up of a surface of the family $\Bsurf{1}{16/3}$ in
   $1\leq 16/3 -d \leq 5$ nonsingular points;
 \item A surface of the family $\Xsurf{2}{d}$ is the blow-up of
   $17/3-d \leq 5$ nonsingular points on $\Xsurf{2}{17/3}$. If
   $d<8/3$, then it is also the blow-up of a surface of the family $\Bsurf{2}{8/3}$ in
   $1\leq 8/3-d\leq 2$ nonsingular points;
  \item A surface of the family $\Xsurf{3}{d}$ is the blow-up of $5-d
    \leq 4$ nonsingular points on $\Xsurf{3}{5}$;
  \item A surface of the family $\Xsurf{4}{d}$ is the blow-up of
    $7/3-d \leq 2$ nonsingular points on $\Xsurf{4}{7/3}$;
  \item A surface of the family $\Xsurf{5}{2/3}$ is the blow-up of a
    nonsingular point on $\Xsurf{5}{5/3}$;
  \item $\Xsurf{6}{1}$ is the blow-up of a nonsingular point on
    $\Xsurf{6}{2}$.
  \end{enumerate}\qed
\end{cor}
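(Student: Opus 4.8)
The plan is to derive the corollary from Theorem~\ref{thm:5} and the minimal model program, the point being that a del Pezzo surface with $k\geq 1$ $\frac13(1,1)$ points and a floating $(-1)$-curve can always be simplified by contracting that curve. First I would record the basic step. If $C\subset X^0$ is a floating $(-1)$-curve on a surface $X$ of one of the families, then $C$ lies in the nonsingular locus, so its contraction $\phi\colon X\to X'$ is an ordinary smooth blow-down: $X'$ carries exactly the same $k$ singular points of type $\frac13(1,1)$ (the $P_i$ are untouched), and $K_X=\phi^\ast K_{X'}+C$ with $C^2=-1$ gives $K_{X'}^2=K_X^2+1$. That $X'$ is again del Pezzo is then a one-line check with Nakai--Moishezon, since $-K_{X'}\cdot D'=(-K_X+C)\cdot D=(-K_X)\cdot D+C\cdot D>0$ for the strict transform $D$ of any irreducible curve $D'\subset X'$ (using $D\neq C$ and ampleness of $-K_X$), while $-K_{X'}^2=-K_X^2+1>0$.

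Next I would iterate this step. Each contraction strictly drops the rank of $\Cl X$, which is bounded below, so after finitely many contractions we reach a del Pezzo surface $X_{\mathrm{top}}$ with $k$ $\frac13(1,1)$ points and no floating $(-1)$-curve. By Theorem~\ref{thm:5}, $X_{\mathrm{top}}$ is one of the finitely many listed surfaces with that value of $k$, and $X$ is recovered as the blow-up of $X_{\mathrm{top}}$ in $K_{X_{\mathrm{top}}}^2-d$ nonsingular points, where $d=K_X^2$. All the numerics then fall out of this count together with the degrees in Table~\ref{table:1}: for $k=1$ the only tops are $\PP(1,1,3)$ of degree $25/3$ and $\Bsurf{1}{16/3}$ of degree $16/3$, so the numbers of blow-ups are $25/3-d$ and $16/3-d$, bounded by $8$ and $5$ respectively because the minimal degree attained in the $k=1$ cascade is $1/3$; the cases $k=2,\dots,6$ are identical bookkeeping. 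Observe that for $16/3<d<25/3$ the contraction process necessarily terminates at $\PP(1,1,3)$, since it is the only top of degree exceeding $d$, so the first description is automatic in that range.

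The delicate point, and where I expect the real work to lie, is the secondary ``also'' clauses: that a single surface of $\Xsurf{1}{d}$ with $d<16/3$ (resp. of $\Xsurf{2}{d}$ with $d<8/3$) simultaneously admits a blow-down to $\PP(1,1,3)$ and to $\Bsurf{1}{16/3}$ (resp. to $\Xsurf{2}{17/3}$ and to $\Bsurf{2}{8/3}$). The output of the MMP is not unique, so a given $X$ need not contract to the top reached by an arbitrary sequence of choices; one must exhibit two genuinely different sequences of floating $(-1)$-curve contractions ending at the two distinct tops. Here I would use the explicit directed MMP and the distinguished configurations of negative curves on the minimal resolution displayed in Theorem~\ref{thm:4}, which make both sequences visible, the inequality $d<16/3$ being exactly the condition $16/3-d\geq 1$ ensuring that at least one blow-up over $\Bsurf{1}{16/3}$ is needed for this alternative to be available. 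Finally, uniqueness of the family with prescribed $(k,d)$ and $f=1$ from Theorem~\ref{thm:1} guarantees that both constructions land in the same family $\Xsurf{k}{d}$, completing the identification.
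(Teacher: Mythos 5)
Your proposal is correct and follows essentially the same route as the paper: contract floating $(-1)$-curves to reach one of the surfaces of theorem~\ref{thm:5}, read off the numerics from the degree count, and handle the delicate ``also'' clauses for $k=1,2$ by exhibiting alternative contraction sequences visible in the explicit configurations of negative curves from theorem~\ref{thm:4} --- which is precisely the content of the paper's remark~\ref{rem:cascades}. Your observation that the non-uniqueness of the MMP output is the real issue, so that two genuinely different sequences of floating $(-1)$-curve contractions must be produced, is exactly the paper's own point.
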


\begin{rem}
  \label{rem:cascades}
  In the cases $k=1$ and $k=2$, corollary~\ref{cor:1} is not an
  immediate consequence of theorem~\ref{thm:5}. Indeed, given a
  surface $X$, it is clear that a sequence of contractions of floating
  $(-1)$-curves leads to one of the surfaces listed in
  theorem~\ref{thm:5}. We need to show, in addition, that:
  \begin{enumerate}[(1)]
  \item If $X\to \Bsurf{1}{16/3}$ is the
  blow-up of a nonsingular point, there is an alternative
  sequence of $4$ blow-downs of floating $(-1)$-curves starting from
  $X$  and ending in $\PP(1,1,3)$; 
\item If $X\to \Bsurf{2}{8/3}$ is the
  blow-up of a nonsingular point, then there is an alternative
  sequence of $4$ blow-downs of floating $(-1)$-curves starting from
  $X$  and ending in the surface $\Xsurf{2}{17/3}$.
  \end{enumerate}
 These facts are easy to verify from the explicit birational
 constructions given in theorem~\ref{thm:4}.
\end{rem}

\subsubsection{Good model constructions}
\label{sec:good-model-constr}

We summarise all of the key features of the constructions provided by
table~\ref{table:1}.
 
\begin{dfn}
  \label{dfn:7}
  \begin{itemize}
  \item A \emph{rep quotient variety} is a geometric quotient $F=A/\!\!/G$
where $G$ is a complex Lie group and $A$ a representation of $G$.
\item Let $L_1,\dots,L_c$ be line bundles on $F$ constructed from characters
$\rho_i \colon G \to \Cstar$ ($i=1,\dots,c$).  A subscheme
$X\subset F$ of pure codimension $c$ is a \emph{complete intersection}
of type $(L_1,\dots, L_c)$ on $F$ if $X=Z(\sigma)$ is the zero-scheme of a
section
\[
\sigma \in H^0(F; L_1\oplus \cdots \oplus L_c)
\] 
\item Let $E_1$, $E_2$ be vector bundles on $F$ constructed from two
  representations of $G$. A subscheme $X\subset F$ is a
  \emph{degeneracy locus} on $F$ if $X$ is the locus where a vector
  bundle homomorphism $s\colon E_1\to E_2$ drops rank, provided that
  this locus has the expected codimension.
  \end{itemize}
\end{dfn}

Examples of rep toric varieties are toric varieties, where $G$ is a torus, but also the
weighted Grassmannians of \cite{MR1954062}.

For all of the 26 families $\Xsurf{k}{d}$ of theorem~\ref{thm:1}, we
list in table~\ref{table:1} a \emph{good model construction} of a
surface of the family as a degeneracy locus in a rep quotient
variety. 

For all but one pair $(k,d)$, table~\ref{table:1} shows a rep
quotient variety $F$ and line bundles $L_1,\dots, L_c$ with the
following properties:
\begin{enumerate}[(a)]
\item the line bundles $L_i$ are nef on $F$,
\item $-K_F-\Lambda$ is ample on $F$,\footnote{By construction the
    line bundle $-K_F-\Lambda$ is $G$-linearised, and this uniquely
    specifies the GIT problem of which $F$ is the
    solution. Table~\ref{table:1} also gives a complete description of
    the cone $\Nef F$ of stability conditions.} where
  $\Lambda=\sum_{i=1}^c L_i$,
\end{enumerate}
such that a general complete intersection $X$ of type
$(L_1,\dots, L_c)$ on $F$ is a surface of the family\footnote{In fact, one can
  verify, a general surface of the family. We do not claim, however,
  that every surface of the family has such a description. This may
  be true, and it is definitely true for rigid surfaces, but we did
  not check it in general.} $\Xsurf{k}{d}$. 

Since, by the adjunction formula, $-K_X=-(K_F+\Lambda)_{|X}$ is ample,
the constructions make it manifest that $X$ is a Fano variety.  In all
cases it is easy to verify that a general complete intersection of
type $(L_1,\dots,L_c)$ on $F$ has $k$ $\frac1{3}(1,1)$ singularities,
and compute $k$ and the anticanonical degree $d=K_X^2$.

In 24 out of 25 cases $F$ is a toric variety. In 23 of the 24 cases,
$F$ is a well-formed simplicial toric variety, and the surface itself
is quasi-smooth and well-formed. (These notions are recalled in
\S~\ref{sec:well-form-simpl}.)

\S~\ref{sec:constructions} summarises conventions and facts about
toric varieties and gives model computations demonstrating all of
these statements.

\begin{rem}
  \label{rem:4}
  Our model construction are not unique. In many cases, several
  similar constructions exist. It would be nice to understand all
  model constructions systematically.
\end{rem}

We say a few words about the three exceptions:

\paragraph{Family $\Xsurf{1}{7/3}$}

In \S~\ref{sec:model-wg-2} we describe a weighted Grassmannian
$F=\wG(2,5)$ and a complete intersection $X$ in it which is a surface in
this family. $F$ is a well-formed orbifold, and $X$ is quasi-smooth
and well-formed.

\paragraph{The surface $\Xsurf{5}{5/3}$}

This family is qG-rigid and it consists of a single surface. This
surface has a simple birational construction: blow up the vertices of
a pentagon of $5$ lines on a smooth del Pezzo surface of degree $5$,
and blow down the strict transforms of the $5$ lines. In
\S~\ref{sec:x_5-53} we construct a model of this surface as
codimension~3 degeneracy locus of an antisymmetric vector bundle
homomorphism $s\colon E\otimes L \to E^\vee$ where $E$ is a rank~5
split vector bundle on a simplicial toric 5-fold.

\paragraph{Family $\Xsurf{5}{2/3}$}

In \S~\ref{sec:x_5-23} we describe a non-simplicial toric variety $F$
and a complete intersection $X$ in it which is a surface in this
family. We verify that $X$ misses the non-orbifold locus of
$F$. Outside of this locus $F$ is a well-formed orbifold and $X$ is
quasi-smooth and well-formed. We did not succeed in finding a good
model construction for a surface in this family in a simplicial toric
variety. Such a construction may well exist but it it very difficult
computationally to look for it, particularly since this family does
not admit a toric degeneration. There are, in fact, two difficulties:
the software does not exist, and the computations are very large.

\subsubsection{Toric qG-degenerations}
\label{sec:toric-qg-degen}

In \S~\ref{sec:toric-degenerations} we prove the following:

\begin{thm}
  \label{thm:6}
  With the exception of $\Xsurf{4}{1/3}$, $\Xsurf{5}{2/3}$ and
  $\Xsurf{6}{1}$ (all of which have $h^0(X,-K_X)=0$), all other
  families admit a qG-degeneration to a toric surface.
\end{thm}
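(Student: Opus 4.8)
The plan is to prove Theorem~\ref{thm:6} family-by-family, exhibiting for each of the $26$ remaining families an explicit toric surface $X_0$ together with a one-parameter qG-degeneration $\mathcal{X}\to \mathbb{A}^1$ whose general fibre is a surface of the family and whose central fibre is $X_0$. Since the singularity $\frac1{3}(1,1)$ is qG-rigid and the degree $d=K^2$ is locally constant in qG-families, it suffices for each target $X_0$ to check that it is a del Pezzo \emph{toric} surface with exactly $k$ quotient singularities of type $\frac1{3}(1,1)$ and with $K_{X_0}^2=d$, and then to produce the degenerating family realising it. The three excluded families $\Xsurf{4}{1/3}$, $\Xsurf{5}{2/3}$, $\Xsurf{6}{1}$ all have $h^0(X,-K_X)=0$, which is exactly the obstruction: with no anticanonical sections there is no Gorenstein-type toric pencil to deform into, so these are genuinely expected to fail and are simply set aside.

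First I would extract the candidate toric central fibres directly from the good model constructions of Theorem~\ref{thm:1} recorded in table~\ref{table:1}. In $24$ of the $25$ good-model cases the ambient rep quotient variety $F$ is already toric, and in $23$ of those it is well-formed and simplicial with $X$ quasi-smooth and well-formed; for these the natural strategy is to degenerate the defining section $\sigma\in H^0(F;L_1\oplus\cdots\oplus L_c)$ to a \emph{monomial} (binomial) section, whose zero locus is a toric subvariety of $F$ and hence itself toric. Concretely, a general complete intersection $X=Z(\sigma)$ degenerates within the linear system $|L_1|\times\cdots\times|L_c|$ to a binomial complete intersection $X_0$; choosing the monomials compatibly with the torus action makes $X_0$ toric, and because the $L_i$ are nef and $-K_F-\Lambda$ is ample, adjunction keeps $-K_{X_0}$ ample. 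The key verification in each case is that the chosen binomial $X_0$ is well-formed, has the correct singularity profile (exactly $k$ points of type $\frac1{3}(1,1)$, no worse singularities introduced at the coordinate strata), and preserves $K^2=d$; this is a finite, per-family toric computation using the combinatorics recalled in \S~\ref{sec:constructions}.

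For the handful of families falling outside this clean pattern I would argue separately. The family $\Xsurf{1}{7/3}$ is a complete intersection in a weighted Grassmannian $\wG(2,5)$ rather than a toric variety; here the approach is to degenerate the weighted Grassmannian itself to its toric limit (the Gröbner/SAGBI degeneration of the Plücker-embedded Grassmannian to the Gelfand--Tsetlin toric variety, in its weighted incarnation) and then carry the complete intersection along to a toric central fibre, checking as before that the singularities and degree are preserved. The surface $\Xsurf{5}{5/3}$, built as a codimension~$3$ antisymmetric (Pfaffian) degeneracy locus $s\colon E\otimes L\to E^\vee$ over a simplicial toric $5$-fold, requires degenerating the antisymmetric homomorphism $s$ to a torus-fixed, monomial antisymmetric form; the resulting Pfaffian locus is toric, and one verifies it lands in the correct family by the same invariants. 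The alternative purely birational description of $\Xsurf{5}{5/3}$ (blow up the five vertices of a pentagon of lines on a degree~$5$ del Pezzo and contract the five lines) gives an independent route, since the standard toric degeneration of $\mathbb{P}^2$ blown up at torus-fixed configurations can be arranged to specialise this construction torically.

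The main obstacle I expect is not the existence of a degeneration in any single case but the uniformity and bookkeeping across all $26$ families: one must guarantee in every case that the chosen toric central fibre acquires \emph{precisely} the $k$ singular points of type $\frac1{3}(1,1)$ and no extraneous singularities that would change the qG-deformation class or the degree, and that the total space $\mathcal{X}$ is genuinely qG (i.e.\ $K_{\mathcal{X}}$ is $\mathbb{Q}$-Cartier so that the $\frac1{3}(1,1)$ points deform flatly in the Gorenstein sense). For the simplicial-toric good models this reduces to checking that the binomial degeneration keeps the section transverse to the singular strata of $F$ in codimension consistent with quasi-smoothness, which is mechanical; the delicate cases are again $\wG(2,5)$ and the Pfaffian model, where transversality of the degenerate section to the ambient singular locus must be checked by hand. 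I would organise the write-up as a single table of central fibres with a short per-family justification, relegating the uniform binomial-degeneration argument to \S~\ref{sec:toric-degenerations} and treating $\Xsurf{1}{7/3}$ and $\Xsurf{5}{5/3}$ as the two worked exceptions.
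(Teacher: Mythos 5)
There is a genuine gap, and it is in the very first reduction. You claim that ``it suffices for each target $X_0$ to check that it is a del Pezzo toric surface with exactly $k$ quotient singularities of type $\frac1{3}(1,1)$ and with $K_{X_0}^2=d$,'' and accordingly your per-family verification demands that the binomial central fibre have ``exactly $k$ points of type $\frac1{3}(1,1)$, no worse singularities.'' This criterion is too strong, and it is exactly backwards from what a qG-degeneration is: the central fibre may, and in general \emph{must}, acquire additional singularities of class $T$, which smooth out in the qG-family while only the $\frac1{3}(1,1)$ residues persist. A toric $X_0$ satisfying your criterion would itself be locally qG-rigid with the invariants of the family, i.e.\ a toric \emph{member} of the family --- so your strategy can only ever succeed for families that contain a toric surface. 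Many of the $26$ families contain none: $\Xsurf{5}{5/3}$ is qG-rigid, consists of a single non-toric surface (its toric degeneration, polygon $7$ of the paper, carries two extra $\frac1{4}(1,1)$ points), and the same failure occurs for $\Bsurf{1}{16/3}$, $\Bsurf{2}{8/3}$, $\Xsurf{1}{1/3}$, and others. Concretely, the binomial degenerations you propose really do pick up extra singularities: the binomial quartic $yx_0-x_1^4=0$ in $\PP(1,1,1,3)$, which is the paper's toric degeneration of $\Bsurf{1}{16/3}$, is $\PP(1,3,4)$ and has an $A_3$ point besides the $\frac1{3}(1,1)$ point. So your verification step would ``fail'' in almost every case, and the proof would stall. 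The missing ingredient is the notion of \emph{singularity content} of \cite{AK14}: the correct condition on $X_0$ is that every singular point is either of class $T$ or has $R$-content $\frac1{3}(1,1)$, with exactly $k$ nontrivial residues; combined with the unobstructedness of qG-deformations of del Pezzo surfaces (\cite[Lemma~6]{surfaces15}), this is what lets one conclude that $X_0$ qG-deforms to a locally qG-rigid surface with $k$ $\frac1{3}(1,1)$ points and the same degree.

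This is in fact how the paper argues, and in the opposite direction to yours: it starts from $26$ explicit Fano polygons with singularity content $(n,\{k\times\frac1{3}(1,1)\})$ (found by computer search), applies unobstructedness to deform each $X_P$ outward to a member of a family, and only then faces the one real subtlety, which your direction would have handled for free: since $(k,d)$ does not determine the family for the pairs $\{\Bsurf{1}{16/3},\Xsurf{1}{16/3}\}$ and $\{\Bsurf{2}{8/3},\Xsurf{2}{8/3}\}$, one must decide which polygon deforms to which family. It is only here that the paper uses your binomial idea, writing explicit binomial hypersurface models of $X_{P12}$, $X_{P13}$, $X_{P21}$, $X_{P22}$ inside the good-model ambients as an identification device, not as the existence engine. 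Two further points: your proposed SAGBI degeneration of $\wG(2,5)$ and monomial Pfaffian degeneration for $\Xsurf{5}{5/3}$ are far heavier machinery than needed (the polygons dispose of these cases uniformly); and for the three excluded families you only say they are ``expected to fail and are simply set aside,'' whereas the paper actually proves non-existence of a toric qG-degeneration from the qG-invariance of $h^0(X,-K_X)$ together with the fact that $|-K_{X_0}|$ of a toric surface contains the boundary divisor --- a one-line argument you should make explicit rather than wave at.
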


Table~\ref{tab:third_one_one_polygons} and
figure~\ref{fig:third_one_one_polygons} list $26$ lattice
polygons $P$ such that their face-fans $\Sigma(P)$ give toric surfaces
$X_P$ that are qG-degenerations of the families in
theorem~\ref{thm:6}. 

\subsubsection{Comments on the literature and on our proofs}
\label{sec:comments-literature-}

The cascade for the surfaces with $k=1$ was discovered by Reid and
Suzuki in \cite{MR2053462}.

Del Pezzo surfaces with quotient singularities, also known as log del
Pezzo surfaces, are studied in \cite{MR2227002, MR1039966, MR1009466,
  MR993456, MR1787240, MR972094, MR2372472}. 

In two remarkable papers, De-Qi Zhang \cite{MR957874, MR1007096}
classifies log del Pezzo surfaces of Picard rank~1 closely related to
our surfaces and outlines a general strategy to classify all rank~1 log del
Pezzo surfaces. DongSeon Hwang has recently announced a complete
classification of rank~1 log del Pezzo surfaces. 

While we were working on this project, paper \cite{FY14} appeared,
containing a classification of log del Pezzo surfaces with Gorenstein
index $3$.  While we do not classify all log del Pezzo surfaces of
Gorenstein index~3, in some other respect we classify more surfaces
then \cite{FY14}. Indeed, the discussion in \S~\ref{sec:context} shows
that a del Pezzo surfaces with singularities as example~\ref{exa:1}(a)
and~(b) qG-deforms to one of our surfaces. Our classification is done
by similar methods: we determine all possibilities for the directed
MMP of the surface by a detailed combinatorial study of the
configuration of negative curves on the minimal resolution. In our
view, our proof is simpler than that in \cite{FY14}. The cascade
structure, our good model constructions, and the statement about toric
degenerations, are new.

It took us a significant effort to find the good model
constructions. For many of the families, it is comparatively easy to
find a construction of a general surface as a complete intersection in
a toric variety, but very hard to find one that satisfies properties
(a) and (b) of \S~\ref{sec:good-model-constr}. Initially, we
discovered some constructions by hand using birational geometry; then,
we found more with the help of a computer search; finally, we learned
a systematic way \cite{coates:_lauren}. To determine which of the
families admit a toric qG-degeneration we made a computer search for
the relevant Fano lattice polygons. A unified picture comprising both
these facts---the toric complete intersection model and the toric
degeneration---would be desirable. The paper \cite{coates:_lauren}
contains the beginning of such a theory. 

At this time we can not imagine a geometric explanation for the fact
that some del Pezzo surfaces do, and some do not, admit a
qG-degeneration to a toric surface.

\subsection{Context}
\label{sec:context}

We put our results in the context of a general program to understand
mirror symmetry for orbifold del Pezzo surfaces \cite{surfaces15,
  KNP14, OP14,
  tveiten14:_maxim_lauren,prince:_smoot_toric_fano_surfac_using,
  coates:_lauren} and answer the question: Why classify del Pezzo
surfaces with $\frac1{3}(1,1)$ points?

qG-deformations of surface singularities is a technical notion
that ensures that the canonical class is well-behaved in families: in
particular, $K^2$ is locally constant in a qG-family of proper
surfaces. (The Gorenstein index, crucially, is not locally constant in
a qG-family.) Thus, it is natural to restrict our attention to qG-deformations.

\begin{dfn}
  \label{dfn:6}
  \begin{itemize}
  \item A cyclic quotient surface singularity is \emph{class T} if it
    has a qG-smoothing, cf.\ \cite[Definition~3.7]{MR922803};
  \item A cyclic quotient surface singularity is \emph{class R} if it
    is qG-rigid, cf.\ \cite{surfaces15, AK14};
\item A del Pezzo surface $X$ with cyclic quotient singularities is
  \emph{locally qG-rigid} if $X$ has singularities of class $R$ only.
  \end{itemize}
\end{dfn}

It is well-known that a surface cyclic quotient singularity
$(x\in X)\cong \frac1{n}(1,q)$ has a unique qG-deformation component
and that the general surface of the miniversal family has a unique
singularity of class $R$, the \emph{$R$-content} of $(x\in X)$, cf.\
\cite{surfaces15} and \cite[Definition~2.4]{AK14} and the discussion
following it. 

\begin{exa}
  \phantomsection \label{exa:1}
  \begin{enumerate}[(a)]
  \item A singularity is of class $T$ if and only if it is of the form
    $\frac1{r^2w}(1,rma-1)$.
  \item The singularity $\frac1{3}(1,1)$ is qG-rigid. A singularity has $R$-content
$\frac1{3}(1,1)$ if and only if it is of the form $\frac1{3(3m+1)}\bigl(1,
2(3m+1)-1\bigr)$.
  \end{enumerate}
\end{exa}

It is known \cite[Lemma~6]{surfaces15} that, if $X$ is a del Pezzo
surface with cyclic quotient singularities $x_i\in X$, the natural
transformation of qG-deformation functors:
\[
\Defo^{\text{qG}} X \to \prod \Defo^{\text{qG}} (X,x_i)
\]
is smooth: a choice for each $i$ of a local qG-deformation of the
singularity $(X,x_i)$ can always be globalised to a qG-deformation of
$X$. In other words $X$ can be qG-deformed to a surface that has only
the residues of the $(X,x_i)$ as singularities. 

Our point of view here is that, when we classify del Pezzo surfaces,
and study mirror symmetry for them, it is natural to classify first
the locally qG-rigid ones, for these are the generic surfaces that we
are most likely to meet, and study their qG-degenerations as a second
step. (Note that the algebraic stack of qG-families of orbifold del
Pezzo surfaces $X$ with fixed $K_X^2$ is almost always unbounded; for
example it is unbounded when $X=\PP^2$, see for instance
\cite{MR1116920}.) This study is motivated by the fact that
$\frac1{3}(1,1)$ is the simplest class $R$ singularity.

In \cite{surfaces15} mirror symmetry for a locally qG-rigid del Pezzo
surface is stated in terms of a qG-degeneration to a toric surface:
thus it is crucial for us to determine which families admit such a
degeneration. The mirror symmetry conjecture~B of \cite{surfaces15}
computes the quantum orbifold cohomology of a locally qG-rigid surface
$X$ from data attached to the toric qG-degeneration. In order to
compute the quantum orbifold cohomology of a surface $X$ by the known
technology of abelian/nonabelian correspondence and quantum Lefschetz
\cite{MR2367022, MR2276766, MR2578300}, and thus give evidence for
conjecture~B of \cite{surfaces15}, we need a model of $X$ as a
complete intersection in a rep quotient variety. In this context, we
need conditions (a) and (b) of \S~\ref{sec:good-model-constr} to
control the asymptotics of certain $I$-functions, and this motivates
our constructions here. Conditions (a) and (b) are of course also
natural from a purely classification-theoretic
perspective. Paper~\cite{OP14} computes (part of) the quantum orbifold
cohomology of our surfaces.

Part of our motivation in undertaking this classification was to ask
seriously how general mirror symmetry is. Out of the 29 families that
comprise our classification, 26 admit a toric qG-degeneration and
\cite{surfaces15} provides a mirror construction for them. (A mirror
for these surfaces can also, in principle, be constructed by means of
the Gross--Siebert program, see
\cite{prince:_smoot_toric_fano_surfac_using}.) What about the
remaining three families $\Xsurf{4}{1/3}$, $\Xsurf{5}{2/3}$ and
$\Xsurf{6}{1}$?  It turns out that we can construct surfaces in these
families as complete intersections in toric varieties, thus these
families also have mirrors, by the Hori--Vafa construction.

\subsection{Structure of the paper}
\label{sec:structure-paper}

The paper is structured as follows. Section~\ref{sec:tables} contains
the tables that summarise the classification, with instructions
on how to read them. We also introduce several invariants and explain
how to compute some that are not shown in the tables. In particular,
denoting by $X^0=X^{\text{nonsing}}=X\setminus \Sing X$ the
nonsingular locus of $X$, proposition~\ref{pro:3}(b) of
\S~\ref{sec:fano-index-pi_1x0} states that $\pi_1(X^0)=(0)$ for all
families except $\Xsurf{6}{1}$ and $\Xsurf{6}{2}$, for which
$\pi_1(X^0)=\ZZ/3\ZZ$ .

Section~\ref{sec:constructions} summarises some facts from toric
geometry needed to validate the tables: in particular, we give the
notion of quasi-smooth and well-formed complete intersections in a
well-formed simplicial toric variety, and sample computations
verifying that the constructions of table~\ref{table:1} really
construct what they say they do. Sections~\ref{sec:x_5-53}
and~\ref{sec:x_5-23} are extended essay on models of surfaces
$\Xsurf{5}{5/3}$ and $\Xsurf{5}{2/3}$, and \S~\ref{sec:birat-constr-4}
collects special birational constructions in some cases.

Section~\ref{sec:invariants} studies some of the basic invariants
introduced in \S~\ref{sec:invariants_tiny} and uses elementary lattice
theory and covering space theory to obtain almost optimal bounds for
them that we use later on to cut down the number of cases that we need
to consider in the proof of theorems~\ref{thm:5} and~\ref{thm:4}. This
material is not strictly necessary for the proof of
theorems~\ref{thm:5} and~\ref{thm:4} but it does simplify it. Part of
our reason to include it here is that the use of lattice theory and
elementary covering space theory is very effective and we think it may
have applications in other problems of classification of orbifold del
Pezzo surfaces.

Section~\ref{sec:mmp} summarises all that we need from the
Minimal Model Program in the proof of theorems~\ref{thm:5}
and~\ref{thm:4} and introduces the directed MMP that we use to
organise the combinatorics of the proof. 

Section~\ref{sec:trees} is the heart of the paper. We prove theorems~\ref{thm:5}
and~\ref{thm:4}. In particular, the statement of theorem~\ref{thm:4}
has images showing the directed MMP for all the surfaces that provide
birational constructions of them, and pictures of a distinguished
configuration of negative curves in the minimal resolution.

In the final \S~\ref{sec:toric-degenerations} we prove
theorem~\ref{thm:6}. We list and picture $26$ lattice polygons and we
show that the corresponding toric surfaces are qG-degenerations of
the families of theorem~\ref{thm:6}.

We refer to the short summary at the beginning of each section for
more detailed information on the structure and content of that section.

\subsection{Acknowledgments}
\label{sec:acknowledgements}

This work started as one of a number of interconnected projects at the
PRAGMATIC 2013 Research School in Algebraic Geometry and Commutative
Algebra ``Topics in Higher Dimensional Algebraic Geometry'' held in
Catania in September 2013. The work continued at the EMS School ``New
Perspectives on the classification of Fano Manifolds'' held in Udine
in September 2014. We are very grateful to Alfio Ragusa, Francesco
Russo, and Giuseppe Zappal\`a, the organisers of the PRAGMATIC school,
and to Pietro De Poi and Francesco Zucconi, the organisers of the EMS
school, for creating a wonderful atmosphere for us to work in.

We thank Alexander Kasprzyk, who did the computer programming needed
to search for the models of the surfaces as complete intersections in
simplicial toric varieties and the Fano lattice polygons that give the
toric degenerations.

We also thank Mohammad Akhtar, Gavin Brown, Tom Coates, Alessandro
Oneto, Andrea Petracci, Thomas Prince, Miles Reid and Ketil Tveiten
for many helpful conversations.


\section{Tables}
\label{sec:tables}


Tables~\ref{table:2} and~\ref{table:1} summarise the classification,
provide model constructions for a surface in each of the families,
and display some of their numerical invariants. In
\S~\ref{sec:invariants_tiny} we introduce several invariants and state
some of the relations that hold between them. We explain how to
compute some of the invariants that are not displayed in the table
from those that are. In \S~\ref{sec:tables-1} we explain how to read
the tables. In \S~\ref{sec:fano-index-pi_1x0} we discuss the Fano
index and $\pi_1(X^0)$ where $X$ is one of our surfaces and
$X^0=X^\text{nonsing}=X\setminus \Sing X$ is the nonsingular
locus. Proposition~\ref{pro:3} states that: (a) the Fano index of the
families of theorem~\ref{thm:3} really is as claimed and (b)
$\pi_1(X^0)=\ZZ/3\ZZ$ for families $\Xsurf{6}{2}$ and $\Xsurf{6}{1}$,
and it is trivial for all other families.

\subsection{Invariants}
\label{sec:invariants_tiny}

Here $X$ is a surface of one of the $29$ families of Del
Pezzo surfaces with $k\geq 1$ $\frac1{3}(1,1)$ points, and
$X^0=X^\text{nonsing}= X\setminus \Sing X$ is the nonsingular locus of
$X$. We are interested in the following invariants. 

\begin{enumerate}[(i)]
\item $k$, the number of singular points of $X$;
\item $K^2=K_X^2$, the anticanonical \emph{degree} of $X$. It is obvious that $K_X^2>0$
  and $K_X^2\equiv \frac{k}{3} \pmod{\ZZ}$;
\item $h^0(X, -K_X)$, an integer $\geq 0$ and, more generally,
  $h^0(X,-nK_X)$ for all integers $n\geq 0$;
\item $r=\rho(Y)=\rho(X)+k$, the Picard rank of the minimal resolution $f\colon Y
  \to X$;
\item $n=e(X^0)=\widehat{c}_2(X)-k/3=2+\rho(X)-k$, where $e$ is the
  (homological) topological Euler number and
  $\widehat{c}_2(X)=c_2(\widehat{T}_X)$ is the orbifold second Chern class
  of $X$;
\item $\sigma$, the \emph{defect} of $X$, defined as follows: let
  $L=H^2(Y;\ZZ)$, viewed as a unimodular lattice by means of the
  intersection form, let $N=\langle -3\rangle^{\perp \,k} \subset L$ be the
  sublattice spanned by the $(-3)$-curves, and let
  $\overline{N}=\{\bv\in L \mid \exists d\in \ZZ \; \text{with} \;
  d\bv \in N \}$
  be the saturation of $N$ in $L$, then, for some integer $\sigma>0$,
  $\overline{N}/N\cong \FF_3^\sigma$.  Indeed, note that
  $\overline{N}/N\subset N^\ast/N$ where $N^\ast=\Hom (N,\ZZ)$ and
  $N\subset N^\ast$ the natural inclusion given by the quadratic
  form. Note that $N^\ast/N$ is $3$-torsion and isomorphic to $(\ZZ/3\ZZ)^k$, so
  $\overline{N}/N$ is also $3$-torsion. Equivalently,
  $\sigma=k-\rk \Image [N\otimes \FF_3\to L\otimes \FF_3]$. We prove
  in lemma~\ref{lem:3} below that $\FF_3^\sigma \cong H_1(X^0;\ZZ)$;
\item The number of moduli, that is, the dimension $\dim \mathfrak{M}$
  of the moduli stack. This number is
  $-\chi(X,\Theta_X)=h^1(X,\Theta_X)-h^0(X,\Theta_X)$ (it is
  well-known that $H^2(X,\Theta_X)=(0)$, see for example
  \cite{surfaces15}). By the Riemann--Roch theorem it is a topological
  invariant constant on qG-families;
\item The \emph{Fano index} $f$ of $X$, defined as follows: $f$ is the
  largest integer such that $-K_X=f A$ in $\Cl X$, for some divisor
  class $A\in \Cl X$;
\item The fundamental group $\pi_1(X^0)$.
\end{enumerate}

\begin{rem}
  \label{rem:6}
  The Riemann-Roch \cite[\S~3]{MR927963} and Noether formula state:
\[
h^0(X, -K_X)=1+K_X^2-\frac{k}{3}\quad
\text{and}
\quad
K_X^2=12-n-\frac{5k}{3}
\]
so one can compute $h^0(X,-K_X)$, $n$ and $r$ from $k$ and $K_X^2$
(vanishing implies that $h^0(X,-nK_X)=\chi (X,-nK_X)$ for $n\geq 0$).

It is easy from these data to compute the Poincar\'e series
$P_X(t)=\sum_{n\geq 0} t^nh^0(X,-nK_X)$. We state the result even
though it is not logically needed anywhere in the paper:
\[
P_X(t)=\frac{1+\bigl(K_X^2-1-\frac{k}{3}\bigr)t+\bigl(K_X^2+\frac{2k}{3}\bigr)t^2+\bigl(K_X^2-1-\frac{k}{3}\bigr)t^3+t^4}{(1-t)^2(1-t^3)}
\]
\end{rem}

\begin{rem}
  \label{rem:7}
  \begin{itemize}
  \item If $X$ admits a toric qG-degeneration, then $n=e(X^0)\geq
    0$. Indeed, in this case $n$ is the number of $T$-cones of the Fano
    polygon corresponding to the toric degenerate surface,
    see~\cite{AK14}.
  \item If $X$ admits a toric qG-degeneration, then $h^0(X,-K_X)\geq
    1$. Indeed, by the Riemann--Roch formula, $h^0(X,-K_X)$ is
    constant on a qG family and if $X_0$ is toric then $H^0(X_0,
    -K_{X_0})\neq (0)$ since it contains at least the toric boundary
    divisor of $X_0$.
  \item It follows \cite[Chapter~10]{MR1225842} from the generic
    semi-positivity of $\widehat{T}_X$ \cite[1.8~Corollary]{MR1610249} that
    $\widehat{c}_2(X)\geq 0$.
  \end{itemize}
\end{rem}

\begin{rem}
  \label{rem:5}
  Table~\ref{table:2} and table~\ref{table:1} show the invariants $k$,
  $d$, $h^0(X,-K_X)$, $r$ and $\dim\mathfrak{M}$. As we just
  explained, the invariants $\rho(X)$, $n$, $\widehat{c}_2(X)$, and
  all the $h^0(X,-nK_X)$ are easily computed from these.

  The tables do not show the defect $\sigma$ and
  $\pi_1(X^0)$. Proposition~\ref{pro:3} computes $\pi_1(X^0)$ and
  this, together with lemma~\ref{lem:3}, determines $\sigma$:
  $\Xsurf{6}{2}$ and $\Xsurf{6}{1}$ have $\pi_1(X^0)=\ZZ/3\ZZ$ (hence
  $\sigma=1)$, and all other families have $\pi_1(X^0)=(0)$ (hence
  $\sigma=0$).
\end{rem}

\subsection{Tables}
\label{sec:tables-1}

Tables~\ref{table:2} and~\ref{table:1} summarise the classification
and provide constructions for a general surface in each family. We
explain how to read the tables. We focus on table~\ref{table:1} since
table~\ref{table:2} is a straightforward illustration of
theorem~\ref{thm:3}.

The symbol $\Xsurf{k}{d}$ in the first column of table~\ref{table:1}
signifies the family of surfaces $X$ with $k$ singular points, degree
$K_X^2=d$ and $f=1$: the next three columns display the invariants
$h^0(X, -K_X)$, the rank $r=\rk H^2(Y;\ZZ)=\rho(Y)=k+\rho(X)$ where
$Y$ is the minimal resolution of $X$, and the dimension of the family.

The next column, in all but one case, shows a rep quotient variety
$F$ and line bundles $L_1, \dots, L_c$ on $F$ such that a general
complete intersection of type $(L_1,\dots,L_c)$ on $F$ is a surface of
the family. The last column computes the cone $\Nef F$: this
information is necessary to verify that conditions (a) and (b) of
\S~\ref{sec:good-model-constr} hold: the $L_i\in \Nef F$ and
$-K_F-\Lambda \in \Amp F$. We explain in more detail how to read the
information in these last two columns.

In all cases except $\Xsurf{1}{7/3}$, $\Xsurf{5}{5/3}$ and
$\Xsurf{5}{2/3}$, $F$ is a well-formed simplicial toric variety and
$X$ is a quasi-smooth and well-formed complete intersection of nef
line bundles $L_i$. All these notions are recalled in
\S~\ref{sec:well-form-simpl} below. Families $\Xsurf{6}{2}$ and
$\Xsurf{6}{1}$ are slightly anomalous: the simplicial toric ambient
variety $F$ is not in a direct way a rep quotient variety by a torus,
but by a product of a torus and a finite group. We discuss these two
families in greater detail in \S~\ref{sec:xsurf62} and
\S~\ref{sec:xsurf61} below.

In all cases, because $-K_F-\Lambda$ is Fano, the canonically
linearised line bundle $-K_F-\Lambda$ is a stability condition that
unambiguously specifies $F$ as a GIT quotient: see
\S~\ref{sec:well-form-simpl} for more details on this.

\subsubsection{The typical entry}
\label{sec:typical-entry}

All cases except $\Xsurf{1}{7/3}$, $\Xsurf{5}{5/3}$, $\Xsurf{5}{2/3}$,
$\Xsurf{6}{2}$ and $\Xsurf{6}{1}$ are typical. In a typical case, the
table gives the weight matrix of an action of $\CC^{\times\,l}$ on
$\CC^m$ such that $F=\CC^m/\!/(\CC^{\times\, l})$ and, to the right of
this and separated by a vertical line, a sequence of column vectors
representing the line bundles $L_i$.

\paragraph{A typical entry}
\label{sec:typical-entry-1}

For example, the entry for $\Xsurf{4}{4/3}$ shows that an example of a
surface $X$ with $k=4$ singularities and $K^2=\frac{4}{3}$ can be
constructed as a complete intersection of two general sections of the
line bundles $L_1=(2,4)$ and $L_2= (4,2)$ in the Fano simplicial toric
variety $F$ given by weight matrix:
\[
\begin{array}{cccccc}
x_0 & x_1 & x_2 & x_3 & x_4 & x_5  \\
\hline
1 & 2 & 2 & 1 & 1 & 0  \\
0 & 1 & 1 & 2 & 2 & 1 
\end{array}
\]
and $\Nef F=\langle (2,1),(1,2)\rangle$ (the notation is explained
fully in \S~\ref{sec:well-form-simpl} below). Here $\Lambda=L_1+L_2
\sim (6,6)$, $-(K_F+\Lambda)\sim (1,1)$ 
and $-K_F\sim (7,7)$ are all ample. 

\subsubsection{Model for $\Xsurf{1}{7/3}$ in $F=\wG(2,5)$}
\label{sec:model-wg-2}

We refer the reader to \cite{MR1954062} for the definition of weighted
Grassmannians, as well as notation for complete intersections in
them. Consider, as in \cite{MR1954062}, $F=\wG (2,5)$ with weights
$\bigl(\frac1{2}, \frac1{2}, \frac1{2}, \frac1{3},\frac1{3}\bigr)$:
then $F\subset \PP(1^3,2^6,3)$ and it is easy to see by the methods of
\cite{MR1954062} that the vanishing locus $X=Z(s)$ of a general
section $s\in \Gamma (F, \oo_F (2)^{\oplus 4})$ is a surface of this
family.

\subsubsection{Model for $\Xsurf{5}{5/3}$}
\label{sec:model-xsurf553}

This family is qG-rigid and it consists of a single surface. This
surface has a simple birational construction: blow up the vertices of
a pentagon of $5$ lines on a nonsingular del Pezzo surface of degree
$5$, and blow down the strict transforms of the $5$ lines. In
\S~\ref{sec:x_5-53} we construct a model of this surface as
codimension~3 degeneracy locus of an antisymmetric vector bundle
homomorphism $s\colon E\otimes L \to E^\vee$ where $E$ is a rank~5
split vector bundle on a simplicial toric 5-fold.

\subsubsection{Model for $\Xsurf{5}{2/3}$}
\label{sec:model-xsurf523}

Section~\ref{sec:x_5-23} is an extended essay on this case.

\subsubsection{Model for $\Xsurf{6}{2}$}
\label{sec:xsurf62}

This is the toric surface whose fan is the spanning fan of the lattice polygon
in Figure~\ref{fig:X62}:
\begin{figure}[H]
  \caption{The polygon of $\Xsurf{6}{2}$ \protect{\label{fig:X62}}}
  \centering
  \resizebox{4cm}{!}{\includegraphics[width=0.8\textwidth]{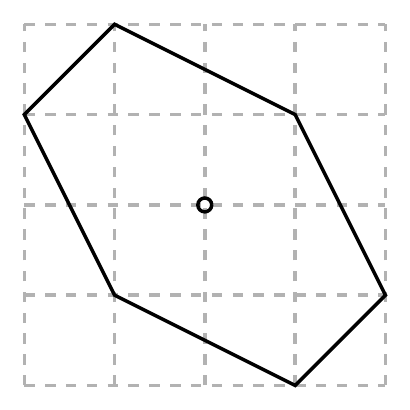}}
\end{figure}
\noindent where the origin of the lattice is labelled with a $\circ$. It is
clear from the picture that the primitive generators $\rho_1,\dots, \rho_6$ of the rays of
the fan generate a sublattice of index $3$. This fact easily implies
that $\pi_1(X^0)=\ZZ/3\ZZ$. 

\subsubsection{Model for $\Xsurf{6}{1}$}
\label{sec:xsurf61}

Every surface in this family is the quotient of a nonsingular cubic surface by a
$\Bmu_3$-action as shown by table~\ref{table:1}. This fact immediately
implies that $\pi_1(X^0)=\ZZ/3\ZZ$ for a surface in this family.

\subsection{The Fano index and \texorpdfstring{$\pi_1(X^0)$}{pi1}}
\label{sec:fano-index-pi_1x0}

\begin{pro}
  \phantomsection \label{pro:3}
  \begin{enumerate}[(a)]
  \item $\PP(1,1,3)$ has Fano index $f=5$. The surface
    $\Bsurf{1}{16/3}$ and every surface of the family
    $\Bsurf{2}{8/3}$ have Fano index $f=2$. All other surfaces with
    $k\geq 1$ $\frac1{3}(1,1)$ points have Fano index $f=1$.
  \item If $X=\Xsurf{6}{2}$ or a surface of the family $\Xsurf{6}{1}$,
    then $\pi_1(X^0)=\ZZ/3\ZZ$. If $X$ is any other surface with
    $k\geq 1$ $\frac1{3}(1,1)$ points, then $\pi_1(X^0)=(0)$.
  \end{enumerate}
\end{pro}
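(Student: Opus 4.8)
Proposal for proposition~\ref{pro:3}.

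\paragraph{Part (a).}
The plan is to combine two numerical constraints on the Fano index $f$ with one geometric observation. First, writing $-K_X=fA$ and squaring gives $K_X^2=f^2A^2$; pulling back to the minimal resolution $Y\to X$, where each exceptional curve is a $(-3)$-curve, one sees that the intersection form on $\Cl X$ takes values in $\tfrac13\ZZ$, so $3A^2\in\ZZ$ and hence $f^2\mid 3d$, where $d=K_X^2$. (Locally at each $\frac13(1,1)$ point the class group is $\ZZ/3\ZZ$ and $-K_X$ restricts to a generator, which moreover forces $3\nmid f$, though this is not strictly needed below.) The decisive point is that if $X$ carries a floating $(-1)$-curve $C$, then $-K_X\cdot C=1$ and $A\cdot C\in\ZZ$ because $C$ lies in the smooth locus, so $f\mid 1$ and $f=1$. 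By theorem~\ref{thm:5} this disposes of every family except the eight surfaces listed there. For five of those, the bound $f^2\mid 3d$ already forces $f=1$, since $3d$ is squarefree (or has no admissible square factor) for $\Xsurf{2}{17/3}$, $\Xsurf{3}{5}$, $\Xsurf{4}{7/3}$, $\Xsurf{5}{5/3}$ and $\Xsurf{6}{2}$. For the remaining three I would exhibit the ample generator directly: on $\PP(1,1,3)$ one has $\Cl=\ZZ\langle\oo(1)\rangle$ and $-K=\oo(5)$, so $f=5$; on $\Bsurf{1}{16/3}=X_4\subset\PP(1,1,1,3)$ and $\Bsurf{2}{8/3}=X_6\subset\PP(1,1,3,3)$ adjunction gives $-K=\oo_X(2)$, and since $\Cl X=\ZZ\langle\oo_X(1)\rangle$ by Grothendieck--Lefschetz for quasismooth well-formed hypersurfaces, we get $f=2$ (for $\Bsurf{1}{16/3}$ the bound $f^2\mid 16$ leaves $f\in\{2,4\}$, and $f=4$ is excluded because $\oo_X(1)$ is a generator).

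\paragraph{Part (b), reduction.}
The structural fact I would use is that $\pi_1(X^0)$ is unchanged by blowing up a nonsingular point: such a blow-up replaces a $4$-ball in $X^0$ by the (simply connected) disc bundle of the exceptional $\PP^1$, glued along $S^3$, so van Kampen yields an isomorphism. By corollary~\ref{cor:1} every family is an iterated blow-up of nonsingular points of one of the surfaces of theorem~\ref{thm:5}, and $\Xsurf{6}{1}$ is the blow-up of $\Xsurf{6}{2}$; hence it suffices to compute $\pi_1(X^0)$ for those eight base surfaces.

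\paragraph{Part (b), base cases.}
For the base surfaces I would exploit the good models of table~\ref{table:1}. When a base surface is realized as a quasismooth, well-formed complete intersection $X$ in a simplicial toric variety $F$, one first checks that $X$ meets $\Sing F$ only in its $\frac13(1,1)$ points, so that $X^0=X\cap F^{\mathrm{sm}}$; a Lefschetz theorem for the smooth locus then identifies $\pi_1(X^0)\cong\pi_1(F^{\mathrm{sm}})$, and the latter is the finite abelian group $N/\langle\rho_1,\dots,\rho_m\rangle$ read off from the rays $\rho_i$ of the fan. Carrying this out, the rays generate $N$ for all base surfaces except $\Xsurf{6}{2}$, where they span a sublattice of index $3$ (see \S~\ref{sec:xsurf62}); this gives $\pi_1(X^0)=(0)$ in the first instance and $\pi_1(\Xsurf{6}{2}^0)=\ZZ/3\ZZ$. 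For $\PP(1,1,3)$ the vanishing is transparent directly, since $\PP(1,1,3)\setminus\{[0:0:1]\}$ is the total space of a line bundle over $\PP^1$, hence simply connected; for $\Bsurf{1}{16/3}$ and $\Bsurf{2}{8/3}$ the same Lefschetz comparison applies with $F=\PP(1,1,1,3)$, $\PP(1,1,3,3)$, whose smooth loci are fibred over $\PP^2$, $\PP^1$ and so simply connected. Combined with blow-up invariance and $\Xsurf{6}{1}=\mathrm{Bl}_{\mathrm{pt}}\Xsurf{6}{2}$, this produces the stated dichotomy. As a consistency check on the abelianisation I would compute the defect $\sigma$ from the lattice $L/N$ and invoke lemma~\ref{lem:3}, $H_1(X^0)\cong\FF_3^\sigma$, finding $\sigma=0$ except for $\Xsurf{6}{1}$, $\Xsurf{6}{2}$ (where $\sigma=1$).

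\paragraph{Main obstacle.}
I expect the technical heart to be the two inputs feeding the Lefschetz step: verifying for each base model that $X$ is disjoint from the non-orbifold and higher singular strata of $F$, so that $X^0=X\cap F^{\mathrm{sm}}$ genuinely holds, and the Lefschetz comparison $\pi_1(X^0)\cong\pi_1(F^{\mathrm{sm}})$ itself for quasismooth complete intersections in the smooth locus of a simplicial toric variety. A secondary subtlety is that lemma~\ref{lem:3} controls only the abelianisation, so for the $\pi_1=(0)$ cases one must genuinely produce the isomorphism with $\pi_1(F^{\mathrm{sm}})$ (or a direct simple-connectivity argument) rather than merely check $\sigma=0$. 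Finally, the one base surface without a simplicial toric model, $\Xsurf{5}{5/3}$ (and with it its blow-up $\Xsurf{5}{2/3}$), I would treat separately via its pentagon birational description, reducing again to the smooth del Pezzo surface of degree $5$.
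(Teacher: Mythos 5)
Your part~(a) skeleton (floating $(-1)$-curves force $f=1$, leaving the eight surfaces of theorem~\ref{thm:5}) is the same reduction the paper makes, and your divisibility bound $f^2\mid 3d$ is correct and genuinely efficient: since $3d$ is squarefree for $\Xsurf{2}{17/3}$, $\Xsurf{3}{5}$, $\Xsurf{4}{7/3}$, $\Xsurf{5}{5/3}$ and $\Xsurf{6}{2}$, it settles those five cases more cheaply than the paper, which instead exhibits pairs of curves in $X^0$ of coprime degrees. The gap is the claim that $\Cl X=\ZZ\langle\oo_X(1)\rangle$ ``by Grothendieck--Lefschetz'' for the two hypersurface families: Grothendieck--Lefschetz needs the hypersurface to have dimension at least $3$, and the claim is plainly false here. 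By table~\ref{table:2}, $\Bsurf{1}{16/3}$ has $r=5$, hence $\rk\Cl X=\rho(X)=r-k=4$, and $\Bsurf{2}{8/3}$ has $\rho(X)=6$; neither class group is cyclic, and this holds for \emph{every} surface in these families, not just special members. For $\Bsurf{2}{8/3}$ your own numerical bound rescues the conclusion ($f^2\mid 8$ gives $f\leq 2$, while $-K_X=2\oo_X(1)$ gives $f\geq 2$), but for $\Bsurf{1}{16/3}$ the bound leaves $f\in\{2,4\}$, and excluding $f=4$ requires proving that $\oo_X(1)$ is primitive --- which is exactly what the invalid Lefschetz appeal was supposed to do. The paper does this by producing a ruling $C\subset X^0$ with $\oo_X(1)\cdot C=1$ (the curve must lie in the smooth locus so that the intersection number is an integer); some such geometric input is unavoidable.

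In part~(b), your reduction along blow-downs of floating $(-1)$-curves is again the paper's first step, but the engine of your base-case computation --- a Lefschetz-type isomorphism $\pi_1(X^0)\cong\pi_1(F^{\mathrm{sm}})$ for quasismooth well-formed complete intersections in simplicial toric varieties --- is precisely the statement that remark~\ref{rem:12} of the paper says the authors could not find in the literature. You flag it as the ``main obstacle'' but supply no proof, and note that the $L_i$ in table~\ref{table:1} are in general only nef, not ample, so no off-the-shelf open-variety Lefschetz theorem applies; the proposal is therefore incomplete at its core. The paper's actual route avoids this entirely: each of the eight base surfaces qG-degenerates to the toric surface of the corresponding polygon in \S~\ref{sec:toric-degenerations}; passing to the maximal crepant partial resolution $\widehat{X}_0$, one checks that $X^0$ is diffeomorphic to $\widehat{X}_0^0$ in all cases except $\Xsurf{5}{5/3}$ and computes $\pi_1$ torically, while $\Xsurf{5}{5/3}$ (whose degeneration has two $\frac1{4}(1,1)$ points) is handled by gluing in Milnor fibres of the qG-smoothings and applying Seifert--van Kampen together with the surjectivity of $\pi_1(\partial F)\to\pi_1(F)$ from \cite{MR1116920}. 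Your substitute for that last case (``reduce to the degree~$5$ del Pezzo surface via the pentagon construction'') is also not a proof: there $X^0$ is the complement of five disjoint $(-3)$-curves in a rational surface, and deleting curves can create fundamental group (exactly as happens for $\Xsurf{6}{2}$), while lemma~\ref{lem:3} controls only the abelianisation. So both halves of the proposal hinge on a key step that is either false as stated (part~(a)) or unproven and unavailable in the literature (part~(b)).
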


\begin{rem}
  \label{rem:12}
  Our proof of this fact, which we sketch below, is by ad hoc
  computations. For a quasi-smooth and well-formed complete
  intersection $X$ of ample (or nef, or irrelevant) line bundles in a
  well-formed simplicial toric variety $F$, it is natural to imagine
  that there would be some general Lefschetz type results relating
  $\Cl F$ to $\Cl X$ and $\pi_1(F^0)$ to $\pi_1(X^0)$. We could not
  find these results in the literature.
\end{rem}

\begin{proof} We give a sketch of the proof, leaving some of the
  details to the reader.  We start by proving (a). Let $f$ be as
  stated and $A$ the divisor class such that $-K_X=fA$, then we need
  to show that $A$ is primitive in $\Cl X$. It is clearly enough to
  show that the class of $A$ in $H^2(X^0,\ZZ)$ is primitive. In all
  cases, we check this by producing a compact curve $C\subset X^0$
  with $A\cdot C=1$ or a pair of compact curves $C_1$,
  $C_2\subset X^0$ of degrees $d_i=A\cdot C_i\in \NN$ such that
  $\hcf (d_1,d_2)=1$. If $X$ contains a floating $(-1)$-curve
  $C\subset X^0$ then $-K_X\cdot C=1$ and then clearly $f=1$: thus
  from now on we work with the $8$ families of surfaces of
  theorem~\ref{thm:4}. We refer to the constructions and pictures in
  the statement of theorem~\ref{thm:4}. The analysis is very simple
  and we go through each case in turn:

  \paragraph{Case $k=1$} For $X=\Bsurf{1}{16/3}$ $X$ the picture shows
  that $X$ has two rulings of rational curves $C$ with $C^2=0$ and
  $A\cdot C=1$ in both cases.

For $X=\PP(1,1,3)$ if $C$ is a general curve of
self-intersection $3$ then $A\cdot C=1$. 

\paragraph{Case $k=2$} For $X=\Bsurf{2}{8/3}$ $X$ the picture again
shows that $X$ has two rulings of rational curves $C$ with $C^2=0$ and
$A\cdot C=1$ in both cases.

For $X=\Xsurf{2}{17/3}$ the picture shows that $X^0$ contains a curve
$C_1$ with $C_1^2=1$ and a ruling $C_2$ with $C_2^2=0$, so $A\cdot
C_1=3$ and $A\cdot C_2=2$.

\paragraph{Case $k=3$} The surface $X=\Xsurf{3}{5}$ has a morphism
to $\PP(1,1,3)$ and if $C_1\subset X^0$ is the proper transform of a
general curve of self-intersection $3$ then $A\cdot C_1=5$. In
addition $X$ has a ruling $C_2$ with $C_2^2=0$ and $A\cdot C=2$.

\paragraph{Case $k=4$} $X=\Xsurf{4}{7/3}$ has morphisms to $\PP^1$ and
$\PP(1,1,3)$.

\paragraph{Case $k=5$} $X=\Xsurf{5}{5/3}$ has five morphisms to
$\PP^1$ and five morphisms to $\PP(1,1,3)$.

\paragraph{Case $k=6$} The toric surface $X=\Xsurf{6}{2}$ has
morphisms to $\PP^1$ and also to a toric Gorenstein cubic surface $Y$ with
$3\times A_2$ points, hence there are cuves $C_1$, $C_2\subset X^0$
with $A\cdot C_1=2$ and $A\cdot C_2=3$. 

\smallskip

This concludes the proof of (a). For the proof of (b) we need to equip
ourselves with a topological model of $X$. It is clear that if
$X\to X_1$ is the blow-down of a floating $(-1)$-curve, then
$\pi_1(X^0)=\pi_1(X_1^0)$, so again we only need to consider the $8$
families of theorem~\ref{thm:4}.

  To get a topological model of $X^0$ for a surface $X$ of one of these
  $8$ families, we use the toric qG-degenerations of
  \S~\ref{sec:toric-degenerations}: a surface $X$
  qG-degenerates to a toric surface $X_0$ with fan the face-fan of the
  corresponding polygon in table~\ref{tab:third_one_one_polygons},
  pictured in figure~\ref{fig:third_one_one_polygons}.
  Denote by $\widehat{X}_0$ the maximal crepant partial
  resolution of $X_0$. From a direct inspection of the polygons we see
  that, with the exception of
  $\Xsurf{5}{5/3}$, $\widehat{X}_0$ has $k$ $\frac1{3}(1,1)$ points
  $P_i$ ($i=1,\dots, k$) and is elsewhere nonsingular. We treat the
  case of $\Xsurf{5}{5/3}$ separately below. In all other cases $X^0$
  is diffeomorphic to the toric surface $\widehat{X}_0^0$ and we
  determine $\pi_1(X^0)=\pi_1(\widehat{X}_0^0)$ by well-known toric
  methods. 

  The surface $\Xsurf{5}{5/3}$ degenerates to the toric surface $X_0$
  corresponding to polygon $7$: in this case $\widehat{X}_0=X_0$ has
  two $T$ singularities $Q_1$, $Q_2$ of type $\frac1{4}(1,1)$.  In
  this case $X^0$ is obtained as a topological space by attaching to
  $\widehat{X}_0^0$ the Milnor fibres of the singularities $Q_1$,
  $Q_2$ along their boundaries. The result follows from (i) the fact
  that $\pi_1(\widehat{X}_0^0)=(0)$ and (ii) a calculation of
  $\pi_1(X^0)$ from $\pi_1(\widehat{X}_0^0)$. Here (i) holds because
  we can see by direct inspection that the rays of the fan of
  $\widehat{X}_0^0$ generate $N$ as a group. As for (ii), it is stated
  in \cite[Proposition~13]{MR1116920} that if $F$ is the Milnor fibre
  of a qG-smoothing of a primitive class $T$ singularity
  $\frac1{r^2}(1,ar-1)$ with $\hcf(r,a)=1$, then $\pi_1(F)=\ZZ/r\ZZ$
  and $\pi_1(\partial F)=\ZZ/r^2\ZZ$ and the natural homomorphism
  $\pi_1(\partial F)\to \pi_1(F)$ is surjective (we only need the
  $r=2$ case of this). By two applications of the Seifert--van Kampen
  theorem, it follows in this case that $\pi_1(X^0)=(0)$.
\end{proof}

\begin{center}
\begin{longtable}{cccccc}
\caption{del Pezzo surfaces with $1/3(1,1)$ and $f>1$} \label{table:2} \\
\toprule
\multicolumn{1}{c}{Name}&\multicolumn{1}{c}{$h^0(X,-K_X)$}&\multicolumn{1}{c}{$r$}&
       \multicolumn{1}{c}{No. moduli}   &\multicolumn{1}{c}{Model Construction}&\multicolumn{1}{c}{$f$}\\
\midrule
\endfirsthead
\multicolumn{4}{l}{\tiny Continued from previous page.}\\
\addlinespace[1.7ex]
\midrule
\multicolumn{1}{c}{Name}&\multicolumn{1}{c}{$h^0(X,-K_X)$}&\multicolumn{1}{c}{$r$}
&\multicolumn{1}{c}{No. moduli}&\multicolumn{1}{c}{Model
  Construction}&\multicolumn{1}{c}{$f$}\\
\midrule
\endhead
\midrule
\multicolumn{4}{r}{\tiny Continued on next page.}\\
\endfoot
\bottomrule
\endlastfoot
\oddrow $\Ssurf{1}{25/3}$&$9$&$2$&-8&\calulatepadding{$\PP(1,1,3)$}\padding& 5\\
\evnrow $\Bsurf{1}{16/3}$&$6$&$5$&-2& \calulatepadding{$X_4\subset
  \PP(1,1,1,3)$}\padding& 2\\
\oddrow $\Bsurf{2}{8/3}$&$3$&$8$&2&\calulatepadding{$X_6\subset
  \PP(1,1,3,3)$}\padding& 2\\
\addlinespace[1.1ex]
\bottomrule
\end{longtable}
\end{center}

\begin{center}
\setlength{\extrarowheight}{0.3em}
\begin{longtable}{ccccccc}
\caption{del Pezzo surfaces with $1/3(1,1)$ and $f=1$} \label{table:1} \\
\toprule
\multicolumn{1}{c}{Name}&\multicolumn{1}{c}{$h^0(X,-K_X)$}&\multicolumn{1}{c}{$r$}&\multicolumn{1}{c}{No.\ moduli}&\multicolumn{1}{c}{Weights and Line bundles}&\multicolumn{1}{c}{$\Nef F$}\\
\midrule
\endfirsthead
\multicolumn{4}{l}{\tiny Continued from previous page.}\\
\addlinespace[1.7ex]
\midrule
\multicolumn{1}{c}{Name}&\multicolumn{1}{c}{$h^0(X,-K_X)$}&\multicolumn{1}{c}{$r$}&\multicolumn{1}{c}{No. moduli}&\multicolumn{1}{c}{Weights and Line bundles}&\multicolumn{1}{c}{$\Nef F$}\\
\midrule
\endhead
\midrule
\multicolumn{4}{r}{\tiny Continued on next page.}\\
\endfoot
\bottomrule
\endlastfoot
\oddrow $\Xsurf{1}{22/3}$&$8$&$3$&$-6$&\vgap\calulatepadding{$\begin{array}{cccc}
1&1&2&0\\0&1&3&1\\\end{array}$}\padding&\calulatepadding{$\begin{array}{cc}
1&2\\1&3\\\end{array}$}\padding \\
\evnrow $\Xsurf{1}{19/3}$&$7$&$4$&$-4$&\vgap\calulatepadding{$\begin{array}{ccccc}
1&3&3&0&0\\0&2&1&1&0\\1&2&0&0&1\\\end{array}$}\padding&\calulatepadding{$\begin{array}{ccc}
3&0&0\\2&1&0\\2&0&1\end{array}$}\padding\\
\oddrow $\Xsurf{1}{16/3}$&$6$&$5$&$-2$&\vgap \calulatepadding{$\begin{array}{ccccc|c}
1&1&0&0&0&1\\0&0&1&1&3&3\\\end{array}$}\padding&\calulatepadding{$\begin{array}{cc}
1&0\\0&1\\\end{array}$}\padding\\
\evnrow $\Xsurf{1}{13/3}$&$5$&$6$&$0$&\vgap \calulatepadding{$\begin{array}{ccccc|c}
1&1&3&1&0&4\\0&0&0&1&1&1\\\end{array}$}\padding&\calulatepadding{$\begin{array}{cc}
1&1\\0&1\\\end{array}$}\padding\\
\oddrow $\Xsurf{1}{10/3}$&$4$&$7$&$2$&\vgap\calulatepadding{$\begin{array}{cccccc|cc}
1&1&2&1&0&0&2&2\\0&0&1&2&1&1&2&2\\\end{array}$}\padding&\calulatepadding{$\begin{array}{cc}
2&1\\1&2\\\end{array}$}\padding\\
\evnrow
  $\Xsurf{1}{7/3}$&$3$&$8$&$4$&\vgap\calulatepadding{\begin{tabular}{c}
  $F=\wG (2,5)$ and $\oo_F(2)^{\oplus\,4}$\\where $w=\bigl(\frac1{2},\frac1{2},\frac1{2},\frac3{2},\frac3{2}\bigr)$\end{tabular}}\padding&\calulatepadding{$\begin{array}{c}
1\\\end{array}$\padding}\\
\oddrow $\Xsurf{1}{4/3}$&$2$&$9$&$6$&\vgap\calulatepadding{$\begin{array}{ccccc|cc}
1&1&2&2&3&4&4\\\end{array}$}\padding&\calulatepadding{$\begin{array}{c}
1\\\end{array}$\padding}\\
\evnrow $\Xsurf{1}{1/3}$&$1$&$10$&$8$&\vgap\calulatepadding{$\begin{array}{cccc|c}
1&2&3&5&10\\\end{array}$}\padding&\calulatepadding{$\begin{array}{c}
1\\\end{array}$}\padding\\
\oddrow $\Xsurf{2}{17/3}$&$6$&$5$&-4&\vgap\calulatepadding{$\begin{array}{ccccc|c}
1&1&2&3&0&4\\-1&0&1&3&1&2\\\end{array}$}\padding&\calulatepadding{$\begin{array}{cc}
2&1\\1&1\\\end{array}$}\padding\\
\evnrow $\Xsurf{2}{14/3}$&$5$&$6$&-2&\vgap\calulatepadding{$\begin{array}{ccccc|c}
1&1&0&0&-1&0\\0&1&1&3&1&4\\\end{array}$}\padding&\calulatepadding{$\begin{array}{cc}
1&0\\1&1\\\end{array}$}\padding\\
\oddrow $\Xsurf{2}{11/3}$&$4$&$7$&0&\vgap\calulatepadding{$\begin{array}{cccccc|c}
1&0&0&1&0&1&2\\0&1&0&0&1&1&2\\0&0&1&1&1&4&4\\\end{array}$}\padding&\calulatepadding{$\begin{array}{ccc}
3&1&1\\1&3&1\\4&4&4\\\end{array}$}\padding\\
\evnrow $\Xsurf{2}{8/3}$&$3$&$8$&2&\vgap\calulatepadding{$\begin{array}{ccccc|c}
1&1&1&1&0&3\\0&0&1&3&1&3\\\end{array}$}\padding&\calulatepadding{$\begin{array}{cc}
1&1\\1&3\\\end{array}$}\padding\\
\oddrow $\Xsurf{2}{5/3}$&$2$&$9$&4&\vgap\calulatepadding{$\begin{array}{ccccc|c}
1&1&2&1&0&4\\0&1&3&3&1&6\\\end{array}$}\padding&\calulatepadding{$\begin{array}{cc}
2&1\\3&3\\\end{array}$}\padding\\
\evnrow $\Xsurf{2}{2/3}$&$1$&$10$& 6&\vgap\calulatepadding{$\begin{array}{ccccc|cc}
1&2&2&3&3&4&6\\\end{array}$}\padding&\calulatepadding{$\begin{array}{c}
1\\\end{array}$}\padding\\
\oddrow $\Xsurf{3}{5}$&$5$&$6$&-4&\vgap\calulatepadding{$\begin{array}{ccccc}
1&0&0&1&0\\0&1&-1&1&0\\0&0&1&1&3\\\end{array}$}\padding&\calulatepadding{$\begin{array}{ccc}
1&1&0\\1&0&0\\1&1&1\\\end{array}$}\padding\\
\evnrow $\Xsurf{3}{4}$&$4$&$7$&-2&\vgap\calulatepadding{$\begin{array}{cccccc}
1&-1&1&0&0&0\\-1&1&0&0&0&1\\2&1&0&1&0&0\\1&2&0&0&1&0\\\end{array}$}\padding&\calulatepadding{$\begin{array}{ccccccc}
0&1&0&1&0&1&2\\0&0&1&0&1&2&1\\1&2&1&2&2&2&2\\1&1&2&2&2&4&4\\\end{array}$}\padding\\
\oddrow $\Xsurf{3}{3}$&$3$&$8$&0&\vgap\calulatepadding{$\begin{array}{ccccc|c}
1&1&1&0&0&2\\0&0&1&1&3&3\\\end{array}$}\padding&\calulatepadding{$\begin{array}{cc}
1&0\\1&1\\\end{array}$}\padding\\
\evnrow $\Xsurf{3}{2}$&$2$&$9$&2&\vgap\calulatepadding{$\begin{array}{ccccc|c}
1&3&2&0&-1&4\\0&0&1&1&1&2\\\end{array}$}\padding&\calulatepadding{$\begin{array}{cc}
2&0\\1&1\\\end{array}$}\padding\\
\oddrow $\Xsurf{3}{1}$&$1$&$10$&4&\vgap\calulatepadding{$\begin{array}{cccccc|c}
1&0&0&2&1&1&4\\0&1&0&1&2&1&4\\0&0&1&1&1&2&4\\\end{array}$}\padding&\calulatepadding{$\begin{array}{ccc}
2&1&1\\1&2&1\\1&1&2\\\end{array}$}\padding\\
\evnrow $\Xsurf{4}{7/3}$&$2$&$9$&0&\vgap\calulatepadding{$\begin{array}{ccccc|c}
1&0&0&-1&-1&0\\0&3&3&2&1&6\\\end{array}$}\padding&\calulatepadding{$\begin{array}{cc}
0&-1\\1&2\\\end{array}$}\padding\\
\oddrow $\Xsurf{4}{4/3}$&$1$&$10$&2&\vgap\calulatepadding{$\begin{array}{cccccccc|cc}
1&0&0&0&2&1&1&1&2&3\\0&1&0&0&1&2&1&1&2&3\\0&0&1&0&1&1&2&1&3&2\\0&0&0&1&1&1&1&2&3&2\\\end{array}$}\padding&\calulatepadding{$\begin{array}{cccc}
2&1&1&1\\1&2&1&1\\1&1&2&1\\1&1&1&2\\\end{array}$}\padding\\
\evnrow $\Xsurf{4}{1/3}$&$0$&$11$&4&\vgap\calulatepadding{$\begin{array}{ccccc|cc}
2&2&3&3&3&6&6\\\end{array}$}\padding&\calulatepadding{$\begin{array}{c}
1\\\end{array}$}\padding\\
\oddrow $\Xsurf{5}{5/3}$&$1$&$10$&0&\vgap\calulatepadding{\begin{tabular}{c}$F$ and
                                     $D(s)$ where \\ $s\colon E\otimes L
                                     \to E^\vee$ as in \S~\ref{sec:x_5-53}\end{tabular}}\padding&\calulatepadding{}\padding\\
\evnrow $\Xsurf{5}{2/3}$&$0$&$11$&2&\vgap\calulatepadding{$F$ and $L^{\oplus
                                     2}$ as in \S~\ref{sec:x_5-23}}\padding&\\
\oddrow
  $\Xsurf{6}{2}$&$1$&$10$&-2&\vgap\calulatepadding{\begin{tabular}{c}
                                                     $F/\Bmu_3$ (see
                                                     \S~\ref{sec:xsurf62})
                                                     where $F$ has
                                                     weights\\ 
$\begin{array}{cccccc}
1&-1&1&0&0&0\\1&0&0&1&0&0\\0&1&0&0&1&0\\-1&1&0&0&0&1\end{array}$\end{tabular}}\padding&\calulatepadding{$\begin{array}{ccccc}1&0&0&1&0\\1&1&0&1&1\\0&1&1&1&1\\ 
0&0&1&0&1\\\end{array}$}\padding\\
\evnrow $\Xsurf{6}{1}$&$0$&$11$&0&\vgap\calulatepadding{\begin{tabular}{c}
 $\PP^3/\Bmu_3$ and $\oo(3)$ where \\ $\Bmu_3$ acts with weights $\frac1{3}, \frac1{3}, \frac{2}{3}, \frac{2}{3}$\end{tabular}}\padding&\\
\addlinespace[1.1ex]
\bottomrule
\end{longtable}
\end{center}


\section{Constructions}
\label{sec:constructions}


This section is organised as follows: In \S~\ref{sec:well-form-simpl}
we recall basic facts about toric varieties and well-formed complete
intersections in them; in \S~\ref{sec:sample-computations} we give an
extended example of the computations needed to see that the
constructions given in table~\ref{table:1} really do what they say
they do. \S~\ref{sec:x_5-53} and \S~\ref{sec:x_5-23} give good model
constructions of the surface $\Xsurf{5}{5/3}$ and of a surface in the
family $\Xsurf{5}{2/3}$. The final \S~\ref{sec:birat-constr-4}
contains some Italian-style birational constructions in certain
special cases.

\subsection{Simplicial toric varieties and well-formed complete
  intersections in them}
\label{sec:well-form-simpl}

As usual $\TT\cong \CC^{\times\, d}$ is a $d$-dimensional torus,
$M=\Hom (\TT,\CC^\times)$ is the group of characters, and $~{N=\Hom (M,\ZZ)}$.

\paragraph{From a fan to a GIT quotient}
\label{sec:from-fan-git}

We recall how to interpret a toric variety as a GIT quotient. To a
rational fan $\Sigma$ in $N_\RR$ one associates a toric variety
$X_\Sigma$. The toric variety is proper if and only if $\Sigma$ is a
complete fan, that is, the support $|\Sigma|$ of the fan is all of
$N_\RR$; the variety is called simplicial if and only if all cones of
the fan are simplicial cones. All toric varieties in this section are
proper (and, in fact, projective, see below) and simplicial.

A simplicial toric variety $X$ is in a natural way a smooth
Deligne-Mumford stack, called the canonical stack of a simplicial
toric variety in \cite{MR2774310}. Indeed, $X$ is the union of affine
open subsets $X_\sigma=\Spec k[M\cap \sigma^\vee]$, where
$\sigma\subset N$ is a simplex of maximal dimension. Denote by
$\rho_1, \dots, \rho_n\in N$ the primitive generators of the rays of
$\sigma$ (a ray is a $1$-dimensional cone), and let
$N_\sigma=\sum_{i=1}^n\ZZ\rho_i$, then the dual lattice
$M_\sigma=\Hom (N_\sigma , \ZZ)$ can be constructed as the
over-lattice
\[
M_\sigma=\{m\in M\otimes \QQ\mid \langle m, n \rangle\in \ZZ\;\forall
\; n\in N_\sigma\}
\]
The group scheme $\Bmu$ with character group the finite group
$N/N_\sigma$ acts on $\CC[M_\sigma\cap \sigma^\vee]=\CC[\NN^r]$ with
ring of invariants $\CC[M\cap \sigma^\vee]$ and this shows that
$X_\sigma$ is the quotient of the smooth variety $\Spec k[M_\sigma
\cap \sigma^\vee]$ by $\Bmu$. 
These charts give $X_\Sigma$ the structure of a smooth
Deligne--Mumford stack with the following two properties:
\begin{enumerate}
\item the stabilizer of the generic point is trivial, that is,
  $X_\Sigma$ is an orbifold;
\item the stabilizers of all codimension $1$ points are also trivial.
\end{enumerate}

Next we see how to interpret $X_\Sigma$ as a global GIT quotient.
Denote by $\rho_1,\dots, \rho_m\in N$ the primitive generators of the
rays of the fan. Here and below we
assume for simplicity that the $\rho_i$ generate $N$ as a group. Then
we have an exact sequence:
\[
(0) \to \LL \to \ZZ^m\overset{\rho}{\to} N \to (0)
\]
and a dual exact sequence\footnote{If the map $\rho$ is not
  surjective but has finite cokernel, then one must work instead with the Gale
dual sequence, cf.\ \cite{MR2114820}.}:
\[
(0) \to M \to \ZZ^{\ast\,m}\overset{D}{\to} \LL^\ast\to (0)
\]
Note that the homomorphism $\rho$ is not enough to reconstruct the fan
$\Sigma$. Below we identify a simplex $~{\sigma=\langle \rho_{i_1},\dots
\rho_{i_k}\rangle}$ of $\Sigma$ with the list of indices $[i_1,\dots, i_k]$. 
\begin{dfn}
  \label{dfn:1}
  It is well-known~\cite{MR1299003} that $\LL^\ast =\Cl X_\Sigma$ is
  the divisor class group of $X_\Sigma$. We call $D$ the \emph{divisor
    homomorphism} of the toric variety.
\end{dfn}
The homomorphism $D\colon \ZZ^{\ast\,m} \to \LL^\ast$ is dual to a
group homomorphism $\GG\to \CC^{\times \, m}$ where $\GG$ is the torus
with character group $\LL^\ast$; $\GG$ acts on $\CC^m$ via this
homomorphism and as we next explain in greater detail $X_\Sigma$ is a
GIT quotient $\CC^m/\!/\GG$. Identify $\LL^\ast$ with the group of
$\GG$-linearised line bundles on $\CC^n$. An element
$\omega \in \LL^\ast$ is also called a stability condition. The choice
of a stability condition $\omega\in \LL^\ast$ determines a GIT
quotient $\CC^m/\!/_\omega \GG$ and we need to state what choices of
$\omega$ produce $X_\Sigma$.  Denoting by $x_i$ the standard basis of
$\ZZ^{\ast\, m}$, and writing $D_i=D(x_i)\in \LL^\ast$, it is
well-known that the nef cone of $X_\Sigma$ is:
\[
\Nef X_\Sigma = \bigcap_{\sigma\in \Sigma}\langle D_i\mid i \not \in
\sigma \rangle
\] 
In this paper we always assume that $X_\Sigma$ is projective; in other
words, $\Nef X_\Sigma$ has nonempty interior $\Amp X_\Sigma$ (the ample
cone).  Then, for all $\omega\in \Amp X_\Sigma$,
we have that $X_\Sigma=\CC^n/\!/_\omega \GG$.

In fact, we can be more explicit than this. Thinking of $x_i$ as a
coordinate function on $\CC^m$, define the \emph{irrelevant ideal} as:
\[
\Irr_\Sigma=\Bigl(\prod_{i\not \in \sigma} x_i \mid \sigma \in
\Sigma\Bigr)\subset \CC[x_1,\dots, x_m]
\]
and let $Z_\Sigma=V(\Irr_\Sigma)\subset \CC^m$ the variety of
$I$. Then for all $\omega\in \Amp X_\Sigma$, $Z_\Sigma\subset \CC^m$ is the
set of $\omega$-unstable points, $\CC^m\setminus Z_\Sigma$ is the set of
stable points (there are no strictly semi-stable points in this situation), and
\[
X_\Sigma =\CC^m/\!/_\omega \GG = (\CC^m\setminus Z_\Sigma) /\GG
\]
is a ``bona fide'' quotient. 

The action of $\GG$ on $\CC^m \setminus Z_\Sigma$ has finite stabilizers;
this naturally endows the quotient $(\CC^m\setminus Z_\Sigma)/\GG$ with a structure
of a smooth Deligne-Mumford stack which we denote by $[(\CC^m
\setminus Z_\Sigma)/\GG]$; this stack structure is the same as the canonical stack
structure on $X_\Sigma$. Below we describe in detail an atlas of charts.

\paragraph{From a GIT quotient to a fan}
\label{sec:from-git-quotients}

Conversely, consider a rank $r$ lattice $\LL^\ast\cong \ZZ^r$ and
denote by $\GG$ the torus with character group $\LL^\ast$. Consider
now $\ZZ^{\ast\, m}$, denote by $x_i$ the standard basis elements, let
$D\colon \ZZ^{\ast \, m}\to \LL^\ast$ be a group homomorphism such
that the $D_i=D(x_i)$ span a strictly convex cone $\mathcal{C}\subset
\LL^\ast_\RR$. $D$ dualises to a group homomorphism $\GG \to
\CC^{\times \, m}$ and hence $\GG$ acts on $\CC^m$. 
\begin{dfnrem}
  \label{rem:2}
  It is easy to see~\cite{A13} that:
  \begin{enumerate}[(1)]
  \item Choose a basis of $\LL^\ast \cong \ZZ^r$ and identify $D$ with
    a $r\times m$ matrix, which we call the \emph{weight
      matrix}. $\GG$ acts faithfully if and only if the rows of $D$
    span a saturated sublattice of $\ZZ^r$, if and only if the $\hcf$
    of all the $r \times r$ minors of $D$ is $1$. A matrix satisfying this
    condition is called \emph{standard}.
  \item $\GG$ acts faithfully on the divisor $D_i=(x_i=0)\subset
    \CC^m$ if and only if the matrix $D_{\,\widehat{i}}=(D_1\dots,
    \widehat{D}_i,\dots, D_m)$ obtained from $D$ by removing the
    $i$-th column, is standard.
  \end{enumerate}
\end{dfnrem}

\begin{dfn}
  \label{dfn:2}
The homomorphism $D\colon \ZZ^{\ast\, m}\to \LL^\ast$ is
\emph{well-formed} if both the weight matrix $D$ and the $D_{\,\widehat{i}}$ for
all $i=1, \dots, m$ are standard.
\end{dfn}

\begin{rem}
  \label{rem:3}
  We can take GIT quotients for any $D$; however, if $D$ is the
  divisor homomorphism of some toric variety $X_\Sigma$, then $D$ is
  well-formed. The aim of the considerations that follow is precisely to state that
  the converse is also true.
\end{rem}

Given a stability condition $\omega \in \LL^\ast_\RR$, we can form the
GIT quotient
\[
X_\omega:=\CC^m/\!/_\omega \GG
\]
There is a wall-and-chamber decomposition of $\mathcal{C} \subset
\LL^\ast_\RR$, called the secondary fan, and if stability conditions $\omega_1$, $\omega_2$ lie
in the same chamber then the GIT quotients $X_{\omega_1}$,
$X_{\omega_2}$ coincide. In more detail, the walls of the
decomposition are the cones
of the form $\langle D_{i_1}, \dots, D_{i_k}\rangle\subset \LL^\ast_\RR$ that have
codimension one. The chambers are the connected components of the
complement of the union of all the walls; these are $r$-dimensional
open cones in $\mathcal{C}$. By construction, a chamber is the
intersection of the interiors of the simplicial $r$-dimensional cones $\langle D_{i_1},
\dots, D_{i_r}\rangle\subset \LL^\ast_\RR$ that contain it. 
Choose now a chamber, and pick a stability condition $\omega$ in
it. Given such an $\omega$, the irrelevant ideal $I_\omega \subset
\CC[x_1,\dots, x_m]$ is
\[
I_\omega = \Bigl(x_{i_1}\cdots x_{i_r}\mid \omega \in \langle D_{i_1}, \dots, D_{i_r}\rangle\Bigr)
\]
the unstable locus is $Z_\omega=V(I_\omega)$; and the GIT quotient is
the bona fide quotient
\[
X_\omega = (\CC^m\setminus Z_\omega)/\GG \,.
\]
Note that $I_\omega$, $Z_\omega$ and the quotient $X_\omega$ depend
only on the chamber that $\omega$ sits in and not on $\omega$ itself. Given such
an $\omega$ we can also form a simplicial fan $\Sigma$ where
\[
\sigma\in \Sigma
\quad 
\text{if and only if}
\quad
\omega \in \langle D_i \mid i \not \in \sigma \rangle 
\]
and $\Sigma$ also depends only on the chamber that $\omega$ sits
in. The following Proposition collects a few well-known facts that can
easily be synthesized from the literature \cite{MR1299003, MR2114820,
  MR2774310, A13}:
\begin{pro}
  \label{pro:1}
 Let $D\colon \ZZ^{\ast\,m}\to \LL^\ast$ be a homomorphism as above,
 choose a chamber in the secondary fan as described above, and let $\omega$ be a
 stability condition in it. Let $I_\omega$, $Z_\omega$, $X_\omega$,
 $\Sigma$, be as above:
\begin{enumerate}[(1)]
\item If (as we are assuming) $\mathcal{C}$ is a strictly convex cone,
  then $\Sigma$ is a complete fan;
\item If $D$ is well-formed, then $X_\omega = [(\CC^m\setminus
  Z_\omega)/\GG]$ is isomorphic to $X_\Sigma$ as a Deligne--Mumford
  stack, $D$ is the divisor homomorphism of the toric variety
  $X_\Sigma$, and the chosen chamber is $\Amp X_\Sigma$;
\item If $D$ is not well-formed, there is a natural non-representable morphism of
  Deligne--Mumford stacks $X_\omega =[(\CC^m\setminus
  Z_\omega)/\GG]\to X_\Sigma$ and the two stacks have the same coarse
  moduli space.\footnote{In this case the chamber canonically
    determines a stacky fan in the
    sense of \cite{MR2114820}, such that $X_\omega$ is isomorphic to
    the toric Deligne--Mumford stack associated to the stacky fan. We
    don't need this construction in this paper.} 
\item In general, if $D$ is not well-formed, \cite{A13} describes an
  algorithm to construct a well-formed $D^\prime$ such that $D^\prime$
  is the divisor homomorphism of $X_\Sigma$.
\end{enumerate}
\end{pro}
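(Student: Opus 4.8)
The plan is to establish the four items essentially independently, in each case reducing to a standard result in toric GIT and the Cox construction and supplying only the short glue; I will cite \cite{MR1299003, MR2114820, MR2774310, A13} for the bulk of the content.

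\textbf{(1)} My plan for completeness is to avoid covering $N_\RR$ by cones directly and instead argue that $X_\omega$ is proper. A monomial $x^a$ with $a\in\ZZ^m_{\geq 0}$ is $\GG$-invariant if and only if $D(a)=0$, i.e. $a\in M\cap\RR^m_{\geq 0}$, since $\GG$ acts on $x^a$ through the character $D(a)=\sum_i a_iD_i$. Now $M_\RR=\LL_\RR^\perp$ inside $\RR^m$ and $\langle\lambda,D_i\rangle=\lambda_i$ for $\lambda\in\LL_\RR$, so that the dual cone is $\mathcal{C}^\vee=\LL_\RR\cap\RR^m_{\geq 0}$; by Gale duality (Stiemke's theorem of the alternative applied to the complementary subspaces $\LL_\RR$ and $M_\RR$), the strict convexity of $\mathcal{C}$ — equivalently the existence of $\lambda\in\LL_\RR$ with all $\lambda_i>0$ — is exactly the statement $M_\RR\cap\RR^m_{\geq 0}=(0)$. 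Hence the only invariants are the constants, $\CC[x_1,\dots,x_m]^\GG=\CC$, and $X_\omega=\mathrm{Proj}\bigoplus_{n\geq 0}\CC[x]_{n\omega}$ is projective over $\Spec\CC$. Its coarse space is the toric variety $X_\Sigma$, which is therefore proper, so $\Sigma$ is complete.

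\textbf{(2)} Here I would invoke the Cox homogeneous-coordinate presentation \cite{MR1299003} together with its stacky refinements \cite{MR2774310, A13}. For $\omega$ in a chamber the recipe $\sigma\in\Sigma\iff\omega\in\langle D_i\mid i\notin\sigma\rangle$ produces a simplicial fan whose irrelevant ideal is precisely $I_\omega$ and whose Cox quotient is $(\CC^m\setminus Z_\omega)/\GG$. The one thing to check is that well-formedness (Definition~\ref{dfn:2} and Definition-Remark~\ref{rem:2}) is exactly the hypothesis making $\GG=\Hom(\Cl X_\Sigma,\CC^\times)$ and $D$ the divisor homomorphism: standardness of $D$ says the $\rho_i$ generate $N$ and $\GG\to\CC^{\times\,m}$ is faithful (no stabiliser of the generic point), while standardness of each $D_{\,\widehat{i}}$ says there is no stabiliser in codimension one. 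These are precisely the two defining properties of the canonical stack recalled earlier in this section, so $[(\CC^m\setminus Z_\omega)/\GG]$ is the canonical Deligne--Mumford stack of $X_\Sigma$ and $\LL^\ast=\Cl X_\Sigma$ via $D$. Finally the identity $\Nef X_\Sigma=\bigcap_\sigma\langle D_i\mid i\notin\sigma\rangle$ exhibits $\Nef X_\Sigma$ as the closed chamber containing $\omega$, whence the chosen chamber is $\Amp X_\Sigma$.

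\textbf{(3) and (4)} When $D$ is not well-formed the quotient stack $[(\CC^m\setminus Z_\omega)/\GG]$ still makes sense, but $\GG$ now acts with a non-trivial stabiliser, either of the generic point or in codimension one; this stabiliser is a finite group scheme, and rigidifying by it produces the canonical stack $X_\Sigma$. The resulting rigidification map $X_\omega\to X_\Sigma$ is the asserted non-representable morphism and is an isomorphism on coarse moduli spaces. I would phrase this uniformly through the stacky-fan formalism of \cite{MR2114820}: the chamber canonically determines a stacky fan whose associated toric Deligne--Mumford stack is $X_\omega$ and whose underlying reduced fan is $\Sigma$. Item~(4) is then just a pointer to the explicit well-forming algorithm of \cite{A13}, which replaces $D$ by a well-formed $D^\prime$ with the same $X_\Sigma$.

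The only genuine work lies in~(2): the cited sources use mildly different conventions (Gale duality versus the exact sequences above, stacky fans versus Cox quotients), so the main obstacle will be the lattice-theoretic bookkeeping needed to certify, on the nose and including torsion, that the two standardness conditions of well-formedness match the triviality of the generic and codimension-one stabilisers of the canonical stack. Once this identification is pinned down, every remaining assertion in~(2), (3) and~(4) is formal.
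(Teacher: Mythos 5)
You should know at the outset that the paper contains no proof of this proposition: it is introduced by the sentence ``The following Proposition collects a few well-known facts that can easily be synthesized from the literature'' and is justified purely by the citations \cite{MR1299003, MR2114820, MR2774310, A13}. So there is no argument of the authors' to diverge from; the comparison is between a bare citation and your synthesis. Your proposal leans on the same four sources for (2)--(4), and your dictionary in (2) --- standardness of $D$ equals faithfulness of $\GG$ on $\CC^m$ (trivial generic stabiliser), standardness of each $D_{\,\widehat{i}}$ equals faithfulness on $(x_i=0)$ (trivial stabilisers in codimension one), and these two properties together with the coarse-space identification characterise the canonical stack --- is exactly the intended synthesis of Definition-Remark~\ref{rem:2} with the description of the canonical stack recalled earlier in \S~\ref{sec:well-form-simpl}; likewise your identification of the chamber with $\Amp X_\Sigma$ via $\Nef X_\Sigma=\bigcap_\sigma\langle D_i\mid i\notin\sigma\rangle$ is the intended one. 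What you add that the paper does not have is the self-contained properness argument for (1): strict convexity gives a strictly positive $\lambda\in\LL_\RR$, Stiemke's alternative gives $M_\RR\cap\RR^{\ast\,m}_{\geq 0}=(0)$, hence $\CC[x_1,\dots,x_m]^\GG=\CC$ and the GIT quotient is projective over a point. That is a clean and essentially correct route to completeness of $\Sigma$.

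Two caveats. First, the equivalence ``$\mathcal{C}$ strictly convex if and only if some $\lambda\in\LL_\RR$ has all $\lambda_i>0$'' requires every $D_i\neq 0$: if, say, $m=3$, $\LL^\ast=\ZZ$ and $D=(0,1,1)$, then $\mathcal{C}=\RR_{\geq 0}$ is strictly convex, yet $x_1\in\CC[x]^\GG$ and $\Sigma$ is the incomplete fan of $\CC\times\PP^1$. This degenerate case falsifies item~(1) as literally stated, so it is an implicit standing hypothesis of the proposition rather than a flaw peculiar to your argument --- but your proof is where the hypothesis is actually used, so you should state it (equivalently: no coordinate $x_i$ is $\GG$-invariant). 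Second, your proof of (1) quotes ``its coarse space is the toric variety $X_\Sigma$'', which is part of items (2)/(3); this is not circular, because the chart-by-chart identification of the coarse GIT quotient with $X_\Sigma$ coming from \cite{MR1299003} does not presuppose completeness, but you should reorder the write-up so that this identification is established before (1) is deduced from it.
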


\paragraph{Charts on GIT quotients}
\label{sec:charts-simpl-toric}

We explain how to set up an explicit atlas of charts on
$X_\omega=[(\CC^m\setminus Z_\omega)/\GG]$, which we use repeatedly in
the calculations needed to validate the entries of
table~\ref{table:1}. Fix a well-formed $D\colon \ZZ^{\ast \, m}\to
\LL^\ast$, choose a basis of $\LL^\ast$, identify $D$ with an integral
$r\times m$ matrix. We have that $\CC^m\setminus Z_\omega$ is a union
of $\GG$-invariant open subsets:
\[
\CC^m\setminus Z_\omega = \bigcup_{\{(i_1,\dots,i_r)\mid \omega \in \langle
D_{i_1},\dots,D_{i_r}\rangle\}} U_{i_1,\dots,i_r}
\quad
\text{where}
\quad
U_{i_1,\dots,i_r}=\{x_{i_1}\neq 0,\dots, x_{i_r}\neq 0\}\subset \CC^m
\]
Let now $V_{i_1,\dots,i_r}=\{x_{i_1}=\cdots = x_{i_r} =1\}\subset
\CC^m$, then
$[U_{i_1,\dots,i_r}/\GG]=[V_{i_1,\dots,i_r}/\Bmu]$ where $\Bmu$ is the finite subgroup
of $\GG$ that fixes $V_{i_1,\dots,i_r}$.  Concretely, $\Bmu$ is the
finite group with character group $A$, the cokernel of the
homomorphism:
\[
D_{i_1,\dots,i_r}=(D_{i_1},\dots,D_{i_r})\colon \ZZ^{\ast\,r} \to \LL^\ast
\]

\paragraph{Complete intersections in toric varieties}
\label{sec:compl-inters-toric}

Consider a well-formed $D\colon \ZZ^{\ast \, m}\to \LL^\ast$ as
above. Fix a chamber of the secondary fan, a stability condition in
it, and let $F=X_\Sigma$ be the corresponding simplicial toric
variety. (In this paper $F$ is always Fano, and we assume
that $\omega =D_1+\dots+D_m$ is the anticanonical divisor of $F$. This
assumption however is irrelevant for the present discussion.) We
consider complete intersections $X\subset F$ of general elements of
linear systems $|L_1|, \dots, |L_c|$ where $L_i\in \LL^\ast$ are
$\GG$-linearised line bundles, that is, line bundles on
$X_\Sigma$.\footnote{More precisely, line bundles on the canonical DM
  stack of $X_\Sigma$.} The space of sections $H^0(F, L_i)$ is the vector subspace
of $\CC[x_1,\dots,x_m]$ with basis consisting
of monomials $x^v\in \CC[x_1,\dots,x_m]$ where $v\in \ZZ^{\ast\,
  m}$ has homogeneity type $L_i$, that is $D(v)=L_i$. Let $f_i\in
H^0(F, L_i)$, then $V(f_1,\dots, f_c)$ is stable under the action of
$\GG$, and we consider the subvariety $X=(V(f_1,\dots, f_c)\setminus
Z_\omega)/\GG\subset F$:
\begin{dfn}
  \label{dfn:3}
  \begin{enumerate}
  \item $X\subset F$ is \emph{quasi-smooth} if either: $V(f_1,\dots, f_c)\subset
    Z_\omega$, or 
\[
V(f_1,\dots,f_c)\setminus Z_\omega \subset \CC^m\setminus Z_\omega 
\]
 is a smooth subvariety of codimension $c$;
\item Suppose that $X\subset F$ is quasi-smooth. We say that $X$ is
  \emph{well-formed} if the following holds: For all toric strata $S\subset
  F$ with nontrivial stabilizer, $S\subset X$ implies $\codim_X S\geq
  2$.
  \end{enumerate}
\end{dfn}

\subsection{Sample computations}
\label{sec:sample-computations}

In the column labelled ``Weights and Line bundles,'' all lines of
table~\ref{table:1}, except those corresponding to families
$\Xsurf{1}{7/3}$, $\Xsurf{5}{5/3}$, $\Xsurf{5}{2/3}$, $\Xsurf{6}{2}$
and $\Xsurf{6}{1}$, list a well-formed weight matrix
\[
D\colon \ZZ^{\ast \, m}\to \LL^\ast = \ZZ^r
\]
for constructing a simplicial toric variety $F$ and, to the right
of it and separated by a vertical line, a list of column vectors
$L_i\in \ZZ^r$, representing line bundles on $F$ such that $X$ is a
complete intersection of general members of the $|L_i|$. The last
column is a list of column vectors in $\ZZ^r$, the generators of
$\Nef F$, which is the chamber of the secondary fan that
contains the stability conditions that give $F$ as GIT quotient. In
all cases it is immediate to verify that the $L_i\in \Nef F$ and that
$-K_F-\Lambda\in \Amp F$ where $\Lambda=\sum L_i$. In particular it
follows from this that $X$ is a Fano variety.

\subsubsection*{Example: family $\Xsurf{1}{10/3}$}
\label{sec:xsurf1,10/3}

As stated in corollary~\ref{cor:1}, a surface $X$ in this family is
either: (i) The blow-up of $\PP(1,1,3)$ at $d=5$ general points; or
(equivalently) (ii) The blow-up of $\Bsurf{1}{16/3}$ at $d=2$ general
points.

According to the table, a surface in this family can be constructed as
a codimension $2$ complete intersection of type $L_1=(2,2)$,
$L_2=(2,2)$ in the (manifestly well-formed) simplicial toric variety
$F$ with weight matrix:
\[
\begin{array}{cccccc}
x_0 & x_1 & x_2 & x_3 & x_4 & x_5 \\
\hline
1 & 1 & 2 & 1 & 0 & 0   \\
0 & 0 & 1 & 2 & 1 & 1  
\end{array}
\]
and $\Nef F=\langle L+2M,2L+M \rangle$, where $L=(1,0)$ and $M=(0,1)$
are the standard basis vectors of $\LL^\ast$. Note that both $L_1$, $L_2$ are ample, 
and $-(K_F+L_1+L_2)\sim L+M$ is ample.

First we examine all the charts of $F$ and verify that $X$ is a
quasi-smooth well-formed complete intersection with
$1\times \frac1{3}(1,1)$ singularities. Finally we calculate
$K_X^2=10/3$.

The chamber is $\langle x_2,x_3\rangle$, so the irrelevant ideal for the given stability condition is
$\Irr=(x_0,x_1,x_2)(x_3,x_4,x_5)$, and the charts are the $U_{ij}$
with $i\leq 2$, $3\leq j$. 

Let us first look at the chart $U_{03}=\{x_0\neq 0, x_3\neq
0\}$. Considering $V_{03}=\{x_0=x_3=1\}\subset \CC^6$ it
is immediate that:
\[
U_{03}=\frac1{2}(0,1,1,1)_{x_1,x_2,x_4,x_5}
\]
the quotient of $V_{03}\cong \CC^4$ with coordinates $x_1,x_2,x_4,x_5$ by
the action of $\Bmu_2$ with weights $(0,1,1,1)$. We see that the
$x_1$-axis $C$ is a curve toric stratum of the \mbox{4-fold} $F$ with stabilizer
$\Bmu_2$ at the generic point. We claim that $C\cap X=
\emptyset$. Indeed $C=\{x_2=x_4=x_5=0\}$ is the toric variety with
weight matrix:
\[
\begin{array}{ccc}
x_0 & x_1 & x_3 \\
\hline
1 & 1 & 1 \\
0 & 0 & 2   
\end{array}
\]
Note, however, that this matrix is not well-formed. Applying the
algorithm in \cite{A13}, we see that $C$, together with the line
bundles $L_{1|C}$, $L_{2|C}$, is the toric variety with well-formed
weight matrix 
\[
\begin{array}{ccc}
x_0 & x_1 & x_3 \\
\hline
1 & 1 & 0 \\
0 & 0 & 1   
\end{array}
\]
and line bundles $L_{1|C}=L_{2|C}=(1,1)$, which is manifestly
the same as $\PP^1$ with $L_{1|C}=L_{2|C}=\oo(1)$. It is clear that
the two restriction maps $H^0(F, L_i)=\langle x_0x_3, x_1x_3\rangle\to
H^0(C, L_{i|C})$ are surjective and thus two general members of $L_1$
and $L_2$ do not intersect anywhere on $C$.

The chart $U_{13}$ is very similar; and the charts $U_{04}$, $U_{05}$,
$U_{14}$, $U_{15}$ are smooth and it is immediate that none of the
strata passing through those charts are contained in the base locus of
$|L_i|$; thus, we only need to look at $U_{23}$.

Considering $V_{23}=\{x_2=x_3=1\}\subset \CC^6$ it
is easy to see that:
\[
U_{23}=\frac1{3}(1,1,1,1)_{x_0,x_1,x_4,x_5}
\]
the quotient of $V_{23}\cong \CC^4$ with coordinates $x_0,x_1,x_4,x_5$ by
the action of $\Bmu_3$ with weights $(1,1,1,1)$. Denote by $f_i\in
H^0(F, L_i)$ general members: the monomials $x_0x_3, x_1x_3,
x_2x_4, x_3x_4$ all appear in $f_i$ with nonzero coefficient, thus the
surface $X$ must contain the origin of this chart, it is quasi-smooth
there, and it has a singularity $1/3(1,1)$ there. This completes the
verification that $X$ is well-formed and has $1\times 1/3(1,1)$
singularities.

We now compute the degree of $X$. The Chow ring of $F$ is generated by
$L=(1,0)$ and $M=(0,1)$ with the relations $L^2(2L+M)=0$,
$(L+2M)M^2=0$ (corresponding to the components $(x_0,x_1,x_2)$,
$(x_3,x_4,x_5)$ of $\Irr$), and, for example, $L^2M^2=1/3$ obtained by looking
at the chart $U_{23}$. From this information, we get that
$L^3M=-(1/2)L^2M^2=-1/6$ and $L^4=-(1/2)L^3M=1/12$ and similarly
$M^4=1/2$, $M^3L=-1/6$ and then it is easy to compute:
\[
 K_X^2=L_1L_2(-K_F-L_1-L_2)^2=(2L+2M)^2(L+M)^2=4(L+M)^4= \\
4\Bigl( \frac1{12}-\frac{4}{6}+\frac{6}{3}-\frac{4}{6}+\frac1{12}\Bigr)=\frac{10}{3} 
\]

\subsection{The surface  \texorpdfstring{$X_{5, 5/3}$}{X(5,5/3)}}
\label{sec:x_5-53}

We construct this surface as the degeneracy locus of an antisymmetric
homomorphism $s\colon E \otimes L \to E^\vee $ where $E$ is a rank 5 (split)
homogeneous vector bundle, and $L$ a line bundle, on a simplicial
toric Fano variety $F$.

Specifically, $F$ is the Fano toric variety with Cox coordinates and weight matrix:
\[
\begin{array}{cccccccccc}
  y_1&y_2&y_3&y_4&y_5&x_1&x_2&x_3&x_4&x_5\\
\hline
1&0&0&0&0&2&1&1&1&1\\
0&1&0&0&0&1&2&1&1&1\\
0&0&1&0&0&1&1&2&1&1\\
0&0&0&1&0&1&1&1&2&1\\
0&0&0&0&1&1&1&1&1&2
\end{array}
\]
The Nef cone of $F$ is the simplicial cone generated by the last 5
vectors of the weight matrix. One can draw the secondary fan; $F$
is covered by 32 charts; it has isolated orbifold singularities
$5\times \frac{1}{2}(1,1,1,1,1)$, $10\times \frac{1}{3}(1,1,2,2,2)$,
$10\times \frac{1}{4}(1,1,1,3,3)$, $5\times \frac{1}{5}(1,1,1,1,4)$, $\frac{1}{6}(1,1,1,1,1)$.

Consider the following line bundles on $F$:
\[
L_1=\oo \begin{pmatrix}2\\2\\3\\2\\3\end{pmatrix},\;
L_2=\oo \begin{pmatrix}2\\3\\2\\2\\3 \end{pmatrix},\; 
L_3=\oo \begin{pmatrix}2\\3\\2\\3\\2 \end{pmatrix},\;
L_4=\oo \begin{pmatrix}3\\2\\2\\3\\2 \end{pmatrix},\;
L_5=\oo \begin{pmatrix}3\\2\\3\\2\\2 \end{pmatrix},\;
\]
and
\[L=\oo\begin{pmatrix}-6\\-6\\-6\\-6\\-6 \end{pmatrix}\]

\textbf{Claim} Writing $E=\oplus_{i=1}^5L_i$, $X\subset F$ is the
degeneracy locus of a general antisymmetric homomorphism
$s\colon E\otimes L \to E^\vee$.

We can take $s$ to be defined by the $5\times 5$ antisymmetric matrix
\[
A=
\begin{pmatrix}
 0&y_1^2y_2y_3y_4^2 & x_1 & x_2 & y_1y_2^2y_4^2y_5 \\
   &0                   &y_1^2y_3^2y_4y_5& x_3 & x_4 \\
   &                     & 0 &y_1y_2y_3^2y_5^2 & x_5\\
   &                     &     & 0                 &y_2^2y_3y_4y_5^2\\
   &                     &    &                     &0
\end{pmatrix}
\]
and the equations of $X$ are the five $4\times 4$ Pfaffians of the matrix
$A$:
\[
\begin{cases}
  -x_3x_5+ x_4y_1y_2y_3^2y_5^2+y_1^2y_2^2y_3^3y_4^2y_5^3=0\\
  -x_5x_2+ x_1y_2^2y_3y_4y_5^2+y_1^2y_2^3y_3^2y_4^2y_5^3=0\\
  -x_2x_4+ x_3y_1y_2^2y_4^2y_5 +y_1^2y_2^3y_3^2y_4^3y_5^2=0\\
  -x_4x_1+ x_5y_1^2y_2y_3y_4^2+y_1^3y_2^2y_3^2y_4^3y_5^2=0\\
  -x_1x_3+ x_2y_1^2y_3^2y_4y_5+y_1^3y_2^2y_3^3y_4^2y_5^2=0
\end{cases}
\]

One can check that $X$ given by these equations avoids all
the singularities of $F$ except $5$ of the points with
$\Bmu_3$-stabilizer, that $X$ is quasismooth and has $5\times \frac{1}{3}(1,1)$
singularities at those points, and that $X$ is nonsingular everywhere
else.

All relevant information about $X$, including the Poincar\'e series
and $K_X^2$, can be obtained from the resolution of $\oo_X$:
\[
(0)\to L \overset{\Pf^\vee}{\to} E\otimes L \overset{A}{\to} E^\vee \overset{\Pf}{\to}
\oo_F \to \oo_X\to  (0)
\]
In particular, the shape of the resolution shows that 
\[
-K_X=\oo_X\begin{pmatrix}1\\1\\1\\1\\1 \end{pmatrix}
\]

\subsection{The family \texorpdfstring{$X_{5, 2/3}$}{X(5,2/3)}}
\label{sec:x_5-23}

We construct a general surface $X$ in this family as a complete
intersection of two hypersurfaces in a $4$-dimensional non-simplicial
toric Fano variety $F$. In \S~\ref{sec:an-embedding-fsubset} we
construct the natural embedding $F\subset \PP(2^5,3^5)$.

\paragraph{Description of $F$} Let us describe the fan of $F$. The
rays of the fan of $F$ are generated by the $10$ vectors
$\rho_{ij}=e_i+e_j\in \ZZ^4$, $i=0,\ldots, 4$, where
$e_1, \dots, e_4\in \ZZ^4$ are the standard basis elements and
$e_0=-e_1-\ldots-e_4$. These vectors are the vertices of a strictly
convex $4$-dimensional lattice polytope $P$ with $10$ facets: $5$
tetrahedra and $5$ octahedra (draw a picture). The fan of $F$ is the
face-fan of $P$. Note that there is an obvious action of the symmetric
group $S_5$ on $F$. Thus $F$ is a nonsimplicial toric Fano
$4$-fold. It is clear from the construction (see below) that $F$ is
the blowdown of the $5$ coordinate divisors $(x_i=0)\subset \PP^4$ on
a blowup $G$ of $\PP^4$ with homogeneous coordinates $x_i$
($i=0,\dots, 4$), along the $10$ coordinate planes
$\Pi_{ij}=(x_i=x_i=0)$.

\paragraph{Description of the family $\Xsurf{5}{5/3}$} Denote by
$D_{ij}\subset F$ the Weil divisor corresponding to the ray
$\rho_{ij}\RR_{\geq 0}$. We will argue that the sheaf
\[L=\oo_F(D_{01}+\cdots +D_{04})
\]
is a line bundle (that is the Weil divisor $D=\sum_{i=1}^4 D_{0i}$ is
Cartier) and that $\Xsurf{5}{2/3}$ is a complete intersection of type
$L^{\oplus \, 2}$ on $F$. 

\smallskip

To verify all of these statements, we examine the local picture at all
the toric charts. Up to $S_5$, there are two types of charts,
corresponding to simplicial and octahedral cones.

In what follows we denote by $y_{ij}$ the Cox coordinates corresponding to the
divisors $D_{ij}$. In terms of these, the monomial basis of $H^0(F,
L)$ is
\begin{equation}
  \label{eq:1}
x_0 = y_{01}y_{02}y_{03}y_{04}, \; x_1 = y_{01}y_{12}y_{13}y_{14},
\;x_2 = y_{02}y_{12}y_{23}y_{24}, \;x_3 = y_{03}y_{13}y_{23}y_{34},
\;x_4 = y_{04}y_{14}y_{24}y_{34}  
\end{equation}

\subsubsection{Simplicial charts}
\label{sec:simplicial-charts}

We denote by $U_0$, ..., $U_4$ the simplicial charts of $F$ where for
example $U_0=(y_{12}=y_{13}=\ldots=y_{34})=1$ is the chart
corresponding to the cone spanned by the vectors $\rho_{01},
\rho_{02}, \rho_{03}, \rho_{04}$. 
These vectors generate a sublattice of index $3$ in $\ZZ^4$ and in
fact 
\[
U_0=\frac1{3}(1,1,1,1)_{y_{01}, y_{02}, y_{03}, y_{04}}
\]
it is clear that $X$ has $5$ isolated $\frac1{3}(1,1)$ singularities,
one in each of these charts.

\subsubsection{Octahedral charts}
\label{sec:octahedral-charts}

We denote by $V_0$, ..., $V_4$ the octahedral charts of $F$ where, for
example, $V_0=(y_{01}=y_{02}=y_{03}=y_{04}=1)$ is the chart
corresponding to the cone $\sigma$ over the octahedron $\left[ \rho_{12}, \ldots
  \rho_{34}\right]$. From the exact sequence
\[
(0)\to \ZZ^2\overset{i}\to \ZZ^6 \overset{\rho}{\to}\ZZ^4
\]
where $\rho=(\rho_{12}, \ldots, \rho_{34})$ and $N_0=\Image
(\rho)=\{x_1, \ldots, x_4\mid x_1+\ldots + x_4 \equiv 0 \pmod{2}\}$, we see that
$V_0=\CC^6_{y_{12},\ldots,y_{34}}/\CC^{\times\,2}\times \Bmu_2$
where $\CC^{\times\,2}$ acts with weights
\[
\begin{pmatrix}
  1 & -1 & 0 & 0 & -1 & 1\\
  1 & 0  & -1 & -1 & 0 & 1 
\end{pmatrix}
\]
and $\Bmu_2$ acts with weights $0$, $0$, $0$, $\frac1{2}$,
$\frac1{2}$, $\frac1{2}$ $\in (\frac1{2}\ZZ)/\ZZ$. It follows that:
\[
V_0=\Spec \CC[y_{12},\ldots,y_{34}]^{\CC^{\times\, 2}\times \Bmu_2}=
\Spec \CC[z_1,\dots, z_8]^{\Bmu_2}
\]
where $z_1=y_{12}y_{13}y_{14}$, $z_2=y_{12}y_{14}y_{24}$,
$z_3=y_{12}y_{23}y_{24}$, $z_4=y_{12}y_{13}y_{23}$, $z_5=y_{13}y_{14}y_{34}$,
$z_6=y_{14}y_{24}y_{34}$, $z_7=y_{23}y_{24}y_{34}$,
$z_8=y_{13}y_{23}y_{34}$ are generators corresponding to the vertices
of a cube in $M_0=\Hom (N_0, \ZZ)$ dual to the octahedron. Finally
\[
V_0=V(z_1z_6-z_2z_5,z_1z_3-z_2z_4,z_1z_8-z_4z_5,z_3z_6-z_2z_7,z_3z_8-z_4z_7,z_6z_8-z_5z_7)\subset \frac1{2}(0,1,0,1,1,0, 1, 0)
\]
where the $6$ relations correspond to the six faces of the cube. It
follows that $V_0\simeq A/\Bmu_2$ where $A\subset \CC^8$ is the affine
cone over $\PP^1\times\PP^1\times\PP^1$ in its Segre embedding. The
$\Bmu_2$-fixed locus $(z_2=z_4=z_5=z_7=0)$ intersects $A$ in the $4$
curves $z_1z_6=z_1z_3=z_1z_8=z_3z_6=z_3z_8=z_6z_8=0$, i.e. the $z_1$-,
$z_3$-, $z_6$-, $z_8$-axes. It follows that $V_0$ has index $2$ at the
origin, where $4$ curves meet where $F$ has generically
$\frac1{2}(1,1,0,0)$ singularities. $L$ is a Cartier divisor at the
origin, thus $X$ also avoids the origin (this was clear from the
start) and $X$ avoids the $1$-dimensional singular stratum in this
chart, which consists of the $4$ curves just mentioned. 

\subsubsection{The embedding $F\subset \PP(2^5, 3^5)$}
\label{sec:an-embedding-fsubset}

Let $x_0,\dots,x_4$ and $z_0,\dots,z_4$ be homogeneous coordinates on
$\PP(2^5,3^5)$. Then with $x_i$ as in equation~\ref{eq:1} and 
\begin{eqnarray*}
z_0 & = y_{12}y_{13}y_{14}y_{23}y_{24}y_{34}\\ 
z_1 & = y_{02}y_{03}y_{04}y_{23}y_{24}y_{34}\\
z_2 & = y_{01}y_{03}y_{04}y_{13}y_{14}y_{34}\\
z_3 & = y_{01}y_{02}y_{04}y_{12}y_{14}y_{24}\\
z_4 & = y_{01}y_{02}y_{03}y_{12}y_{13}y_{23} 
\end{eqnarray*}
$F$ embeds into $\PP(2^5,3^5)$ with 14 equations:
\begin{enumerate}[(1)]
\item $x_0z_0=x_iz_i$ ($i\in \{1,2,3,4\}$), and
\item $z_0z_1=x_2x_3x_4$, $z_0z_2=x_1x_3x_4$, etc ($10$
  equations).\footnote{One can see that there are 35 syzygies between
    these equations, as is typical of codimension~5 Gorenstein ideals
    with 14 generators.}
\end{enumerate}

\subsection{Italian-style constructions}
\label{sec:birat-constr-4}

Here we list a few Italian-style constructions of some of the
surfaces.

\paragraph{$k$-gons} Take a $k$-gon of $(-1)$-curves on a nonsingular del Pezzo
surface $S$ of degree $2\leq k=K_S^2 \leq 6$. Blowing up the vertices
of the $k$-gon and then blowing down the strict transforms of the $k$
$(-1)$-curves---which have now become $(-3)$-curves---one obtains a surface
with $k\times 1/3(1,1)$ singularities and degree $d=k/3$. Hence this
construction gives $\Xsurf{2}{2/3}$, $\Xsurf{3}{1}$, $\Xsurf{4}{4/3}$,
$\Xsurf{5}{5/3}$, $\Xsurf{6}{2}$.

\paragraph{The family $\Xsurf{5}{2/3}$} A surface in this family is
the blow up of the $10=\binom{5}{2}$ points of intersection of pairs
of $5$ general lines in $\PP^2$ followed by contraction of the proper
preimages of the $5$ lines. A floating $(-1)$-curve on this surface can
be seen as follows: choose $5$ of the $10$ points of intersection of
the $5$ general lines in such a way that no three of them are
collinear. The proper transform of the unique conic through these $5$
points is a floating $(-1)$-curve.

\paragraph{The family $\Xsurf{6}{1}$} A surface in this family is the
blow up of the $9$ points of intersections of pairs of lines (one in
each ruling) of a grid of $6=3+3$ lines, three in each ruling, on
$\PP^1\times \PP^1$ followed by contraction of the proper preimages of
the $6$ lines.


\section{Invariants}
\label{sec:invariants}


The main result of this section is proposition~\ref{pro:2} where we
derive an almost exact table of invariants of del Pezzo surfaces with
$\frac1{3}(1,1)$ points from elementary lattice theory and elementary
covering space theory. These methods are surprisingly effective in
producing an almost exact table of invariants and we hope that they
can be useful in other problems of classification of orbifold del
Pezzo surfaces. We use the result in \S~\ref{sec:trees} to cut down on
the cases we need to consider in the proof of theorems~\ref{thm:5}
and~\ref{thm:4}.  We start with a study of the defect invariant
introduced in \S~\ref{sec:invariants_tiny}.

\begin{lem}
  \label{lem:1} Using the notation introduced in \S~\ref{sec:invariants_tiny},
  $k-r/2\leq \sigma \leq k/2$.
\end{lem}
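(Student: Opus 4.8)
The plan is to work entirely with the definitions from \S\ref{sec:invariants_tiny}. Recall that $L=H^2(Y;\ZZ)$ is unimodular of rank $r$, that $N=\langle -3\rangle^{\perp\,k}\subset L$ is the rank-$k$ sublattice spanned by the $k$ disjoint $(-3)$-curves (each of self-intersection $-3$), and that $\overline N$ is its saturation, with $\overline N/N\cong\FF_3^\sigma$. The equivalent description $\sigma=k-\rk\,\Image[N\otimes\FF_3\to L\otimes\FF_3]$ will be the workhorse. First I would reduce both bounds to statements about the reduction map $\varphi\colon N\otimes\FF_3\to L\otimes\FF_3$: by definition its image has rank $k-\sigma$, so its kernel has dimension $\sigma$. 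Thus the task is to bound $\dim\ker\varphi$ from above by $k/2$ and from below by $k-r/2$.

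\textbf{The upper bound $\sigma\leq k/2$.}
Here I would exploit the intersection form on $N$. With respect to the basis of $(-3)$-curve classes, the Gram matrix of $N$ is $-3\,I_k$, so over $\FF_3$ the induced bilinear form on $N\otimes\FF_3$ is \emph{identically zero}. Now the kernel of $\varphi$ consists of classes $\bv\in N\otimes\FF_3$ whose image in $L\otimes\FF_3$ vanishes; since $L$ is unimodular, its reduction $L\otimes\FF_3$ carries a \emph{nondegenerate} symmetric form, and $\varphi$ is an isometry onto its image. The key observation is that $\ker\varphi$ is precisely the radical of the form restricted to $N\otimes\FF_3$ pulled back appropriately—more precisely, an element lies in $\ker\varphi$ iff its pairing against all of $N$ is divisible by $3$, i.e. $\ker\varphi=(3N^\ast\cap N)/3N$ after identifying via the form. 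Since the form on $N$ is $-3\,I_k$, one computes that $N^\ast/N\cong(\ZZ/3)^k$ and that $\overline N/N\cong\ker\varphi$ embeds as a subgroup of $N^\ast/N$ that is \emph{isotropic} for the induced $\QQ/\ZZ$-valued discriminant form. An isotropic subspace of a nondegenerate form of rank $k$ over $\FF_3$ has dimension at most $k/2$, giving $\sigma\leq k/2$. I expect the cleanest route is to verify directly that the discriminant form $q\colon N^\ast/N\to\QQ/\ZZ$ is nondegenerate and that $\overline N/N$ is totally isotropic, then quote the standard bound.

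\textbf{The lower bound $k-r/2\leq\sigma$.}
For this I would bound the rank of the image of $\varphi$ from above by $r/2$, since $\sigma=k-\rk\,\Image\varphi$ gives $\sigma\geq k-r/2$ exactly when $\rk\,\Image\varphi\leq r/2$. The image $\Image\varphi\subset L\otimes\FF_3$ is the span of the reductions of the $(-3)$-curve classes, and I claim it is an isotropic subspace of the nondegenerate form on $L\otimes\FF_3$: indeed for two basis classes $C_i,C_j$ we have $C_i\cdot C_j\equiv 0\pmod 3$ (the diagonal entries are $-3\equiv 0$ and the off-diagonal entries are $0$ because the $(-3)$-curves are disjoint). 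Hence $\Image\varphi$ is totally isotropic in a nondegenerate $\FF_3$-space of dimension $r$, so $\rk\,\Image\varphi\leq r/2$, which yields the lower bound.

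\textbf{Main obstacle.}
The routine parts are the two isotropy computations, which follow immediately from the Gram matrix $-3\,I_k$ and disjointness. The step I expect to require the most care is the precise identification of $\ker\varphi$ (equivalently $\overline N/N$) with a totally isotropic subspace of the \emph{nondegenerate} discriminant form on $N^\ast/N$, so that the ``isotropic $\Rightarrow$ dimension $\leq k/2$'' bound applies cleanly; one must make sure the discriminant form is genuinely nondegenerate (which holds because $\det(-3I_k)=\pm 3^k\neq 0$) and that saturation in the unimodular lattice $L$ translates exactly into isotropy rather than merely into ``contained in the radical.'' Once that dictionary between saturation, reduction mod $3$, and the discriminant form is pinned down, both inequalities drop out of the standard linear-algebra bound on isotropic subspaces.
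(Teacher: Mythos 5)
Your proposal is correct and follows essentially the same route as the paper: the upper bound comes from the total isotropy of $\overline{N}/N$ inside the discriminant group $N^\ast/N\cong\FF_3^k$, and the lower bound from the total isotropy of $\Image[N\otimes\FF_3\to L\otimes\FF_3]$ inside the nondegenerate space $L\otimes\FF_3$ — exactly the paper's two-line argument.

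One intermediate claim in your writeup is false, though your final argument does not rely on it: you assert that $v$ lies in $\ker\varphi$ iff its pairing against all of $N$ is divisible by $3$, and that $\ker\varphi=(3N^\ast\cap N)/3N$. Since $L$ is unimodular, the correct criterion is $v\in 3L$, i.e.\ $\langle v,w\rangle\equiv 0\pmod 3$ for all $w\in L$, not for all $w\in N$; pairing against $N$ is vacuous here because the Gram matrix of $N$ is $-3I_k$, so every element of $N$ pairs with all of $N$ divisibly by $3$. Correspondingly $3N^\ast=N$, so your formula $(3N^\ast\cap N)/3N$ equals $N/3N$, the whole space, rather than $\ker\varphi$. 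The correct identification — which you do state and use afterwards — is $\ker\varphi=(N\cap 3L)/3N\cong\overline{N}/N$, embedded as a totally isotropic subgroup of $N^\ast/N$ for the discriminant form, after which the standard bound on isotropic subspaces gives $\sigma\leq k/2$.
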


\begin{proof}
  $\overline{N}/N\subset N^\ast /N$ is totally isotropic where
  $N^\ast/N$ is endowed with the discriminant quadratic form, hence
  $\sigma = \dim_{\FF_3} \overline{N}/N\leq \frac1{2}\dim_{\FF_3}
  N^\ast /N=\frac{k}{2}$. Also, $\Image [N\otimes \FF_3 \to L\otimes
  \FF_3]$ is totally isotropic, hence it has dimension $\leq r/2$,
  thus the kernel has dimension $\geq k-r/2$.
\end{proof}

\begin{rem}
  \label{rem:9}
  In fact one can do better, but we won't need to do so here. For
  example, if $k=2$, then the discriminant bilinear form
  $A(x,y)=x^2+y^2$ has no isotropic vector, hence $\sigma=0$ in this
  case.
\end{rem}

\begin{lem}
  \label{lem:3}
  $H_1(X^0;\ZZ)\cong \FF_3^\sigma$.
\end{lem}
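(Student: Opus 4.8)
The plan is to give $X^0$ a purely topological description in terms of the minimal resolution $f\colon Y\to X$ and then to read off $H_1(X^0;\ZZ)$ from a Mayer--Vietoris sequence, identifying the connecting map with the intersection form reduced modulo $3$. First I would record the local geometry: each $\frac13(1,1)$ point resolves to a single $(-3)$-curve, so the exceptional locus of $f$ is a disjoint union $E=E_1\sqcup\cdots\sqcup E_k$ with $E_i\cong\PP^1$, $E_i^2=-3$, and $f$ restricts to an isomorphism $Y\setminus E\xrightarrow{\sim}X^0$. Choosing disjoint tubular neighbourhoods $U_i\supset E_i$ and setting $U=\bigsqcup_i U_i$, the boundary $\partial U_i$ is the link $S^3/\Bmu_3$ of $P_i$, a lens space with $H_1(\partial U_i;\ZZ)=\ZZ/3\ZZ$ and $H_2(\partial U_i;\ZZ)=0$. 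Since $Y$ is a smooth rational surface it is simply connected, so $H_1(Y;\ZZ)=0$ and $H_2(Y;\ZZ)=L$.

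Applying Mayer--Vietoris to $Y=(Y\setminus E)\cup U$ with $(Y\setminus E)\cap U\simeq\partial U$ and $U\simeq E\simeq\bigsqcup_i S^2$, the relevant tail collapses, using $H_2(\partial U;\ZZ)=0$, $H_1(U;\ZZ)=0$ and $H_1(Y;\ZZ)=0$, to
\[
H_2(Y;\ZZ)\xrightarrow{\ \beta\ }\bigoplus_{i=1}^k H_1(\partial U_i;\ZZ)\longrightarrow H_1(X^0;\ZZ)\longrightarrow 0 .
\]
Thus $H_1(X^0;\ZZ)\cong\operatorname{coker}\beta$ is a quotient of $\FF_3^{\,k}$, hence automatically an $\FF_3$-vector space; it remains only to compute its dimension.

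The key step is to identify $\beta$. For each $i$ the Mayer--Vietoris connecting map is the composite of the excision/restriction $H_2(Y;\ZZ)\to H_2(U_i,\partial U_i;\ZZ)$ with the boundary map $\partial_i\colon H_2(U_i,\partial U_i;\ZZ)\to H_1(\partial U_i;\ZZ)$ from the long exact sequence of the pair $(U_i,\partial U_i)$. In that sequence $H_2(U_i;\ZZ)=\ZZ\langle[E_i]\rangle$ maps to $H_2(U_i,\partial U_i;\ZZ)\cong\ZZ$ by multiplication by $E_i^2=-3$, which both recovers $H_1(\partial U_i;\ZZ)=\ZZ/3\ZZ$ and exhibits $\partial_i$ as reduction modulo $3$. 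Under the Lefschetz-dual identification $H_2(U_i,\partial U_i;\ZZ)\cong H^2(U_i;\ZZ)$ the restriction sends a class $c$ to $c\cdot E_i$, so altogether $\beta(c)=(c\cdot E_i\bmod 3)_i$. I expect this to be the only genuinely delicate point of the argument: one must be careful to extract the correct modulus $3$ and the intersection pairing from the disk-bundle model of $E_i$, and to check that the Mayer--Vietoris connecting homomorphism really is the stated composite.

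Finally I would finish by $\FF_3$-linear algebra. Because $L$ is unimodular, the induced form on $V=L\otimes\FF_3$ is nondegenerate, and $\beta$ factors as $L\twoheadrightarrow V\xrightarrow{\bar\beta}\FF_3^{\,k}$ with $\bar\beta(v)=(\langle v,\overline{E_i}\rangle)_i$, so that $\operatorname{coker}\beta=\operatorname{coker}\bar\beta$. Via the form isomorphism $V\cong V^\vee$, the map $\bar\beta$ is the transpose of $\iota\colon\FF_3^{\,k}\to V$, $e_i\mapsto\overline{E_i}$, whose image is exactly $\Image[N\otimes\FF_3\to L\otimes\FF_3]$. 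Hence $\rk\bar\beta=\rk\iota=k-\sigma$ by the definition of $\sigma$, and therefore $\dim_{\FF_3}\operatorname{coker}\bar\beta=\sigma$, giving $H_1(X^0;\ZZ)\cong\FF_3^{\sigma}$. Once the connecting map has been pinned down, both the Mayer--Vietoris bookkeeping and this concluding rank computation are routine.
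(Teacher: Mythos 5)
Your proof is correct, and it reaches the paper's conclusion by a more hands-on route. The paper's own argument is a four-term exact sequence: it takes the long exact sequence of compactly supported cohomology for the pair $(Y,E)$, namely $H^2(Y;\ZZ)\to H^2(E;\ZZ)\to H^3_c(Y\setminus E;\ZZ)\to H^3(Y;\ZZ)=(0)$, identifies $H^3_c(Y\setminus E;\ZZ)\cong H_1(X^0;\ZZ)$ by Poincar\'e duality on the open smooth surface, and reads off $H_1(X^0;\ZZ)$ as the cokernel of the restriction $L\to N^\ast$, which is $\overline{N}/N\cong\FF_3^\sigma$ by the saturation form of the definition of $\sigma$ (this last lattice-theoretic identification, via $\mathrm{Ext}^1(L/N,\ZZ)\cong\mathrm{Tors}(L/N)$, is left implicit in the paper). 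You replace the compact-support sequence by Mayer--Vietoris on $Y=X^0\cup U$ with $U$ a union of tubular neighbourhoods, which forces you to do two things the paper never has to: identify the connecting homomorphism as intersection-with-$E_i$ reduced mod $3$ (you do this correctly, via $H_2(Y)\to H_2(U_i,\partial U_i)\to H_1(\partial U_i)$ and the disk-bundle model), and then run the $\FF_3$-linear algebra with the transpose trick to convert $\dim\operatorname{coker}$ into $k-\rk\Image[N\otimes\FF_3\to L\otimes\FF_3]$, i.e.\ into the paper's \emph{equivalent} formula for $\sigma$ rather than the saturation formula. What your version buys is elementarity and geometric transparency: the $3$-torsion is visible in the lens-space links $S^3/\Bmu_3$, no compactly supported cohomology is needed, and the equivalence of the two descriptions of $\sigma$ is effectively re-proved along the way. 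What the paper's version buys is brevity: the map $L\to N^\ast$ is just restriction of cohomology classes, so there is no connecting homomorphism to pin down, and the whole computation stays integral until the final step. Both arguments rest on the same inputs ($Y$ smooth rational, hence simply connected with $L=H^2(Y;\ZZ)$ unimodular, and each singular point resolving to a single $(-3)$-curve), and your cokernel agrees with the paper's because $3N^\ast=r(N)\subset r(L)$, so reducing the target mod $3$ changes nothing.
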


\begin{proof}
  Denote by $E=\cup_{i=1}^kE_i\subset Y$ the exceptional divisor of
  the minimal resolution morphism $Y\to X$, and note that of course
  $X^0=Y\setminus E$. Because $Y\setminus E$ is smooth, the Poincar\'e homomorphism
  $H^i_c(Y\setminus E; \ZZ)\to H_{4-i} (Y;\ZZ)$ is an isomorphism. The long exact sequence
  for compactly supported cohomology fits into a commutative diagram:
\[
\xymatrix{
H^2(Y;\ZZ)\ar[r]\ar@{=}[d]&H^2(E; \ZZ)\ar[r]\ar@{=}[d]&H^3_c (Y\setminus E;\ZZ)\ar[r]\ar@{=}[d]
&H^3 (Y;\ZZ)=(0)\ar@{=}[d]\\
L \ar[r] & N^\star \ar[r] & \FF_3^\sigma \ar[r]& (0) \\}
\]
\end{proof}

The following is the main result of this section.

\begin{pro}
  \phantomsection \label{pro:2} 
  $k\leq 6$ and moreover:
  \begin{enumerate}[(1)]
  \item If $k=1$ then $K^2\equiv 1/3 \pmod{\ZZ}$ and $1/3\leq K^2\leq
    25/3$;
  \item If $k=2$ then $K^2\equiv 2/3 \pmod{\ZZ}$ and $2/3\leq K^2\leq
    20/3$;
  \item If $k=3$ then $K^2\equiv 0 \pmod{\ZZ}$ and $1\leq K^2\leq 5$;
  \item If $k=4$ then $K^2\equiv 1/3 \pmod{\ZZ}$ and $1/3\leq K^2\leq
    10/3$;
  \item If $k=5$ then $K^2\equiv 2/3 \pmod{\ZZ}$ and $2/3\leq K^2\leq
    8/3$;
  \item If $k=6$ then $K^2\equiv 0 \pmod{\ZZ}$ and $1 \leq K^2\leq 2$.
  \end{enumerate}
\end{pro}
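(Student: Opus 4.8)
The plan is to reduce everything to one numerical identity and then feed it two kinds of input: the lattice theory of Lemma~\ref{lem:1} and Lemma~\ref{lem:3}, and a covering-space construction. First I would record, from the Riemann--Roch and Noether formulas of Remark~\ref{rem:6}, that $K_X^2 = 12 - n - \frac{5k}{3}$ with $n = e(X^0) = 2 + \rho(X) - k = 2 + r - 2k$, so that
\[
K_X^2 = 10 - r + \tfrac{k}{3}.
\]
Invariant~(ii) gives the congruence $K_X^2 \equiv \tfrac{k}{3} \pmod{\ZZ}$, and together with $K_X^2 > 0$ this already yields every stated \emph{lower} bound for free, since $\tfrac13,\tfrac23,1,\tfrac13,\tfrac23,1$ are exactly the smallest positive rationals in the respective congruence classes. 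Thus the real content is the bound $k\le 6$ together with the \emph{upper} bounds, i.e.\ lower bounds on $r$ (equivalently on $n$).

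Next I would extract the lattice input. Lemma~\ref{lem:1} gives $r \ge 2k - 2\sigma$, hence $n \ge 2 - 2\sigma$. In particular, \emph{if} $\sigma = 0$ then $r\ge 2k$, so $K_X^2 \le 10 - \tfrac{5k}{3}$, and positivity forces $k\le 5$; moreover in that case the values $\tfrac{25}{3},\tfrac{20}{3},5,\tfrac{10}{3},\tfrac{5}{3}$ match (and for $k=5$ even beat) the stated upper bounds. Separately, the generic semipositivity bound $\widehat{c}_2(X)\ge 0$ of Remark~\ref{rem:7} reads $K_X^2 \le 12 - \tfrac{4k}{3}$, which already gives the crude outer bound $k\le 8$. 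So the two tasks left are to show $\sigma = 0$ whenever $k\le 5$, to handle $k=6$, and to exclude $k=7,8$.

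The decisive tool is covering space theory. When $\sigma\ge 1$, Lemma~\ref{lem:3} gives $H_1(X^0;\FF_3)\ne(0)$, so $X^0$ carries a connected $\ZZ/3\ZZ$-cover $\pi^0\colon X^0_{(3)}\to X^0$; since $X$ is rational, $\pi_1(X)=(0)$ and $H_1(X^0)$ is generated by the loops $\gamma_i$ about the singular points, whose classes record the local monodromy. I would extend $\pi^0$ to a finite cover $\pi\colon X_{(3)}\to X$: over a trivial-monodromy point the cover splits into three $\tfrac13(1,1)$ points, while over a nontrivial-monodromy point it unwinds the local $\ZZ/3\ZZ$ and fills in a single smooth point. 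As $K_{X_{(3)}}=\pi^*K_X$ is anti-ample, $X_{(3)}$ is again a del Pezzo surface with $\tfrac13(1,1)$ points, now $3a$ of them, where $a$ is the number of trivial-monodromy points. Writing $b=k-a$ for the unwound points, multiplicativity of $e$ gives $e\big(X_{(3)}\setminus\Sing X_{(3)}\big)=3\,e(X^0)+b$ and $K_{X_{(3)}}^2=3K_X^2$; substituting into the Noether formula for both surfaces and cancelling yields the clean identity $b=6$. Hence $\sigma\ge 1\Rightarrow k\ge 6$, so $\sigma=0$ whenever $k\le 5$, which discharges the conditional upper bounds of the previous paragraph. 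A dual-space count---each hyperplane of $\FF_3^\sigma$ must avoid exactly $6$ of the monodromy classes---then forces $\sigma\le 2$ and shows $\sigma=2$ needs $k\ge 8$; in particular $k=6$ forces $\sigma\le 1$, so $r\ge 10$ and $K_X^2\le 2$.

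It remains to exclude $k=7,8$, and this is where I expect the main obstacle. For $k=8$ one has $\sigma=2$, and iterating the cover twice produces a \emph{smooth} del Pezzo surface of degree $9K_X^2=6$ with a $(\ZZ/3\ZZ)^2$-action whose quotient is $X$; I would rule this out by analysing $(\ZZ/3\ZZ)^2$-actions on a degree-$6$ del Pezzo and observing that the points which must carry order-$3$ stabilizers are forced to have the \emph{whole} group as stabilizer, producing order-$9$ rather than $\tfrac13(1,1)$ singularities. The genuinely delicate case is $k=7$: here $\sigma=1$ and $X_{(3)}$ is a del Pezzo with three $\tfrac13(1,1)$ points and $K^2\in\{1,4\}$ admitting a $\ZZ/3\ZZ$-action that cyclically permutes the three singular points and fixes six further points of type $\tfrac13(1,1)$. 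The topological and holomorphic Lefschetz formulas constrain this action (forcing, for instance, a $3{+}3$ split of the fixed-point types) but do not by themselves exclude it; the hard part will be to rule out this symmetric configuration, which I would attack by an equivariant lattice and fixed-point analysis of the deck action on the minimal resolution. Once $k\le 6$ is secured, the upper bounds in all six cases follow from the inequalities assembled above.
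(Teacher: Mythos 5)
Your setup and your covering-space machinery for $k\leq 6$ are correct, and in one respect sharper than the paper's own argument. The paper only uses the crude consequence of lemmas~\ref{lem:1} and~\ref{lem:3} that a connected cover of degree $3$ or $9$, \'etale over $X^0$, would be a del Pezzo surface with $\frac1{3}(1,1)$ points of impossibly large degree ($23$, $12$, \dots); your Noether-formula computation on the cover instead gives the exact identity $b=6$ (every connected $\ZZ/3\ZZ$-cover unwinds precisely six singular points), and the hyperplane count in $\FF_3^\sigma$ then yields $\sigma=0$ for $k\leq 5$, $\sigma\leq 1$ for $k\leq 7$, and $\sigma\leq 2$ in general, hence all the stated upper bounds for $k\leq 6$. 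Two steps you assert should be written out, but both check: the double count of pairs $(\gamma_i,H)$ over hyperplanes $H\subset\FF_3^\sigma$ gives $\#\{i:\gamma_i\neq 0\}=9-3^{2-\sigma}$, which forces $\sigma\leq 2$ with exactly $6$ (resp.\ $8$) nontrivial classes when $\sigma=1$ (resp.\ $2$); and remark~\ref{rem:6} does apply to the cover, since the cover is again a del Pezzo surface with only $\frac1{3}(1,1)$ points.

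The genuine gap is exactly where you flag it: $k=7$ and $k=8$ are never excluded, so the proposition is not proved. For $k=8$ you propose, without carrying it out, a classification of $(\ZZ/3\ZZ)^2$-actions on a sextic del Pezzo surface, resting on an unproved claim about stabilizers; for $k=7$ you concede that your constraints do not exclude the configuration---and that case really is delicate along your route, because the putative cover is a degree-$1$ del Pezzo with three $\frac1{3}(1,1)$ points, i.e.\ the surface $\Xsurf{3}{1}$, which exists, so one would have to rule out a specific $\ZZ/3\ZZ$-action on it. The missing ingredient is one you never invoke and which the paper uses from the outset: Riemann--Roch (remark~\ref{rem:6}) gives $h^0(X,-K_X)=1+K^2-\frac{k}{3}\geq 0$. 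With your own inequalities this closes both cases in one line. For $k=7$: $\sigma=2$ would require eight nontrivial monodromy classes, so $\sigma\leq 1$, hence $r\geq 2k-2\sigma\geq 12$ and $K^2=10-r+\frac{7}{3}\leq\frac1{3}$, forcing $K^2=\frac1{3}$ and $h^0(X,-K_X)=-1<0$, a contradiction. For $k=8$: $\widehat{c}_2\geq 0$ and the congruence already force $K^2=\frac{2}{3}$, and again $h^0(X,-K_X)=-1<0$. This is precisely how the paper disposes of $k\in\{7,8\}$ (it deduces $k\leq 7$ and kills $k=7$, $K^2=\frac1{3}$ from $h^0\geq 0$, reserving the defect-and-cover argument for the intermediate cases); once you add this single constraint, your argument is complete and, if anything, cleaner than the paper's.
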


\begin{rem}
  \label{rem:8}
  It follows from the proof of theorems~\ref{thm:5} and~\ref{thm:4} that the
  possibilities $k=2, K^2=20/3$; $k=4, K^2=10/3$; $k=5, K^2=8/3$ do
  not actually occur. 
\end{rem}

\begin{proof}[Proof of proposition~\ref{pro:2}]
  By \cite[1.8~Corollary]{MR1610249} the orbi-tangent bundle is
  generically semi-positive, and then by \cite[Chapter~10]{MR1225842}
  $\widehat{c}_2=n+k/3 \geq 0$. It follows from this that $K^2=12 -n
  -\frac{5k}{3}\leq 12-\frac{4k}{3}$. By using $K^2\equiv
  k/3\pmod{\ZZ}$ and:
\[
0<K^2\leq 12-\frac{4k}{3}
\quad
\text{and}
\quad
h^0(X,-K)=1+K^2-k/3\geq 0
\]
we immediately conclude that $k\leq 7$. Indeed, if $k=8$ the first set
of inequalities forces $K^2= 2/3$ but this would imply
$h^0(X,-K)=1+K^2-8/3=1+2/3-8/3=-1<0$, a contradiction. Similarly, when
$k=7$ the first inequality gives $1/3\leq K^2 \leq 7/3$, but
$K^2=1/3$ does not occur because it would imply
$h^0(X, -K)=1+1/3-7/3=-1$, again a contradiction. With a bit more work
we can exclude a few more cases: if $k=1$, then $K^2=31/3$ implies
$r=0$, $K^2=28/3$ implies $r=1$ and both are impossible because $r=k+\rho(X)>k$. Similarly,
$k=2$ and $K^2=26/3$ implies $r=2$, impossible; $k=3$ and $K^2=8$
implies $r=3$, also impossible. Thus $k\leq 7$ and we are left with the following
possibilities:
  \begin{enumerate}[(1)]
  \item If $k=1$ then $K^2\equiv 1/3 \pmod{\ZZ}$ and $1/3\leq K^2\leq
    25/3$;
  \item If $k=2$ then $K^2\equiv 2/3 \pmod{\ZZ}$ and $2/3\leq K^2\leq
    23/3$;
  \item If $k=3$ then $K^2\equiv 0 \pmod{\ZZ}$ and $1\leq K^2\leq 7$;
  \item If $k=4$ then $K^2\equiv 1/3 \pmod{\ZZ}$ and $1/3\leq K^2\leq
    19/3$;
  \item If $k=5$ then $K^2\equiv 2/3 \pmod{\ZZ}$ and $2/3\leq K^2\leq
    14/3$;
  \item If $k=6$ then $K^2\equiv 0 \pmod{\ZZ}$ and $1 \leq K^2\leq 4$;
  \item If $k=7$ then $K^2\equiv 1/3 \pmod{\ZZ}$ and $4/3\leq K^2\leq 7/3$.
 \end{enumerate}
 We are still quite some way from proving what we need. We exclude the
 remaining possibilities by studying the invariant $\sigma$. The
 key observation is that, by lemma~\ref{lem:1}, we have that $\sigma \geq k-r/2$ so,
 for example, if $k=2$ and $K^2=23/3$, we must have $r=3$ and then
 $\sigma >0$. It is easy to see that this case does not occur:
 by lemma~\ref{lem:3} $H_1(X^0;\ZZ)\cong \FF_3^\sigma$, so by
 covering space theory there is a $3$-to-$1$ covering $Y\to
 X$, \'etale above $X^0$, from a surface $Y$, necessarily a del Pezzo
 surface, with $1/3(1,1)$ points and degree $K_Y^2=3\times \frac{23}{3}=23$
 and we already know that such a surface does not exist.

As another example, $k=7$, $K^2=4/3$ implies $\sigma\geq 2$ so there is a
$9$-to-$1$ cover $Y\to X$ from a del Pezzo surface $Y$ with $1/3(1,1)$
points and $K_Y^2=12$ and we know that such a surface does not
exist.

 In table~\ref{tab:defective} we summarise the cases where we can
 definitely conclude $\sigma >0$. All but two are excluded at once by
 the same method (the other two cases actually occur) and the result
 follows.

\begin{table}[h,ht]
  \centering
  \begin{tabular}{c|c|c|c|c}
    $k$ & $K^2$ & $r$ & $\sigma$ & Occurs \\
\hline
    2    &23/3    & 3 & $>0$& No \\
    3    &6         & 5 & $>0$& No \\
    3    &7         & 4 & $>0$& No \\
    4    &13/3    & 7 & $>0$& No \\
    4    &16/3    & 6 & $>0$& No \\
    4    &19/3    & 5 & $>1$& No \\
    5    &8/3      & 9 & $>0$& No \\
    5    &11/3    & 8 & $>0$& No \\
    5    &14/3    & 7 & $>1$& No \\
    6    &1         &11& $>0$ & Yes \\
    6    &2         &10& $>0$ & Yes \\
    6    &3         &9  & $>1$ & No \\
    7    &4/3      &11& $>1$ & No \\
    7    &7/3      &10& $>1$ & No
  \end{tabular}
  \caption{Necessarily defective possibilities}
  \label{tab:defective}
\end{table}
All other possibilities are excluded by the same method, except
$k=5$, $K^2=8/3$, $\sigma\geq 1$: this possibility is not excluded at
this point, and it is not excluded by the statement of
proposition~\ref{pro:2}. Table~\ref{tab:defective} states that it does
not occur, but this fact will only follow from the proof of
theorems~\ref{thm:5} and~\ref{thm:4} in \S~\ref{sec:trees}.

\end{proof}


\section{MMP}
\label{sec:mmp}


In our proof of theorem~\ref{thm:5} in \S~\ref{sec:trees} we
systematically use the following elementary result, which we state
without proof. Analogous statements for surfaces with canonical
singularities can be found in \cite{MR787190, MR868434}.

\begin{thm}
  \label{thm:2}
  Let $X$ be a projective surface having $k\times \frac1{3}(1,1)$,
  $n_2\times A_2$, and $n_1\times A_1$ singularities. 

  Assume that $k+2n_2 + n_1\leq 6$. 

  Let $f\colon X \to X_1$ be an extremal contraction. Then exactly one
  of the following holds:

  (E) $f\colon (X, E)\to (X_1, P)$ is a divisorial contraction. Denote
  by $Y\to X$ and $Y_1\to X_1$ the minimal resolutions, and
  $E^\prime \subset Y$ the proper transform of the exceptional
  curve. Then $E^\prime\subset Y$ is a $(-1)$-curve meeting
  transversely at most one exceptional curve of $Y\to X$ above each
  singularity, and one of the following holds:
\begin{itemize}
\item[(E.1)] $E$ is contained in the nonsingular locus. Then $E$ is a
  $(-1)$-curve and we call it a \emph{floating} $(-1)$-curve;
\item[(E.2)] (A1 contraction) $E$ contains one
  $A_1$-singularity, $P\in X_1$ is a nonsingular point;
\item[(E.3)] (A2 contraction) $E$ contains one
  $A_2$-singularity, $P\in X_1$ is a nonsingular point;
\item[(E.4)] $E$ contains one $\frac1{3}(1,1)$-singularity, $P\in X_1$
  is a $A_1$-point;
\item[(E.5)] $E$ contains one $\frac1{3}(1,1)$-singularity and one
  $A_1$ singularity, $P\in X_1$ is a nonsingular point;
\item[(E.6)] $E$ contains two $\frac1{3}(1,1)$-singularities, $P\in
  X_1$ is an $A_2$-point.
\end{itemize}

(C) $X_1=\PP^1$, that is, $f$ is generically a conic bundle. Denote by
$F\subset X$ a special fibre of $f$, and by $Y\to X$ and $Y_1\to X_1$ the
minimal resolutions and $F^\prime \subset Y$ the proper transform of
$F$. Then $F^\prime$ is a $(-1)$-curve and one of
the following holds:
\begin{itemize}
\item[(C.1)] $F$ contains two $A_1$-singularities, and $F^\prime$
  meets each of the $(-2)$-curves transversely;
\item[(C.2)] $F$ contains one $\frac1{3}(1,1)$-singularity and one
  $A_2$ singularity, and $F^\prime$ meets the $(-3)$-curve and one of
  the $(-2)$-curves transversely.
\end{itemize}

(D) $X_1=\{\text{pt}\}$ is a point, that is, $X$ is a del Pezzo surface
of Picard rank one, and $X$ is one of the following surfaces:
\begin{itemize}
\item[(D.1)] $\PP^2$;
\item[(D.2)] $\PP(1,1,2)$ (this surface has exactly one $A_1$ singular point);
\item[(D.3)] $\PP(1,2,3)$ (this surface has exactly one $A_1$ and one $A_2$ singularities);
\item[(D.4)] $\PP^2/\mu_3$ where $\mu_3$ acts with weights
  $1,\omega, \omega^2$. This surface has exactly $3\times A_2$
  singularities;\footnote{$X$ is the toric surface obtained by blowing
    up $3$ vertices on the hexagon of lines of a degree $6$
    nonsingular del Pezzo surface.}
\item[(D.5)] $\PP(1,1,3)$.
\end{itemize} \qed
\end{thm}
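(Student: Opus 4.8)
The plan is to feed the Minimal Model Program for surfaces with log terminal singularities into an explicit analysis on the minimal resolution. Since every cyclic quotient singularity is log terminal, the cone and contraction theorems apply to $X$, so a $K_X$-negative extremal contraction $f\colon X\to X_1$ falls into exactly one of three types according to $\dim X_1$: birational with $\dim X_1=2$ (case (E)); fibred over a curve with $\dim X_1=1$, which must be $\PP^1$ since $X$ is rational (case (C)); or $\dim X_1=0$, forcing $\rho(X)=1$ (case (D)). This trichotomy is the starting point, and the content of the theorem is the fine local structure of each case, which I would read off on the minimal resolution $g\colon Y\to X$.

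First I would record the discrepancies: an $A_1$- or $A_2$-point resolves into $(-2)$-curves of discrepancy $0$, while each $\frac1{3}(1,1)$-point gives a single $(-3)$-curve $C_j$ of discrepancy $-\tfrac13$, so $g^\ast K_X=K_Y+\tfrac13\sum_{j=1}^k C_j$. For a divisorial contraction with irreducible exceptional curve $E$ and proper transform $E'\subset Y$, the projection formula gives $K_X\cdot E=K_Y\cdot E'+\tfrac13\sum_j C_j\cdot E'$; as $-K_X$ is $f$-ample and $C_j\cdot E'\ge 0$, this yields $K_Y\cdot E'<0$. The key step is to upgrade this to $E'^2=-1$. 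Here I would use extremality, $\rho(X/X_1)=1$: the birational morphism $\pi\colon Y\to Y_1$ to the minimal resolution of $X_1$ (which exists because every resolution of a surface dominates the minimal one) contracts exactly the cluster consisting of $E'$ together with the exceptional curves over the singular points of $X$ lying on $E$, down to the exceptional configuration of $P\in X_1$. A Picard-number count $\rho(Y/Y_1)=1+N-N_1$, where $N=k+2n_2+n_1$ and $N_1$ count the exceptional curves of $g$ and $g_1$, pins down the number of blow-downs, and together with $K_Y\cdot E'<0$ and the negative-definiteness of the $g$-exceptional lattice forces $E'$ to be a $(-1)$-curve.

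The heart of the matter is that $P\in X_1$ must again be a cyclic quotient singularity, so its resolution graph is a \emph{chain}. This forbids $E'$ from meeting three or more of the $(-2)/(-3)$-curves, or both curves of one $A_2$-chain (which would create a cycle), giving ``at most one exceptional curve above each singularity.'' The clean organizing computation is then the correction formula $E^2=E'^2+\sum(\text{local contributions})$, where a transverse meeting with the $(-3)$-curve of a $\frac13(1,1)$-point contributes $\tfrac13$, with the $(-2)$-curve of an $A_1$-point contributes $\tfrac12$, and with the end of an $A_2$-chain contributes $\tfrac23$ (each obtained by solving $g^\ast E\cdot(\text{exceptional})=0$). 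Thus $E^2=-1+\Sigma$ with $\Sigma$ a sum of such terms: the admissible chains with $\Sigma<1$ give $E^2<0$ and are the six genuine divisorial contractions, namely $\Sigma=0,\tfrac12,\tfrac23,\tfrac13,\tfrac13{+}\tfrac13,\tfrac13{+}\tfrac12$, matching (E.1)--(E.6) once the cluster is contracted to identify the type of $P$; the two chains with $\Sigma=1$, namely two $A_1$'s ($\tfrac12+\tfrac12$) and one $\frac13(1,1)$ with one $A_2$ ($\tfrac13+\tfrac23$), give $E^2=0$ and are precisely the special fibres (C.1), (C.2), while $\Sigma>1$ cannot occur for a contracted curve. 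For case (C) the same bookkeeping applied to the general fibre and its degenerations reproduces this.

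Finally, for case (D) I would observe that $Y$ is a smooth weak del Pezzo surface of Picard rank $1+N$ with $N\le 6$, hence of degree $\ge 3$, and classify the configurations of $(-2)/(-3)$-curves that contract to a rank-one surface; all five surfaces that arise are toric, reducing this to a short toric enumeration (with the bound $N\le 6$ sharp, attained by $\PP^2/\mu_3$ carrying $3\times A_2$). The main obstacle is not any single estimate but the \emph{completeness} and book-keeping of the enumeration on $Y$: one must list every admissible chain and every rank-one configuration and correctly match each contracted cluster to the singularity type of $P$. The hypothesis $k+2n_2+n_1\le 6$ is exactly what keeps this enumeration finite and rules out clusters whose contraction would leave the cyclic-quotient (log terminal) world.
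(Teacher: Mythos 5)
The paper itself states theorem~\ref{thm:2} \emph{without proof} (citing \cite{MR787190, MR868434} for analogous statements in the canonical case), so there is no internal proof to compare against; your proposal must stand on its own. Its skeleton for cases (E) and (C) is right and essentially completable: the MMP trichotomy, the discrepancy formula $g^\ast K_X=K_Y+\tfrac13\sum C_j$, the blow-down argument forcing $E'^2=-1$, and the correction terms $\tfrac13,\tfrac12,\tfrac23$ with $E^2=-1+\Sigma$ all check out. But two steps do not close as written. First, your key structural claim --- ``$P\in X_1$ must again be a cyclic quotient singularity, so its resolution graph is a chain'' --- is circular: what the MMP gives a priori is only that $X_1$ is log terminal, i.e.\ $P$ is a quotient singularity, whose graph may be a star-shaped tree (dihedral type, etc.), not necessarily a chain. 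The tool that actually excludes $E'$ meeting two curves of one $A_2$-chain, tangencies, and three or more branches is negative definiteness of the contracted cluster, equivalently the requirement $(g^\ast E)^2<0$; your $\Sigma$-formula delivers exactly this (e.g.\ meeting both curves of an $A_2$ gives $\Sigma=2$, three branches give $\Sigma\geq 1$), so the fix is available to you, but the logic of your write-up runs the wrong way. Second, your $\Sigma=1$ enumeration is incomplete: besides $\tfrac12+\tfrac12$ and $\tfrac13+\tfrac23$ there is $\tfrac13+\tfrac13+\tfrac13$, a $(-1)$-curve meeting three $(-3)$-curves. This configuration is neither (C.1) nor (C.2) and is not excluded by any chain condition in case (C), where no singular point $P$ exists; it must be killed by the observation that then $K_X\cdot E=-1+3\cdot\tfrac13=0$, so $E$ does not span a $K$-negative ray (equivalently, the putative fibre class has $K_Y\cdot(\text{fibre})=0\neq -2$). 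That argument is absent from your proposal.

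Case (D) is the genuine gap. Your claim that $Y$ is a smooth \emph{weak del Pezzo} surface is false as soon as $k\geq 1$: already for $X=\PP(1,1,3)$, which is item (D.5), the minimal resolution is $Y=\FF_3$ and $-K_Y$ has degree $-1$ on the $(-3)$-section, so $-K_Y$ is not nef. Only the degree bound $K_Y^2=9-N\geq 3$ survives (from Noether's formula), and it does not by itself classify anything. Moreover, ``all five surfaces that arise are toric, reducing this to a short toric enumeration'' is circular: toricity is known only \emph{after} the list is established. The substantive content of (D) is completeness --- ruling out rank-one surfaces with baskets such as $2\times A_1$, a single $A_2$, $\frac1{3}(1,1)+A_1$, or $2\times\frac1{3}(1,1)$ --- and this requires real work: for the Gorenstein baskets, an argument that the $(-2)$-curves span a sub-root-system of the appropriate root lattice of rank $\rho(Y)-1$ (which excludes, e.g., $3A_1$ in degree $6$ and admits $3A_2$ in degree $3$); for $k\geq 1$, a direct analysis of which negative curves can coexist on blow-ups of $\FF_n$ (which shows, e.g., that no $\rho=1$ surface carries $\frac1{3}(1,1)+A_1$). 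Alternatively one can cite the known classification of rank-one log del Pezzo surfaces (De-Qi Zhang's papers referenced in the introduction), but some such input is needed; as it stands, part (D) of your proof is an assertion, not an argument.
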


\begin{rem}
  Consider the class of projective surfaces $X$ be having
  $k\times \frac1{3}(1,1)$, $n_2\times A_2$, and $n_1\times A_1$
  singularities and $k+2n_2+n_1\leq 6$. It follows from the previous
  statement that a MMP starting from a surface in the class only
  involves surfaces in the class.
\end{rem}

\paragraph{The directed minimal model program}

In the proof of theorems~\ref{thm:5} and~\ref{thm:4} in the following
section, we run the MMP starting with a del Pezzo surface with
$\frac1{3}(1,1)$ points.

In all cases, we perform extremal contractions in the order that they
are listed in theorem~\ref{thm:2} above: that is, we first contract
all the floating $(-1)$-curves, then we contract a ray of type (E.2) if
available, or else one of type (E.3), etc.

We call this the \emph{directed MMP}.

\begin{lem} \label{rmk:curves on Y} Let $X$ be a del Pezzo surface
  with $k\geq 1$ $\frac1{3}(1,1)$ singular points. Assume that $X$
  contains no floating $(-1)$-curves. Denote by 
\[
X=X_0\overset{\varphi_0}{\longrightarrow} \ldots \longrightarrow
X_{i-1}\overset{\varphi_{i-1}}{\longrightarrow} X_i \longrightarrow \ldots
\]
the contractions and surfaces occurring in a MMP for $X$ (not
necessarily directed). 
\begin{itemize}
\item[(1)] All surfaces $X_i$ are del Pezzo surfaces.
\item[(2)] Denote by $f_i:Y_i\rightarrow
X_i$ the minimal resolution of $X_i$ and let $C\subset Y_i$ be a
(reduced and irreducible) curve with negative self-intersection
$C^2=-m$. Then:
\begin{itemize}
\item[(2.1)] if $C$ is $f_i$-exceptional, then $m=2$ or $3$,
\item[(2.2)] if $C$ is not $f_i$-exceptional, then $m=1$ and $C$
  intersects at least one $f_i$-exceptional curve. In particular, none
  of the surfaces $X_i$ contain a floating $(-1)$-curve. 
\end{itemize}
\end{itemize}
\end{lem}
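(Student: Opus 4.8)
The plan is to prove the three assertions of Lemma~\ref{rmk:curves on Y} by following the MMP step by step and invoking Theorem~\ref{thm:2} at each stage to control what singularities and negative curves can appear. First I would establish~(1): since $X=X_0$ is del Pezzo with $k\times\frac1{3}(1,1)$ points and no other singularities, the hypothesis $k+2n_2+n_1\le 6$ of Theorem~\ref{thm:2} holds (indeed $k\le 6$ by proposition~\ref{pro:2}). The remark immediately following Theorem~\ref{thm:2} states precisely that a MMP starting in this class stays in the class, so each $X_i$ has only $\frac1{3}(1,1)$, $A_2$ and $A_1$ singularities with the bound preserved; in particular each $X_i$ is again a del Pezzo surface because $-K_{X_i}$ stays ample under a step of the $-K$-MMP. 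This part is essentially a citation of the structure theorem and costs little.

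For~(2.1), I would argue that the $f_i$-exceptional curves are precisely the irreducible components of the resolution graphs of the cyclic quotient singularities appearing on $X_i$. By~(1) these singularities are of type $\frac1{3}(1,1)$, $A_2$, or $A_1$, whose minimal resolutions contribute a single $(-3)$-curve, a chain of two $(-2)$-curves, and a single $(-2)$-curve respectively. Hence every $f_i$-exceptional curve is a $(-3)$-curve or a $(-2)$-curve, giving $m\in\{2,3\}$.

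The substance is in~(2.2), which combines a self-intersection calculation with the crucial no-floating-$(-1)$-curve hypothesis. Let $C\subset Y_i$ be irreducible, non-exceptional, with $C^2=-m<0$, and let $\overline{C}=f_i(C)\subset X_i$ be its image, a negative curve on $X_i$. I would run a MMP on $X_i$ directed so as to contract extremal curves meeting $\overline{C}$; since $X_i$ is del Pezzo of Picard rank $\ge 2$ (if rank one, Theorem~\ref{thm:2}(D) lists the surfaces and one checks directly that $\overline C$ must meet the singular locus), some extremal contraction $X_i\to X_i'$ is available. The key point is that Theorem~\ref{thm:2} forbids $\overline{C}$ from being a floating $(-1)$-curve on $X_i$: such a curve would be a $(-1)$-curve lying in the nonsingular locus (case E.1), and I must rule this out. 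If the proper transform $C$ of $\overline C$ on $Y_i$ meets no exceptional curve, then $\overline C\subset X_i^0$ lies in the nonsingular locus, so $C^2=\overline C^2$ and $-K_{X_i}\cdot \overline C=-K_{Y_i}\cdot C$; a negative curve in the smooth locus of a del Pezzo surface with $-K\cdot C>0$ and $C^2<0$ is forced by adjunction ($C^2+K_{Y_i}\cdot C=2p_a(C)-2\ge -2$) to be a $(-1)$-curve, i.e. a floating $(-1)$-curve, contradicting the hypothesis. Therefore $C$ must meet at least one $f_i$-exceptional curve, and then the transversality/incidence statements of Theorem~\ref{thm:2}(E)--(C) pin down $m=1$: the proper transform of any extremal curve is a $(-1)$-curve meeting the exceptional locus transversely, so $C$ itself is a $(-1)$-curve.

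The main obstacle I anticipate is the bookkeeping in~(2.2): carefully translating between the negative curve $\overline C$ on $X_i$ and its proper transform $C$ on $Y_i$, and verifying that \emph{every} non-exceptional negative curve arises as (the transform of) an extremal ray to which Theorem~\ref{thm:2} applies, rather than only the specific extremal curves chosen by the directed MMP. One must ensure the adjunction/discrepancy computation correctly accounts for how $C$ meets the $(-2)$- and $(-3)$-curves, so that $C^2=-1$ on $Y_i$ follows even though $\overline C^2$ may be a non-integer rational number on $X_i$. The cleanest route is to phrase this via $-K_{Y_i}$ and the fact that $f_i$ is crepant outside the $\frac1{3}(1,1)$ points (where the discrepancy is $-\frac13$), reducing everything to the smooth-surface adjunction already invoked above.
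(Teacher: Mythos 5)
There is a genuine gap at the crux of your argument for (2.2). Having supposed that $C$ meets no $f_i$-exceptional curve, you deduce that $f_i(C)$ is a floating $(-1)$-curve on $X_i$ and conclude ``contradicting the hypothesis.'' But the hypothesis of lemma~\ref{rmk:curves on Y} forbids floating $(-1)$-curves only on $X=X_0$, not on the intermediate surfaces $X_i$; the statement that no $X_i$ carries a floating $(-1)$-curve is precisely the ``in particular'' clause being proved, and it is not automatic. The danger is concrete: if $\varphi_{i-1}$ is of type (E.1), (E.2), (E.3) or (E.5), its exceptional divisor is contracted to a \emph{nonsingular} point $P\in X_i$, and a floating $(-1)$-curve on $X_i$ passing through $P$ does \emph{not} transport to a floating $(-1)$-curve upstairs (its proper transform has self-intersection $\leq -2$), so no direct contradiction with the hypothesis on $X$ arises. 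The paper closes this loop by an induction on $i$ that your proposal never sets up: granting (2.1)/(2.2) for $i-1$, a floating $(-1)$-curve on $X_i$ cannot pass through $P$ --- otherwise its proper transform on $Y_{i-1}$ would be a non-$f_{i-1}$-exceptional curve of self-intersection $<-1$, contradicting the induction hypothesis --- hence it pulls back to a floating $(-1)$-curve on $X_{i-1}$, and by descending induction to one on $X$, which is what finally contradicts the hypothesis. Relatedly, your assertion that ``theorem~\ref{thm:2} forbids $\overline C$ from being a floating $(-1)$-curve on $X_i$'' is not something that theorem says: it classifies extremal contractions and forbids nothing. (The same induction is also where the paper proves part (1), via the projection formula and Nakai--Moishezon, rather than asserting that ampleness of $-K$ is preserved by the MMP.)

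A second, lesser problem is your treatment of $m=1$ when $C$ does meet the exceptional locus: the ``transversality/incidence statements of theorem~\ref{thm:2}(E)--(C)'' describe the proper transforms of the curves contracted by extremal rays, whereas $C$ is an arbitrary negative curve on $Y_i$, which need not be extremal --- so this step is a non sequitur (a worry you flag yourself). The paper's computation avoids the case split entirely: since $-K_{Y_i}+f_i^\star K_{X_i}=\frac1{3}\sum E_j$ (sum over the $(-3)$-curves) is effective and $C$ is not exceptional, one gets $-K_{Y_i}\cdot C\geq f_i^\star(-K_{X_i})\cdot C=-K_{X_i}\cdot f_{i\,\star}C>0$ using part (1), and then adjunction forces $p_a(C)=0$ and $K_{Y_i}\cdot C=m-2<0$, i.e.\ $m=1$, for \emph{every} non-exceptional negative curve at once. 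Your closing remark about exploiting the discrepancies of $f_i$ is exactly this computation, but in the body of your proof it is invoked only in the disjoint case, leaving the intersecting case resting on the invalid appeal to theorem~\ref{thm:2}.
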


\begin{proof}[Proof of Lemma~\ref{rmk:curves on Y}]
  We prove the statement by induction on $i$.  We first show that
  $X_i$ is a del Pezzo surface. Suppose $X_{i-1}$ is del Pezzo and let
  $E\subset X_{i-1}$ be the effective divisor such that
  $K_{X_{i-1}}=\varphi_{i-1}^\star K_{X_{i}}+aE$, $a>0$. Let
  $\Gamma\subset X_i$ be a curve. Denoting by $\Gamma^\prime \subset
  X_{i-1}$ the proper transform, we have that:
\[
K_{X_i}\cdot \Gamma=K_{X_i}\cdot
\varphi_{i-1\,\star}\Gamma^\prime=\varphi_{i-1}^\star (K_{X_i})\cdot \Gamma^\prime
= (K_{X_{i-1}} -aE)\cdot \Gamma^\prime<0.
\] 
As $K_{X_{i}}^2>K_{X_{i-1}}^2$, by the Nakai-Moishezon criterion we
conclude that $-K_{X_{i}}$ is ample.

Assuming that (2.1) holds for $X_{i-1}$, then it also holds for $X_i$,
by the structure of divisorial contractions listed in theorem~\ref{thm:2}.

Let now $C$ be a $(-m)$-curve on $Y_i$ that is not contracted by
$f_i$, then since $-K_{Y_i}+f_i^\star K_{X_i}\geq 0$ we have that: 
\[
-K_{Y_i}\cdot C=f_i^\star (-K_{X_i})\cdot C + (-K_{Y_i}+f_i^\star
K_{X_i})\cdot C \geq f_i^\star (-K_{X_i})\cdot C=-K_{X_i}\cdot f_{i\,
  \star} C> 0.
\]
 Then $K_C=(K_{Y_i}+C)|_C<0$, therefore $C$ is a
rational curve and $K_{Y_i}\cdot C=m-2<0$ implies $m=1$, that is,
$C\subset Y$ is a $(-1)$-curve, and the image $C_i=f_i(C)\subset X_i$ is
a floating $(-1)$-curve. Now $C_i$ does not contain the image of the
$\varphi_{i-1}$-exceptional curve: otherwise, the proper transform
$C^\prime \subset Y_{i-1}$ would be a curve of negative
self-intersection $C^{\prime\,2}<-1$ not contracted by $f_{i-1}\colon
Y_{i-1}\to X_{i-1}$, contradicting (2.1) for $X_{i-1}$. Thus, $C_i$ is
the image of a floating $(-1)$-curve in $X_{i-1}$ and then in fact, by
descending induction on $i$, $C_i$ is the image of a floating
$(-1)$-curve on $X$, a contradiction to our main assumption that there
are no such things. This shows $(2.2)$.
\end{proof}

\begin{rem} \label{rem:6and3} In section~\ref{sec:trees} we use the
  following type of argument very frequently. Suppose that
\[
X=X_0\overset{\varphi_0}{\longrightarrow} \ldots \longrightarrow
X_{i}\overset{\varphi_{i}}{\longrightarrow} X_{i+1}
\overset{\varphi_{i+1}}{\longrightarrow} X_{i+2} \ldots
\]
is the sequence of contractions and surfaces occurring in a directed
MMP for $X$. If $\varphi_i$ is of type (E.6), then $\varphi_{i+1}$ is
not of type (E.3). Indeed denote by $f_i:Y_i\rightarrow
X_i$ the minimal resolution. If $\varphi_{i+1}$ is of type (E.3), the proper transform $C\subset Y_{i+1}$ of
the curve contracted by $\varphi_{i+1}$ is a $(-1)$-curve, and its proper
transform on $Y_i$ is a $(-1)$-curve that shows that a contraction of
type (E.3) or (E.4) was available on $X_i$ in the first place, and this is a
contradiction.
\end{rem}


\section{Trees}
\label{sec:trees}


This section is the heart of the paper. We prove theorem~\ref{thm:4},
from which theorem~\ref{thm:5} of the introduction immediately
follows. The proof uses proposition~\ref{pro:2}. 

\begin{thm}
  \label{thm:4}
  Let $X$ be a del Pezzo surface with $k\geq 1$ $\frac1{3}(1,1)$
  singular points. If $X$ has no floating $(-1)$-curves, then it is
  one of the following surfaces. The images show the sequence of
  contractions and surfaces of the directed MMP for $X$ providing a
  birational construction of it, followed by---and separated by a
  double horizontal rule---a picture of the minimal resolutions of the
  surfaces of the MMP showing a configuration of curves on them. We
  hope that these are self-explanatory:

  \begin{itemize}
\item In the images showing the sequence of contractions we record the
  singularities on each intermediate surface. For example, ``$2\times
  1/3 + A_2$" signifies a surface with two $\frac1{3}(1,1)$
  singularities and one $A_2$ singularity.
\item In the pictures showing the minimal resolutions the contracted
  curves are in bold and their images are denoted by a bold point.
  \end{itemize}

\begin{minipage}{\textwidth}
  \begin{flushleft}
    (1) $k=1$ and either $X=\Bsurf{1}{16/3}$ (the first case
    pictured), or $X=\PP(1,1,3)$ (the second case pictured):
  \end{flushleft}
  \centering{
  \resizebox{16cm}{!}{\includegraphics[width=0.8\textwidth]{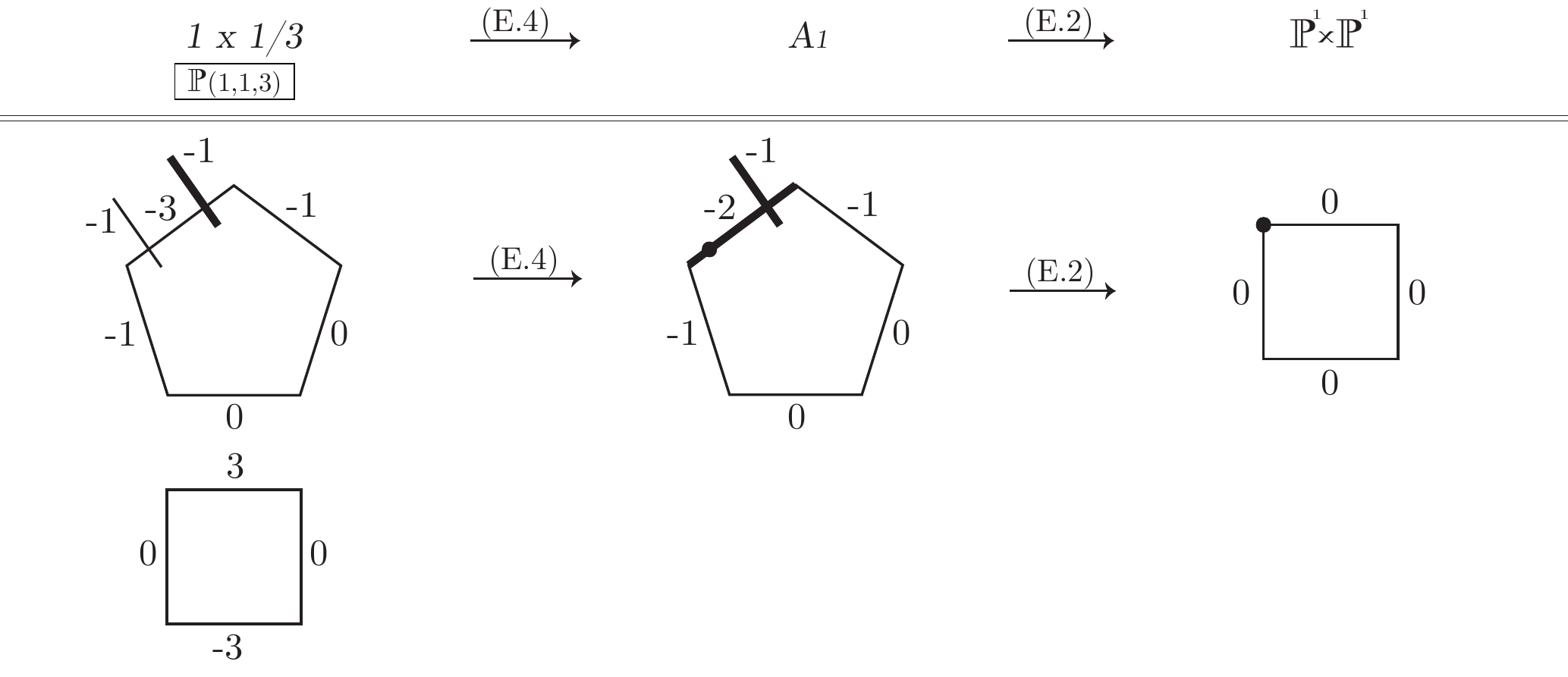}}}
\end{minipage}
\vskip 1truecm
\begin{minipage}{\textwidth}
  \begin{flushleft}
(2) $k=2$ and either $X=\Bsurf{2}{8/3}$ (the first case pictured),
  or $\Xsurf{2}{17/3}$ (the second case):
  \end{flushleft}
  \centering{
  \resizebox{16cm}{!}{\includegraphics[width=0.8\textwidth]{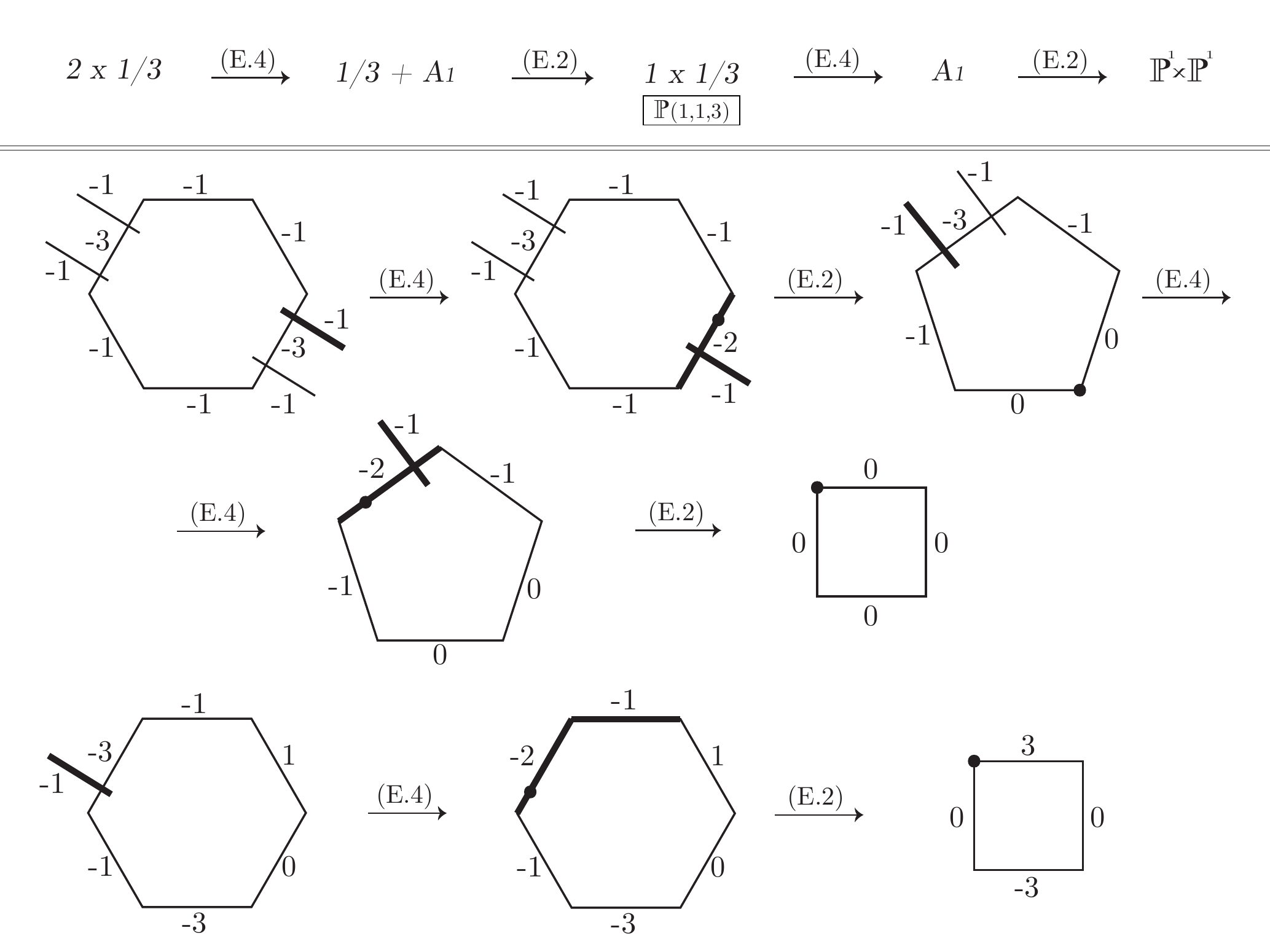}}}
\end{minipage}
\vskip 1truecm
\begin{minipage}{\textwidth}
  \begin{flushleft}
    (3) $k=3$ and $X=\Xsurf{3}{5}$:
  \end{flushleft}
  \centering{
  \resizebox{16cm}{!}{\includegraphics[width=0.8\textwidth]{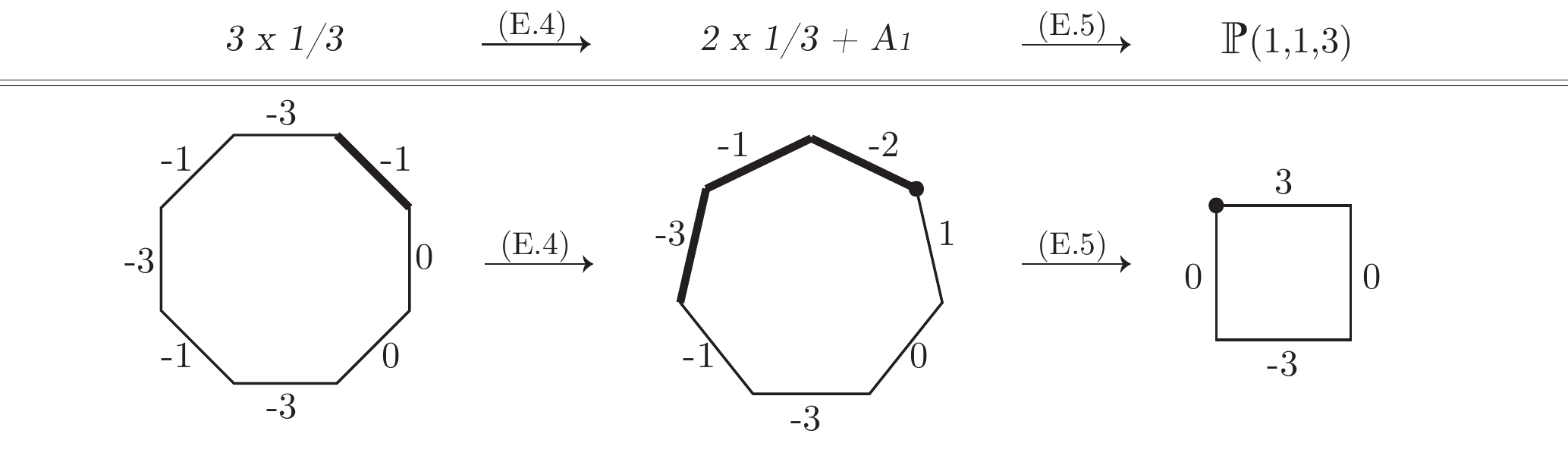}}}
\end{minipage}
\vskip 1 truecm
\begin{minipage}{\textwidth}
  \begin{flushleft}
    (4) $k=4$ and $X=\Xsurf{4}{7/3}$:
  \end{flushleft}
  \centering{
  \resizebox{16cm}{!}{\includegraphics[width=0.8\textwidth]{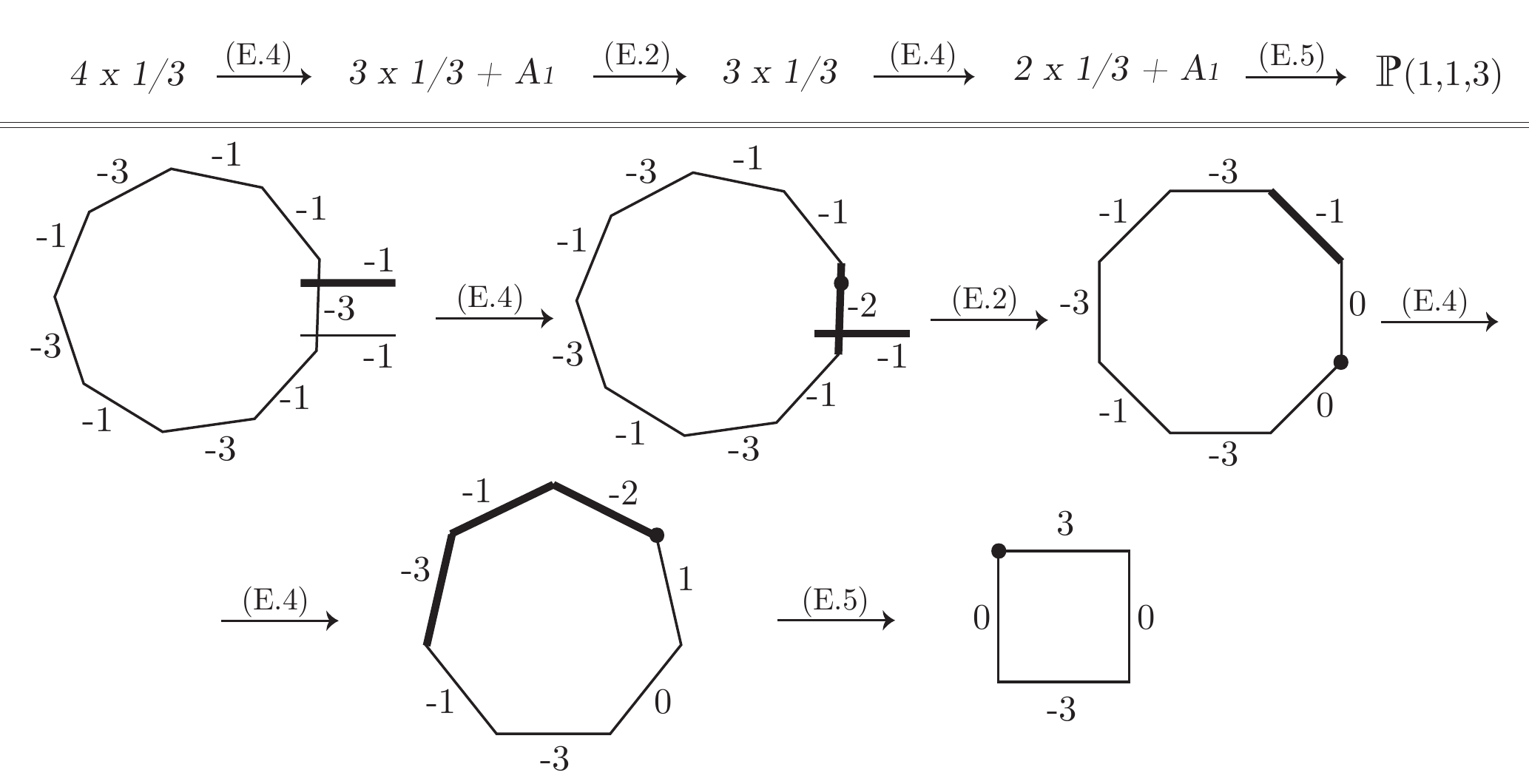}}}
\end{minipage}
\vskip 1 truecm
\begin{minipage}{\textwidth}
  \begin{flushleft}
    (5) $k=5$ and $X=\Xsurf{5}{5/3}$:
  \end{flushleft}
  \centering{
  \resizebox{16cm}{!}{\includegraphics[width=0.8\textwidth]{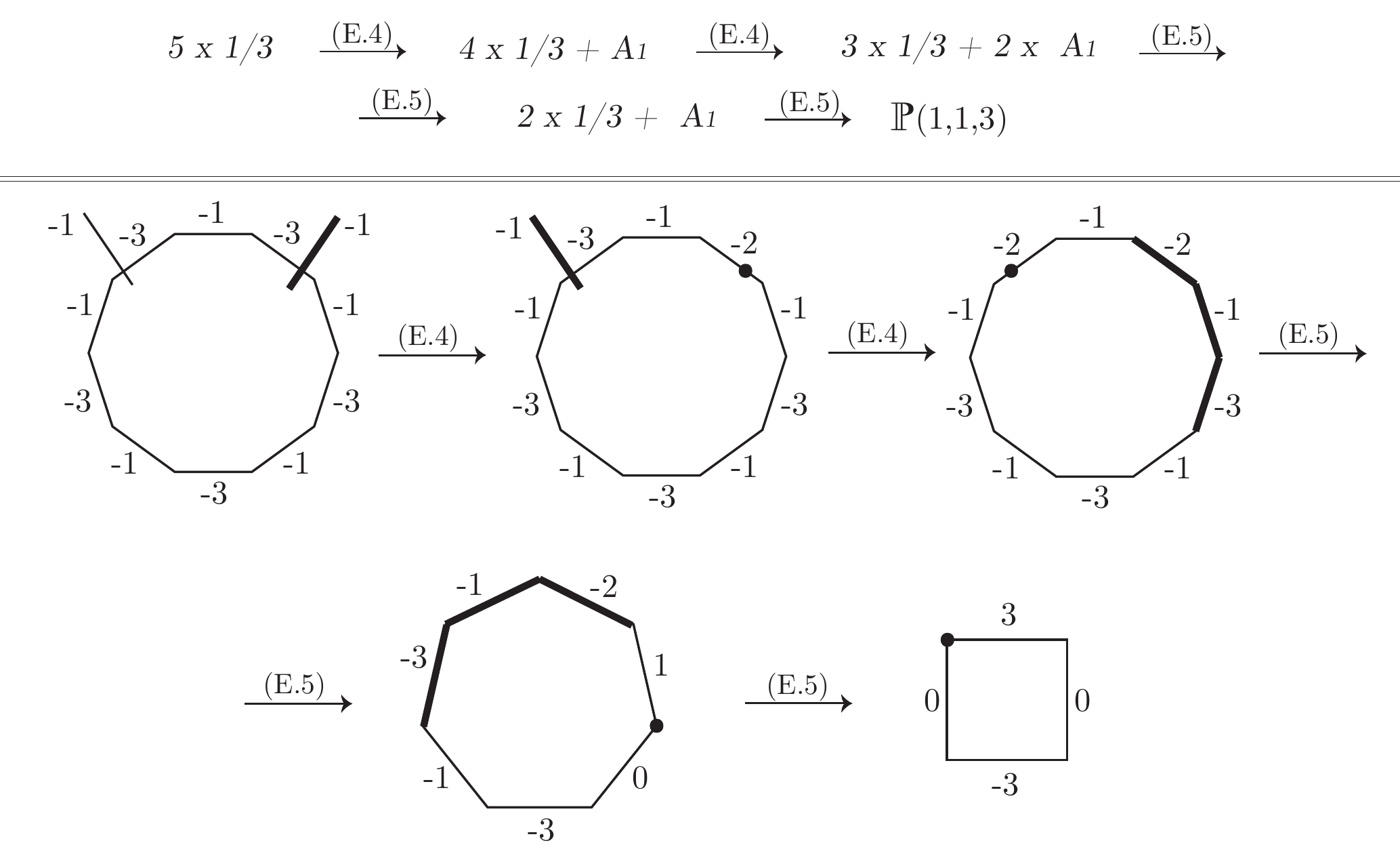}}}
\end{minipage}
\vskip 1truecm
\begin{minipage}{\textwidth}
  \begin{flushleft}
    (6) $k=6$ and $X=\Xsurf{6}{2}$:
  \end{flushleft}
  \centering{
  \resizebox{16cm}{!}{\includegraphics[width=0.8\textwidth]{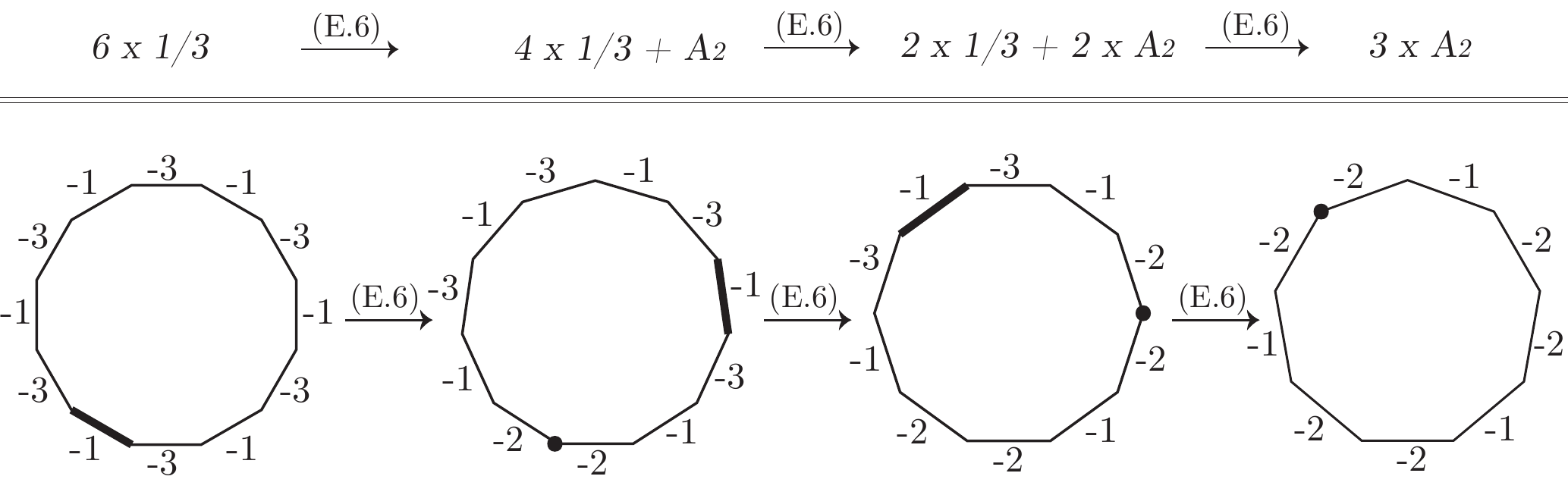}}}
\end{minipage}
\end{thm}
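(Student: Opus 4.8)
The plan is to combine the numerical constraints of Proposition~\ref{pro:2} with a combinatorial analysis of the directed MMP, carried out on the minimal resolution $f\colon Y\to X$. First I would fix the bookkeeping on $Y$. Since $X$ has only $\frac{1}{3}(1,1)$ points, the $f$-exceptional locus is a disjoint union of $k$ $(-3)$-curves $E_1,\dots,E_k$, and by Lemma~\ref{rmk:curves on Y} every other negative curve on $Y$ is a $(-1)$-curve meeting at least one $E_i$. The hypothesis that $X$ has no floating $(-1)$-curve translates into the clean condition that \emph{every} $(-1)$-curve on $Y$ meets some $E_i$; moreover, again by Lemma~\ref{rmk:curves on Y}, this property persists for the minimal resolution of every surface occurring in any MMP of $X$. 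By Proposition~\ref{pro:2} it then suffices to treat the finitely many pairs $(k,K^2)$ with $1\le k\le 6$ listed there.

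Next I would run the directed MMP $X=X_0\to X_1\to\cdots\to X_N$ and analyse its first step. Because $X_0$ carries only $\frac{1}{3}(1,1)$ points, none of the contraction types of Theorem~\ref{thm:2} requiring an $A_1$ or $A_2$ point (namely (E.2), (E.3), (E.5), (C.1), (C.2)) is available, and a conic-bundle structure is impossible: every point lies on a fibre, so some fibre passes through a $\frac{1}{3}(1,1)$ point, and such a fibre must be of type (C.2), forcing an $A_2$ point that $X$ does not have. Hence either $X$ already has Picard rank one---in which case inspection of (D.1)--(D.5) shows $X=\PP(1,1,3)$, one of the $k=1$ cases---or the first contraction is of type (E.4) or (E.6). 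Inductively the intermediate surfaces acquire $A_1$ and $A_2$ points, but the bound $k+2n_2+n_1\le 6$ of Theorem~\ref{thm:2} is preserved, and by Remark~\ref{rem:6and3} the order in which contraction types may follow one another is constrained (for instance (E.6) is never immediately followed by (E.3)). This bounds both the length and the branching of the tree of directed MMPs.

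The geometric heart is to realise each contraction $X_i\to X_{i+1}$ as the blow-down of the relevant $(-1)$-curve on the minimal resolutions $Y_i\to Y_{i+1}$ and to track how the configuration of $(-3)$-, $(-2)$- and $(-1)$-curves evolves: an (E.4)-step lowers one $(-3)$-curve to a $(-2)$-curve, an (E.6)-step fuses two disjoint $(-3)$-curves into an $A_2$-chain of two $(-2)$-curves, and so on, until the terminal surface $X_N$ is one of the rank-one surfaces (D.1)--(D.5) carrying only Du Val points. Reversing the contractions from $X_N$, and using $K_Y^2=K_X^2-k/3$ together with Noether's formula $K_Y^2+\rho(Y)=10$ to fix the number $\rho(Y)$ of blow-ups, I would enumerate all configurations of $k$ disjoint $(-3)$-curves on $Y$ compatible with the no-floating condition for each admissible $(k,K^2)$, match each surviving configuration to the asserted surface, and draw the pictures. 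Existence of the listed surfaces, together with the verification that they genuinely have no floating $(-1)$-curve, is supplied by the constructions of Tables~\ref{table:2} and~\ref{table:1}.

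The hard part will be the combinatorial completeness: for every $(k,K^2)$ surviving Proposition~\ref{pro:2} one must show that \emph{exactly} the listed configurations occur and no others. The delicate points are (i) the cases $k=1,2$, where two minimal surfaces appear and where the alternative chains of blow-downs demanded by Remark~\ref{rem:cascades} and Corollary~\ref{cor:1} must be exhibited; (ii) checking at each candidate that no floating $(-1)$-curve has been created, since this is precisely the constraint that prunes the tree; and (iii) eliminating the numerically admissible but geometrically impossible pairs---this is how the cases flagged in Remark~\ref{rem:8}, in particular $k=5,\ K^2=8/3$, are finally ruled out, completing the last line of Table~\ref{tab:defective}.
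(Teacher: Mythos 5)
Your overall strategy coincides with the paper's: run the directed MMP, use theorem~\ref{thm:2} and proposition~\ref{pro:2} to bound the tree of possible contraction sequences, track the configurations of $(-3)$-, $(-2)$- and $(-1)$-curves on the minimal resolutions, and prune branches using the directedness constraint and remark~\ref{rem:6and3}. The skeleton is right, but two steps, as written, would fail.

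First, your assertion that the directed MMP terminates in one of the rank-one surfaces (D.1)--(D.5) is false, and an enumeration built on it is incomplete. Theorem~\ref{thm:2} also allows the program to terminate with a contraction of type (C), a conic bundle over $\PP^1$: your first-step argument excluding conic bundles uses that $X$ has no $A_2$ (or $A_1$) points, but after (E.4) and (E.6) contractions the intermediate surfaces do acquire such points, and special fibres of types (C.1) and (C.2) become available. This genuinely happens: in the paper's $k=4$ analysis the branch $(E.4)+(E.4)+(E.4)+(E.4)$ ends in a conic bundle with two fibres of type (C.1) and must be explicitly ruled out, while for $k=6$ one of the two directed MMPs of $\Xsurf{6}{2}$ is $(E.6)+(E.6)$, ending in a conic bundle with two fibres of type (C.2) --- a branch that \emph{does} occur. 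Incidentally, (D.5) $=\PP(1,1,3)$ carries a $\frac1{3}(1,1)$ point rather than Du Val points, and it is the actual terminus of the $k=1$ and $k=3$ trees.

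Second, your pruning criterion (ii) --- ``check that no floating $(-1)$-curve has been created'' --- is vacuous: lemma~\ref{rmk:curves on Y}(2.2) already guarantees that no surface occurring in any MMP of $X$ carries a floating $(-1)$-curve, so this check eliminates nothing. What actually kills the spurious branches in the paper is the directedness constraint applied retrospectively: one rebuilds $Y$ from the terminal surface by blow-ups and uses remark~\ref{rem:1} (an irreducible rational curve of positive self-intersection on a del Pezzo surface moves in a free linear system) to ``move'' curves and exhibit \emph{additional} negative curves in the configuration, thereby showing that a higher-priority contraction --- e.g.\ an (E.2) when an (E.4) or (E.5) was performed --- was already available at an earlier stage, a contradiction. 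This device, or some substitute for it, is indispensable; it is exactly how the $k=4$ branches $(E.4)+(E.4)+(E.4)+(E.4)$ and $(E.4)+(E.4)+(E.5)+(E.5)$ are eliminated, and your outline contains nothing that would do this work. The pruning tools you do list --- non-existence of the would-be rank-one terminal surface in the (D) list, parity and type constraints on conic bundle fibres, and playing the already-established cases $k'<k$ against the Picard-rank bounds of proposition~\ref{pro:2} --- are necessary but not sufficient.
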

\vskip 1truecm
\begin{proof}
  In all cases we run the directed MMP for $X$. In other words, at
  each step we choose rays exactly in the order that they are listed
  in theorem~\ref{thm:2}. The figures in the statement show the
  sequence of contractions as they occur in the directed minimal model
  program.

  We begin the proof by drawing a tree representing the directed MMPs
  that can potentially occur (figure~\ref{fig:k=4possibtree} is an
  example). For each branch, corresponding to a sequence of
  contractions, we construct a configuration of curves on the minimal
  resolution $Y$. In many cases, the configuration of curves shows
  that at some stage in the MMP there was the option of performing a
  contraction higher up in the list of theorem~\ref{thm:2}: that is,
  the MMP represented by that branch is not directed and hence it does
  not actually occur. At the end we are left with the directed MMPs
  that actually take place.

  Here we only treat in detail the cases $k=4$ and $k=6$; the other
  cases are very similar and can be done by the same
  methods. Figures~\ref{fig:k=1pos} to~\ref{fig:k=5pos} at the
  end of the proof list the remaining trees for all $k$. We leave it up
  to the interested reader to finish the proof. 

\paragraph{The $k=4$ case}

  We first argue that the sequence of extremal contractions of the
  directed MMP must be one of those shown on
  figure~\ref{fig:k=4possibtree}. 

In the argument that follows we denote by 
\[
X=X_0\overset{\varphi_0}{\longrightarrow} \ldots \longrightarrow
X_{i-1}\overset{\varphi_{i-1}}{\longrightarrow} X_i \longrightarrow \ldots
\]
the sequence of contractions and surfaces occurring in a directed MMP
for $X$. Also we denote by $f_i\colon Y_i \to X_i$ the minimal
resolutions.  By theorem~\ref{thm:2}, $\varphi_0$ is either of type
(E.6) or (E.4) and we claim that (E.6) does not occur. 

Suppose for a contradiction that $\varphi_0$ is an (E.6)
contraction. By theorem~\ref{thm:2}, $\varphi_1$ is of type (E.3),
(E.4) or (E.6): indeed, $\varphi_1$ can not be a conic bundle because
$X_1$ has an odd number of singular points and from the classification
of fibres every special fibre has two singularities on it, and it is
clear from the classification that $X_1$ is not a del Pezzo surface
with $\rho =1$.

By remark~\ref{rem:6and3}, (E.3) can not follow (E.6). If $\varphi_1$
is of type (E.4), then this contraction would have been already
available on $X_0$, a contradiction. Finally, if $\varphi_1$ is of
type (E.6), $X_2$ has $2\times A_2$ singularities and then, by
theorem~\ref{thm:2}, $\varphi_2$ is of type (E.3): just as before,
none of the $\rho=1$ del Pezzo surfaces have $2\times A_2$
singularities, and from the classification of fibres $\varphi_2$ can
not be a conic bundle. But again (E.3) can not follow (E.6).

All of this shows that $\varphi_0$ is of type (E.4), therefore $X_1$
has $3\times\frac1{3}(1,1) +A_1$ singularities. Thus $\varphi_1$ can
be of type (E.2), (E.4), (E.5) or (E.6), and we claim that the last
two do not occur. 

Suppose for a contradiction that $\varphi_1$ is of type (E.5). $X_2$
is a del Pezzo surface with $2\times\frac1{3}(1,1)$ singularities. By
the case $k=2$ of the theorem, which we assume to have already proved,
$\varphi_2$ is of type (E.4) and this contraction was available on $X_1$,
a contradiction.

If $\varphi_1$ is of type (E.6) the surface $X_2$ has
$A_1+A_2+\frac1{3}(1,1)$ singularities. The contraction $\varphi_2$ can not
be of type (E.2), (E.4) or (E.5) because otherwise the same
contraction would have been available on $X_1$. It can not be of type
(E.3) either because by remark~\ref{rem:6and3} (E.3) can not follow
(E.6). By theorem~\ref{thm:2} these were the only possibilities thus
this case does not occur.

If $\varphi_1$ is of type (E.2) then $X_2$ is a del Pezzo surface with
$k=3$ and the tree continues as in the $k=3$ case, which we assume
already known. 

If $\varphi_1$ is of type (E.4) then $X_2$ has $2\times A_1 + 2\times
\frac1{3}(1,1)$ singularities. 

The next contraction $\varphi_2$ is not of type (E.2) because it would
have been available earlier.

If $\varphi_2$ were of type (E.6) then $X_3$ would have $A_2+2A_1$
singularities. The next contraction $\varphi_3$ is not of type (E.2)
because it would have been available earlier; it is not of type (E.3)
because by remark~\ref{rem:6and3} (E.3) does not follow (E.6); it is
not of fibering type because $X_3$ has an odd number of
singularities; and $X_3$ is not a del Pezzo surface with $\rho =1$ by
the classification of theorem~\ref{thm:2}.

Thus, $\varphi_2$ is not of type (E.2) or (E.6) and it can be only of
type (E.4) or (E.5), which can be shown to lead respectively to the
two remaining possibilities in figure~\ref{fig:k=4possibtree}.

\begin{figure}[H]
    \caption{$k=4$ tree of possibilities}
    \label{fig:k=4possibtree}
  \centering
  \resizebox{16cm}{!}{\includegraphics[width=0.8\textwidth]{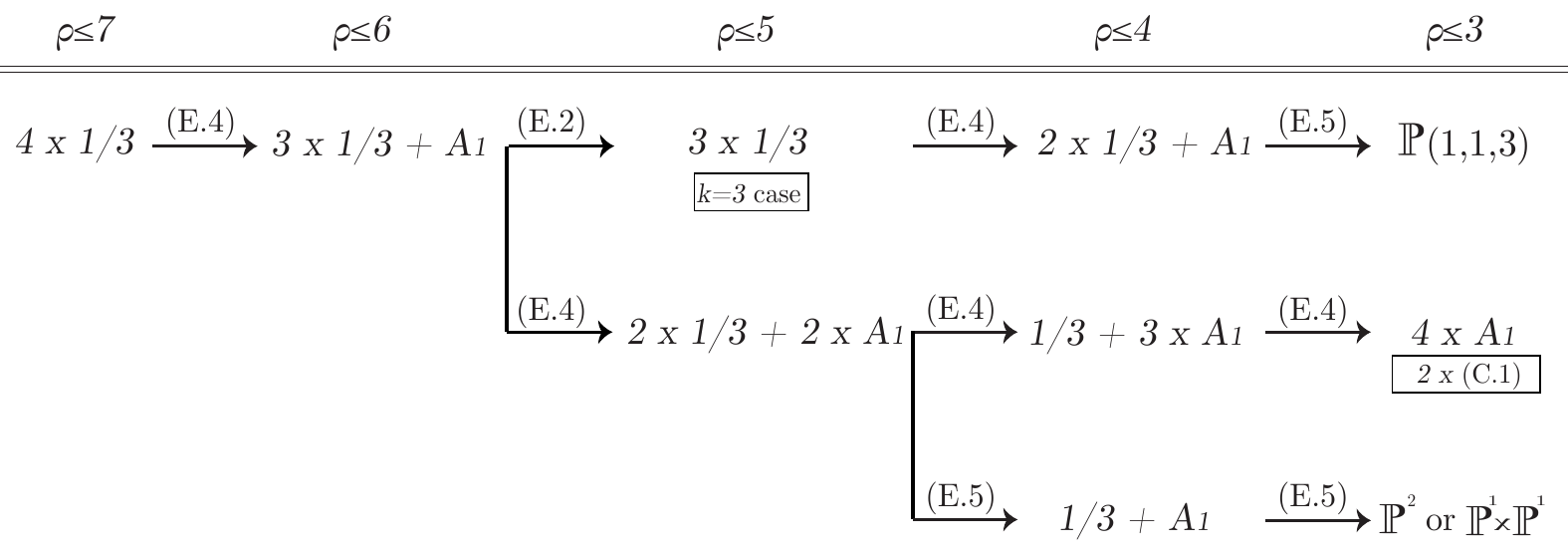}}
\end{figure}

We now explore the branches of this tree one at a time and show that
only one actually occurs.

\paragraph{Case 1}
\label{sec:case-1}

$(E.4)+(E.2)+(E.4)+(E.5)$
 
\medskip

  If this sequence of contractions occurs, then $Y$ must contain the
  configuration of curves depicted in figure~\ref{fig:k=4case1cv-cfg}
  below.  The figure shows the effect of the contractions of the MMP
  on the minimal resolutions: the contracted curves are in bold, as are the
  points onto which they map. 

\begin{figure}[H]
    \caption{A picture of the configuration of negative curves for
      $k=4$, Case~1}
    \label{fig:k=4case1cv-cfg}
  \centering
  \resizebox{16cm}{!}{\includegraphics[width=0.8\textwidth]{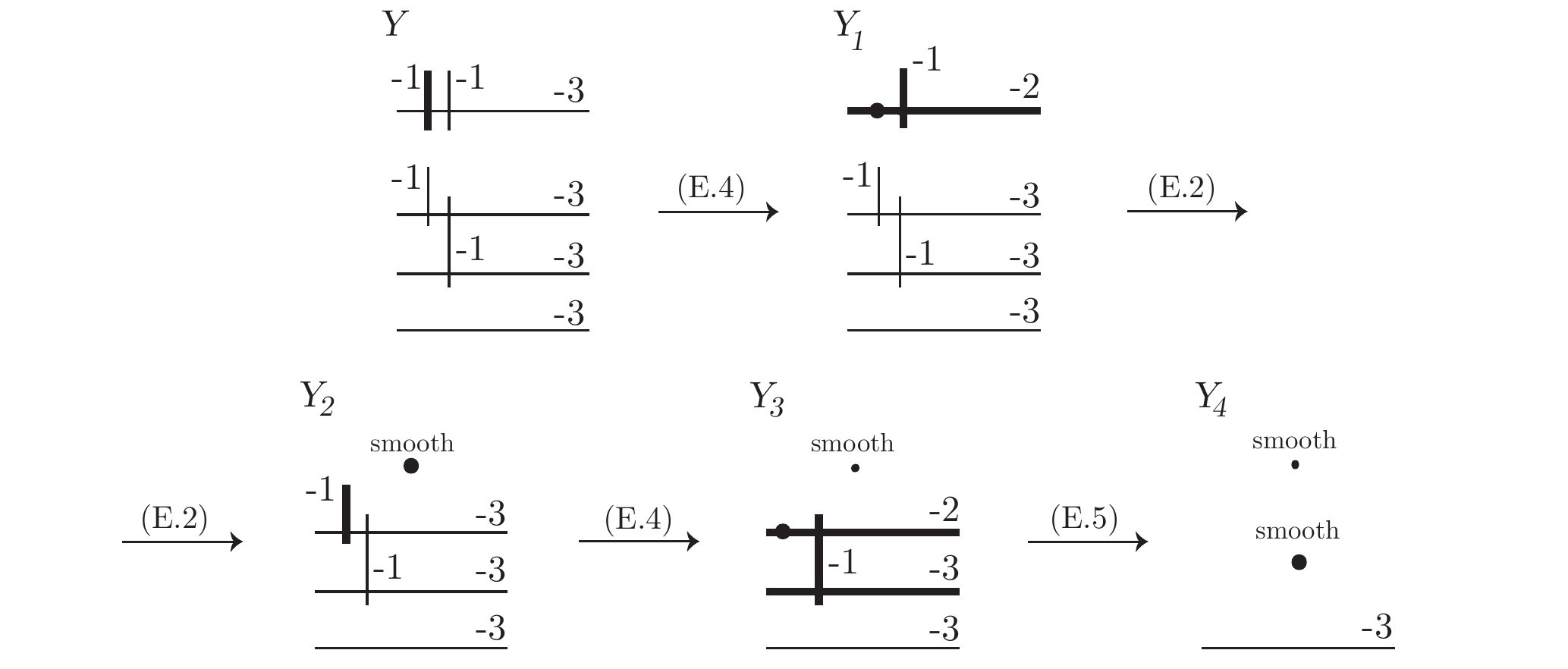}}
\end{figure}

Looking more closely at how $Y$ is built from $Y_4=\FF_3$ by a sequence
of blow-ups, we argue that $Y$ must have more negative curves,
shown in figure~\ref{fig:k=4case1exp.pdf} below.

We use the following result:

  \begin{rem}
    \label{rem:1}
    Let $X$ be a del Pezzo surface and $C\subset X$ an irreducible
    rational curve with positive self intersection. Then $C$ moves in
    a free linear system. Indeed, the map
    $H^0(X,\mathcal{O}_X(C))\rightarrow H^0(C,\mathcal{O}_C(C))$ is
    surjective because $-K_X$ is ample. Since $C^2>0$ we have that
    $\mathcal{O}_C(C)$ is base point free and the conclusion
    immediately follows from the vanishing of $H^1(X, \oo_X)$.
  \end{rem}

\begin{figure}[H]
  \caption{A better picture of the configuration of negative curves
    for $k=4$, Case~1}
  \label{fig:k=4case1exp.pdf}
  \centering
  \resizebox{16cm}{!}{\includegraphics[width=0.8\textwidth]{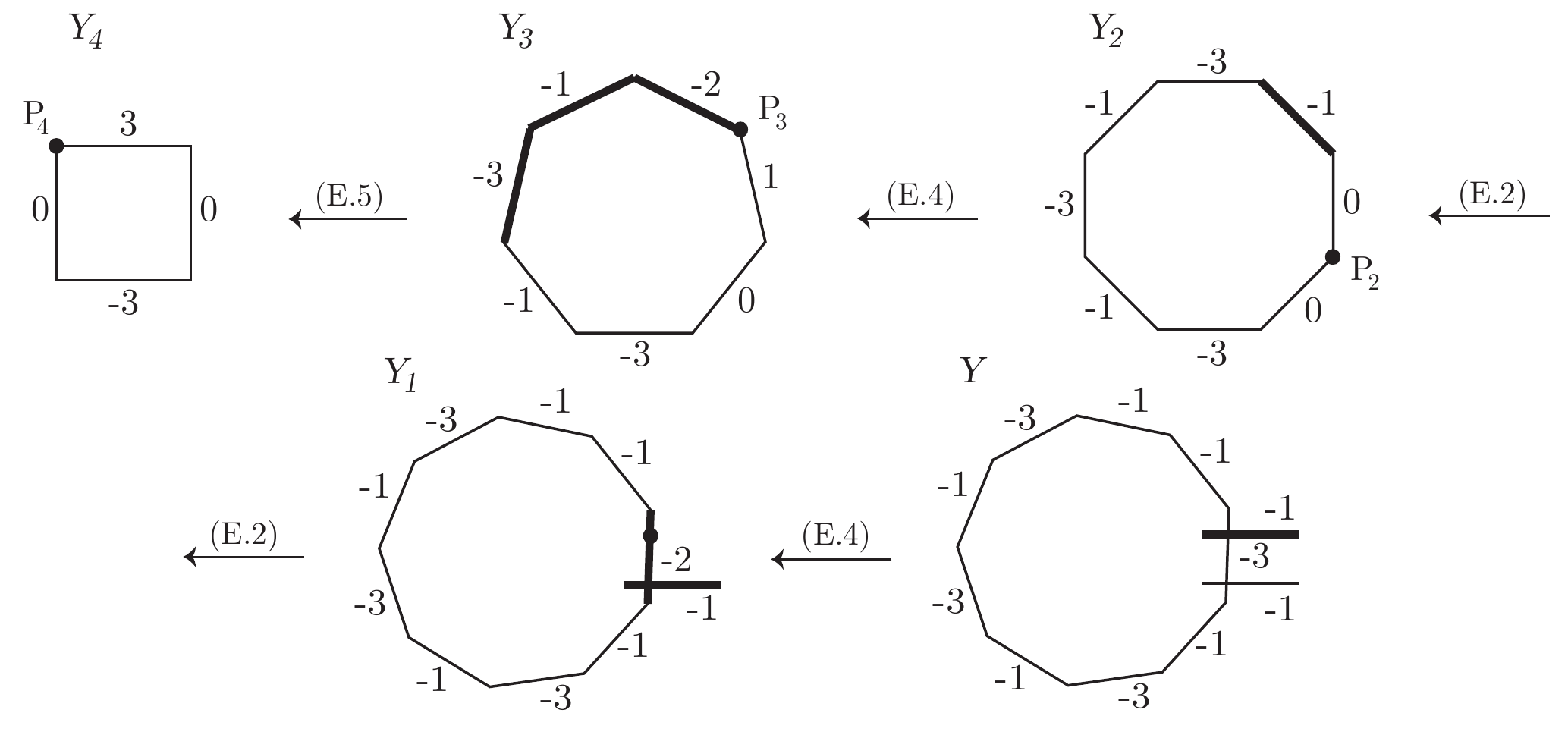}}
\end{figure}

Indeed, the point $P_4\in Y_4$ that is the image of the exceptional curve
in $Y_3$ does not lie on the $(-3)$-curve and then by
remark~\ref{rem:1} we can choose a configuration of curves as
shown in the Figure displaying $P_4$ as the intersection of a fibre and a curve
of self-intersection $+3$. At the next step we need to blow up a point
$P_3\in Y_3$ on the $(-2)$-curve and not contained in any other negative
curve. By remark~\ref{rem:1} again, we can ``move'' the curve with
self-intersection $1$ until it contains $P_3$ as in the figure. At the
next step again we need to blow up a nonsingular point $P_2\in Y_2$
not lying on any negative curve, and we use remark~\ref{rem:1} to
``move'' the two curves with self-intersection $0$ until they both
contain $P_2$.  

We are left with the minimal resolution of the surface $X_{4,7/3}$.

\paragraph{Case 2}
\label{sec:case-2}

$(E.4)+(E.4)+(E.4)+(E.4)$
 
\medskip

We contract one $(-1)$-curve intersecting each $(-3)$-curve on $Y$ and
end up with a surface fibering over $\PP^1$, denoted by $Y_4$,
corresponding to an extremal contraction $X_4\to \PP^1$ having two
singular fibers of type (C.1). As $Y_4$ is a nonsingular surface, we
next run the classical Minimal Model Program for $Y_4$ relative to the
existing fibration $Y_4\to \PP^1$, which ends in a Segre surface
$\FF_k$, and we claim that we can assume that $k=0$:

\begin{figure}[H]
  \centering
  \resizebox{16cm}{!}{\includegraphics[width=0.8\textwidth]{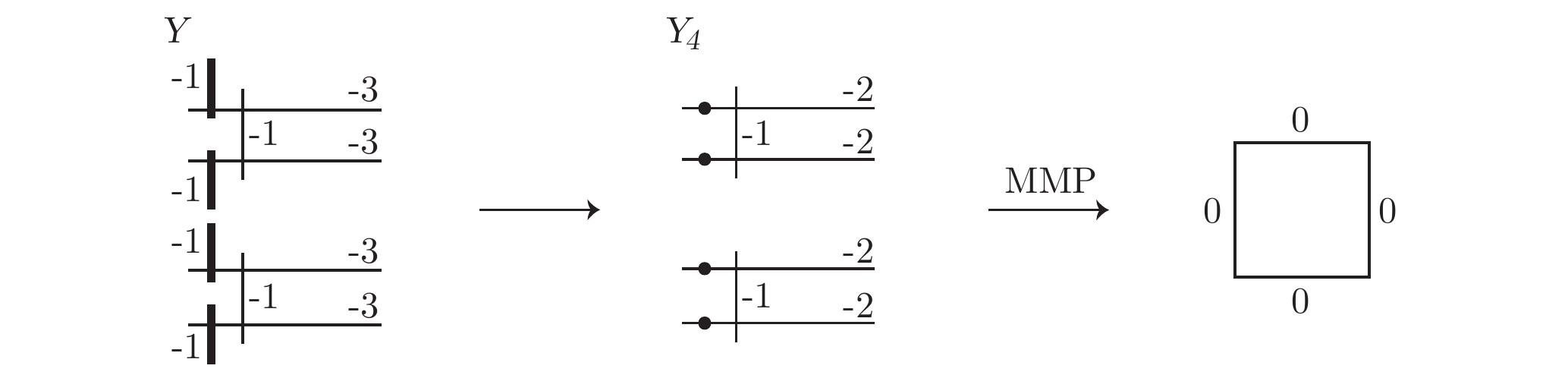}}
  \label{fig:k=4case2cv-cfg}
\end{figure}

Indeed, since all negative curves left on $Y_4$ have self-intersection
$\geq -2$, the same is true about $\FF_k$. We are left with
$k\in\{0,1,2\}$. If $k=2$, this leads to a $(-2)$-curve on $Y$ not
contracted by the morphism to $X$, contradicting lemma \ref{rmk:curves
  on Y}. If on the other hand $k=1$ then by choosing the last
contraction differently we would have landed on $\FF_0$.

\begin{figure}[H]
  \centering
  \resizebox{16cm}{!}{\includegraphics[width=0.8\textwidth]{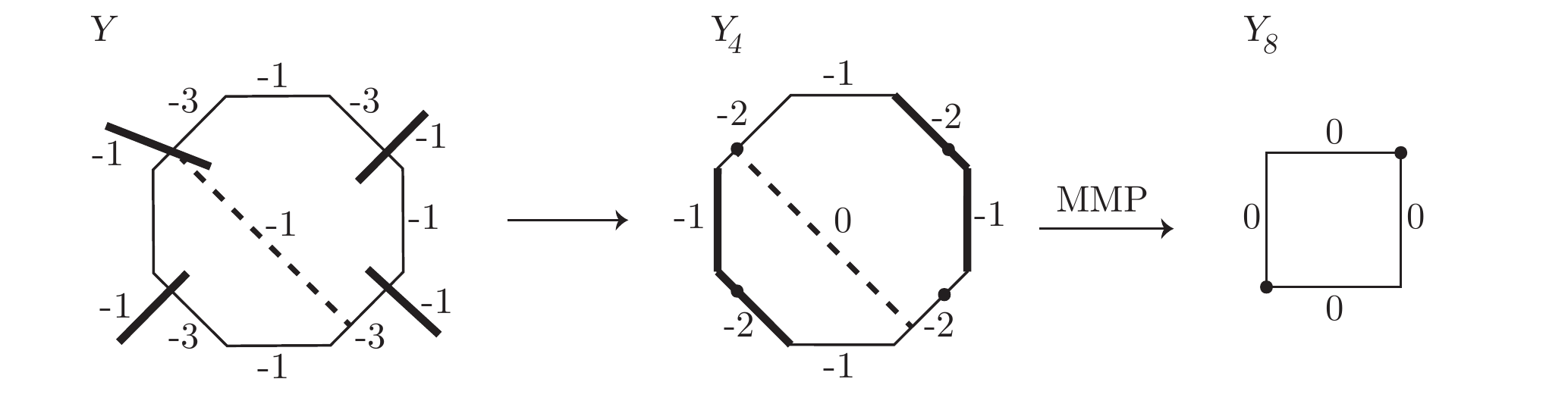}}
  \label{fig:k=4case2exp}
\end{figure}

The horizontal ruling of $\FF_0$ transforms to a free linear system of
$(0)$-curves intersecting two of the opposing $(-2)$-curves in the
special fibers on $Y_4$, depicted as a dashed line in the figure. As
both of these $(-2)$-curves contain a point that is to be blown up in
the process of building $Y$, we denote by the dotted line the
$(0)$-curve passing through the point $P_4\in Y_4$ that is the image of
the exceptional curve in $Y_3$ corresponding to the last (E.4)-type
contraction (as well as its strict transform on $Y$). Exactly before
this contraction is performed, in $Y_3$ (and, ultimately, also in
$Y$), the dotted line is a $(-1)$-curve intersecting only the
$(-1)$-curve that is to be contracted and the $(-2)$-curve on the
other special fiber. This means that an (E.2)-type contraction was
available on $X_3$, which contradicts the fact that a directed MMP was
used in obtaining $X_4$. Therefore this case
does not occur.

\paragraph{Case 3}
\label{sec:case-3}

$(E.4)+(E.4)+(E.5)+(E.5)$
 
\medskip

In this case the minimal resolutions must contain the following
configurations of curves:

\begin{figure}[H]
  \centering
  \resizebox{16cm}{!}{\includegraphics[width=0.8\textwidth]{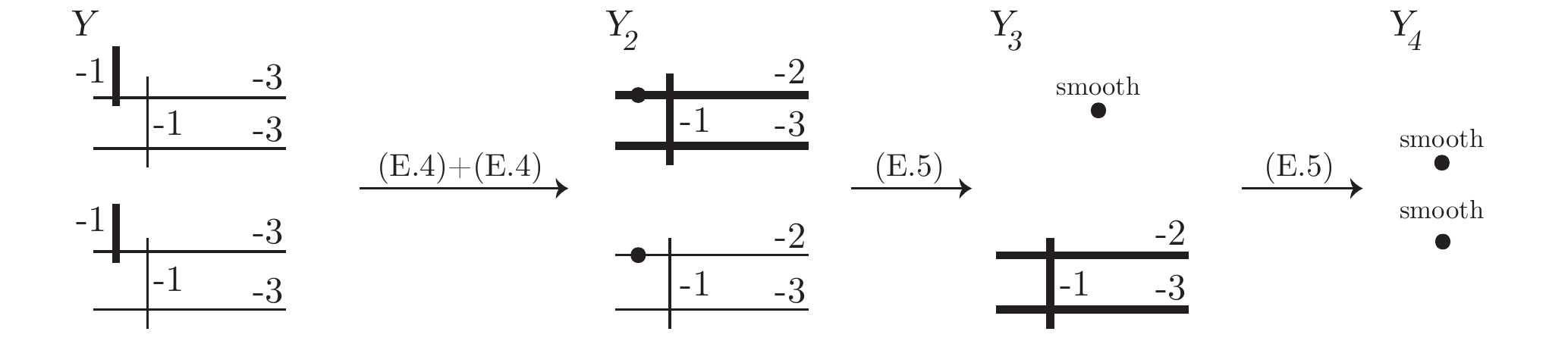}}
  \label{fig:k=4case3cv-cfg}
\end{figure}

Since the resulting surface $X_4$ is smooth and rationally connected
and, by lemma~\ref{rmk:curves on Y}, it contains no negative curves,
it is either $\PP^2$ or $\PP^1\times \PP^1$. In the case of the
projective plane, looking more closely at how $Y$ is built from
$Y_4=\PP^2$ by a sequence of blow-ups, we see that $Y$ must have the
negative curves shown in the following figure:

\begin{figure}[H]
  \centering
  \resizebox{16cm}{!}{\includegraphics[width=0.8\textwidth]{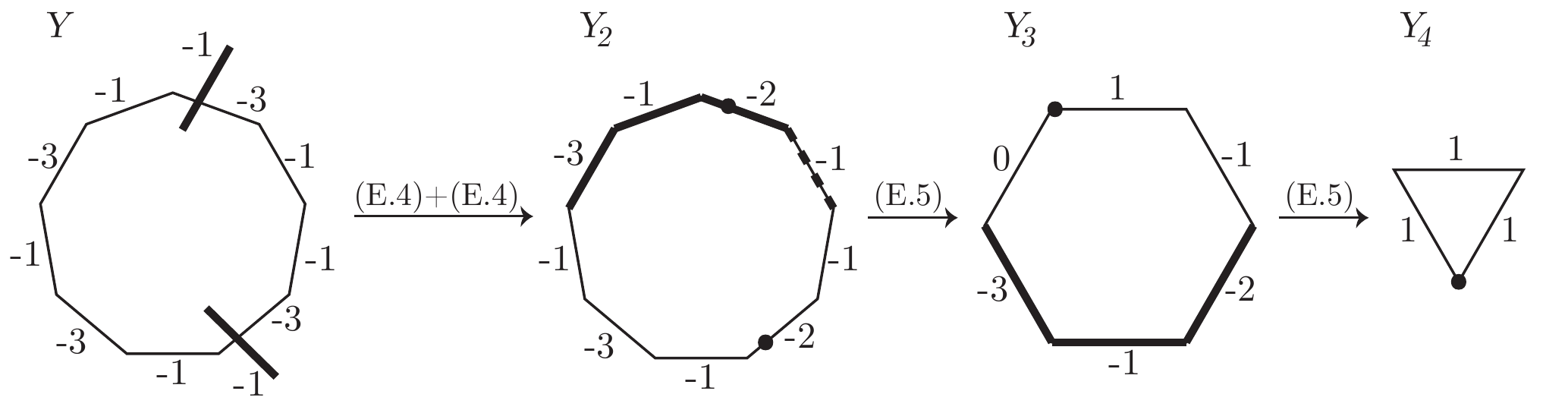}}
  \label{fig:k=4case3P2exp}
\end{figure}

Note that after having done the first two steps of the MMP, $Y_2$
contains two $(-1)$-curves showing that two (E.2)-type contractions
were available on $X_2$, which proves that the MMP is not directed and
that this case does not occur.

If $Y_4=\PP^1\times\PP^1$ we reach a contradiction without having to study the
minimal resolution of $X$. Indeed, the following figure shows the
contraction $Y_3\to Y_4$:

\begin{figure}[H]
  \centering
  \resizebox{16cm}{!}{\includegraphics[width=0.8\textwidth]{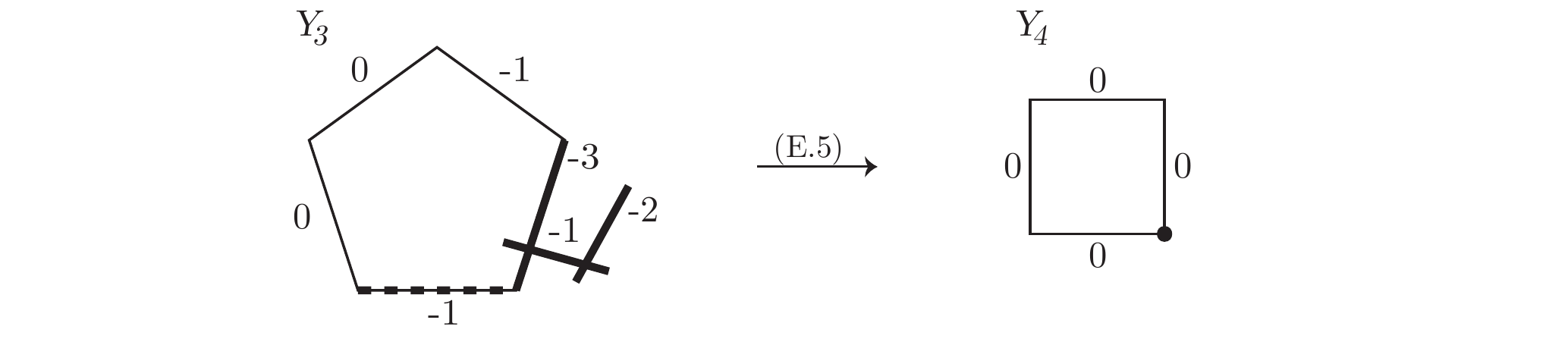}}
  \label{fig:k=4case3P1xP1exp}
\end{figure}

From the picture it is clear that an (E.4)-type contraction was
available on $X_3$, a contradiction. This case also does not occur,
and the only surface with four $\frac1{3}(1,1)$ points is the one
described in Case 1.

\paragraph{The $k=6$ case}

We argue that the sequence of extremal contractions of the directed
MMP is one of the two shown in figure~\ref{fig:k=6possibtree}.

We use the same notations as in the $k=4$ case, i.e we denote by 
\[
X=X_0\overset{\varphi_0}{\longrightarrow} X_1 \overset{\varphi_1}{\longrightarrow}\ldots \longrightarrow
X_{i-1}\overset{\varphi_{i-1}}{\longrightarrow} X_i \longrightarrow \ldots
\]
the sequence of contractions and surfaces occurring in a directed MMP
for $X$ and by $f_i\colon Y_i \to X_i$ the minimal
resolutions. Proposition~\ref{pro:2}, which we use repeatedly in the
course of the proof, implies that $\rho(X)\in \{1\ldots 5\}$. By
theorem~\ref{thm:2}, $\varphi_0$ is either of type (E.4) or (E.6) and
we claim that (E.4) does not occur.

Suppose for a contradiction that $\varphi_0$ is an (E.4) contraction,
then $X_{1}$ has $5\times \frac1{3}(1,1)+A_1$
singularities. Theorem~\ref{thm:2} implies that $\varphi_1$ is either
of type (E.2), (E.4), (E.5) or (E.6): indeed, $\varphi_1$ can not be a
conic bundle because $X_1$ contains singularities of type
$\frac1{3}(1,1)$ but none of type $A_2$, and it is clear from the
classification that $X_1$ is not a del Pezzo surface with $\rho =1$.

If $\varphi_1$ is of type (E.2), then $X_2$ is a del Pezzo surface
with $k=5$ singularities. As we assume the case $k=5$ is known, this
implies that $X_2=X_{5,5/3}$, a contradiction since $\rho(X_2)\leq3$
while $\rho(X_{5,5/3})=5$.

If $\varphi_1$ is a type (E.4) contraction, the surface $X_2$ has
$4\times \frac1{3}(1,1)+2\times A_1$ singularities and its Picard
number is at most three. From this surface, only contractions of type
(E.2), (E.4) and (E.6) are possible: once again $\varphi_2$ cannot be
of fibering type because of the presence of $\frac1{3}(1,1)$
singularities and the absence of those of type $A_2$, and by theorem
\ref{thm:2} it is not a del Pezzo surface of $\rho=1$.

We show that none of these possibilities occur. Indeed, if
$\varphi_2$ is of type (E.2), the del Pezzo surface $X_3$ has $\rho\leq 2$
and $4\times \frac1{3}(1,1)+A_1$ singularities. The following
contraction $\varphi_3$ cannot be a fibration because the number of singularities
is even, and $X_3$ cannot be of Picard rank one by the classification
in theorem \ref{thm:2}. Since a second (E.2) contraction should have
already been performed as $\varphi_1$, $\varphi_3$ can only be of
type (E.4), (E.5) or (E.6). These lead to surfaces of $\rho=1$ and
singularities of type $3\times \frac1{3}(1,1)+2\times
A_1$, $3\times \frac1{3}(1,1)$ and $2\times \frac1{3}(1,1)+A_1+A_2$
respectively, none of which appear in the list in theorem
\ref{thm:2}. 

The same reasoning holds if $\varphi_2$ is of type (E.4). In this
case, the del Pezzo surface $X_3$ has $\rho\leq 2$ and
$3\times \frac1{3}(1,1)+3 \times A_1$ singularities. By the
classification in theorem \ref{thm:2}, $X_3$ is clearly not a conic
bundle over $\PP^1$, nor does it have $\rho=1$. The next contraction
$\varphi_3$ can be either of type (E.2), (E.4), (E.5) or (E.6) and
leads to a surface $X_4$ of Picard rank one. The first three cases
result in at least two points of type $\frac1{3}(1,1)$ on $X_4$, while
the contraction of type (E.6) means that $X_4$ has
$\frac1{3}(1,1)+A_2+3\times A_1$ singularities. None of these
correspond to one of the $\rho=1$ surfaces in theorem \ref{thm:2},
since the only surface of this type containing a $\frac1{3}(1,1)$
point is $\PP(1,1,3)$.

Finally, if $\varphi_2$ is of type (E.6), then $X_3$ has
$2\times \frac1{3}+2\times A_1 + A_2$ singularities. Theorem
\ref{thm:2} implies that the only possible contractions on $X_3$ are
of type (E.3) and (E.6). Indeed, the odd number of singularities and
the presence of two $\frac1{3}(1,1)$ points allow us to respectively
eliminate the possibilities of $\varphi_3$ being of fibering type and
$X_3$ having $\rho=1$. A contraction of type (E.2) would have been
available earlier in the directed MMP since by performing $\varphi_2$
no new singularities of type $A_1$ were created. The same is true for
contractions of type (E.4) and (E.5) which, if available, should have
been done prior to the one of type (E.6). As before, both contractions
would lead to a non-existent del Pezzo surface of $\rho=1$: if
$\varphi_3$ is of type (E.3), then $X_4$ has
$2\times \frac1{3}+2\times A_1$ singularities and if $\varphi_3$ is of
type (E.6) we obtain $2\times A_1 + 2\times A_2$ singularities, a
contradiction to theorem \ref{thm:2}.

If $\varphi_1$ is of type (E.5), the del Pezzo surface $X_2$ has
exactly four $\frac1{3}(1,1)$ points. From our discussion in the case
$k=4$ we obtain that $X_2=X_{4,7/3}$ and since $\rho(X_2)\leq2$ and
$\rho(X_{4,7/3})=5$, this is a contradiction. 

If $\varphi_1$ is of type (E.6), $X_2$ has $3\times \frac1{3}+A_1+A_2$
singularities. This surface is not of Picard rank one, and by theorem
\ref{thm:2} it is not a conic fibration, as it has an odd number of
singularities and too many $\frac1{3}(1,1)$ points. Since by remark
\ref{rem:6and3} a contraction of type (E.3) cannot follow one of type
(E.6) and considering the order of the contractions in the directed
MMP, the only possibility left is that $\varphi_2$ is of type
(E.6). $X_3$ now has singularities of type $\frac1{3}+A_1+2\times A_2$
and no further divisorial contractions are available, again because we
cannot follow with one of type (E.3) and all other possiblities would
have been performed earlier. The singularities on $X_3$ do not however
correspond to any of the surfaces of $\rho=1$ in theorem \ref{thm:2},
nor can they be paired on singular fibers of a conic fibration, for
instance because there is only one point of type $A_1$. We have thus
exhausted all the possibilities for $\varphi_1$.

All of this shows that $\varphi_0$ can not be of type (E.4), thus it
is a contraction of type (E.6) and all the divisorial contractions
that follow must be of the same type. The surface $X_1$ can not be of
$\rho=1$ or of fibering type since it has too many $\frac1{3}(1,1)$
points and an odd number of singularities, thus $\varphi_1$ is also an
(E.6) contraction. As depicted in figure \ref{fig:k=6possibtree}, by
theorem \ref{thm:2} we have two possiblities: either $X_2$ is a conic
bundle with two special fibres of type (C.2), or one last divisorial
contraction is available, leading to the $\rho=1$ surface (D.4).

\begin{figure}[H]
    \caption{$k=6$ tree of possibilities}
    \label{fig:k=6possibtree}
  \centering
  \resizebox{16cm}{!}{\includegraphics[width=0.8\textwidth]{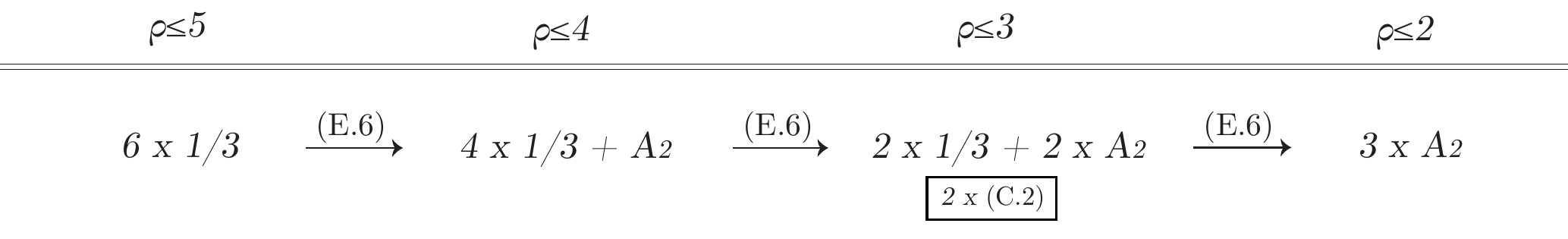}}
\end{figure}

Both instances occur and, as we will see, they lead to the same
surface $X$. This makes sense since at the very beginning of the MMP
there are a total of six contractions of the same type available and
at each step we choose one at the expense of two others. Depending on
their configuration we stop after either two or three contractions,
thus obtaining two end products of the directed MMP for the same $X$.

\paragraph{Case 1}
\label{sec:case1}

$(E.6)+(E.6)+(E.6)$
\medskip

The curve configuration for this case is:
\begin{figure}[H]
  \centering
  \resizebox{16cm}{!}{\includegraphics[width=0.8\textwidth]{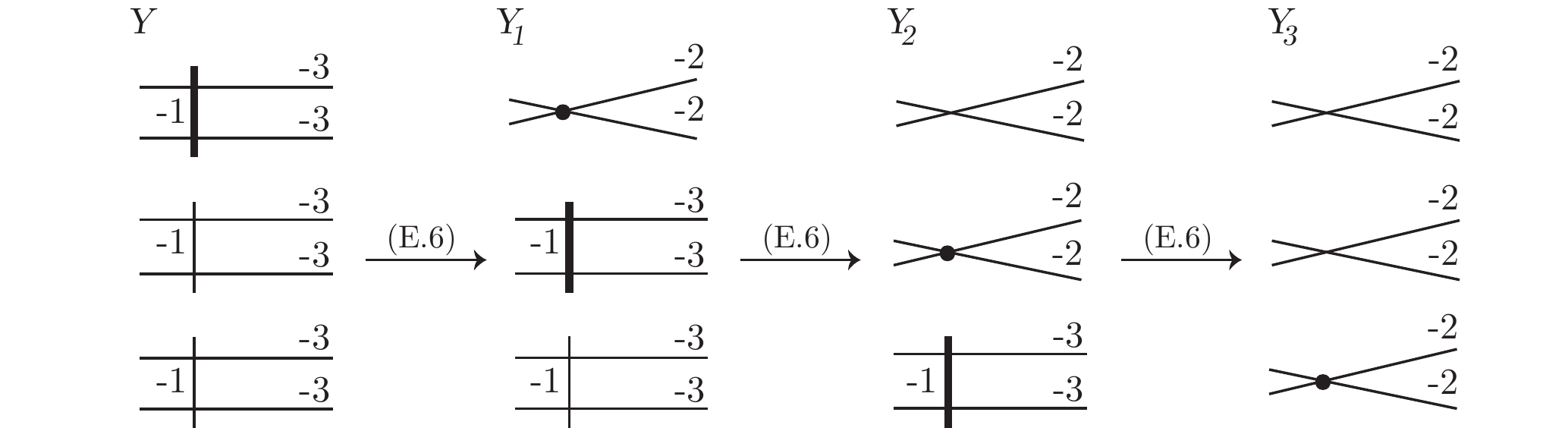}}
  \label{fig:k=6case1cv-cfg}
\end{figure}

Looking more closely at how $Y$ is built from
$Y_3$---the surface of
Picard rank one having three singular points of type $A_2$ described
in theorem~\ref{thm:2} ---by a sequence of blow-ups, we see that $Y$ must have the
negative curves shown in the following figure:

\begin{figure}[H]
  \centering
  \resizebox{16cm}{!}{\includegraphics[width=0.8\textwidth]{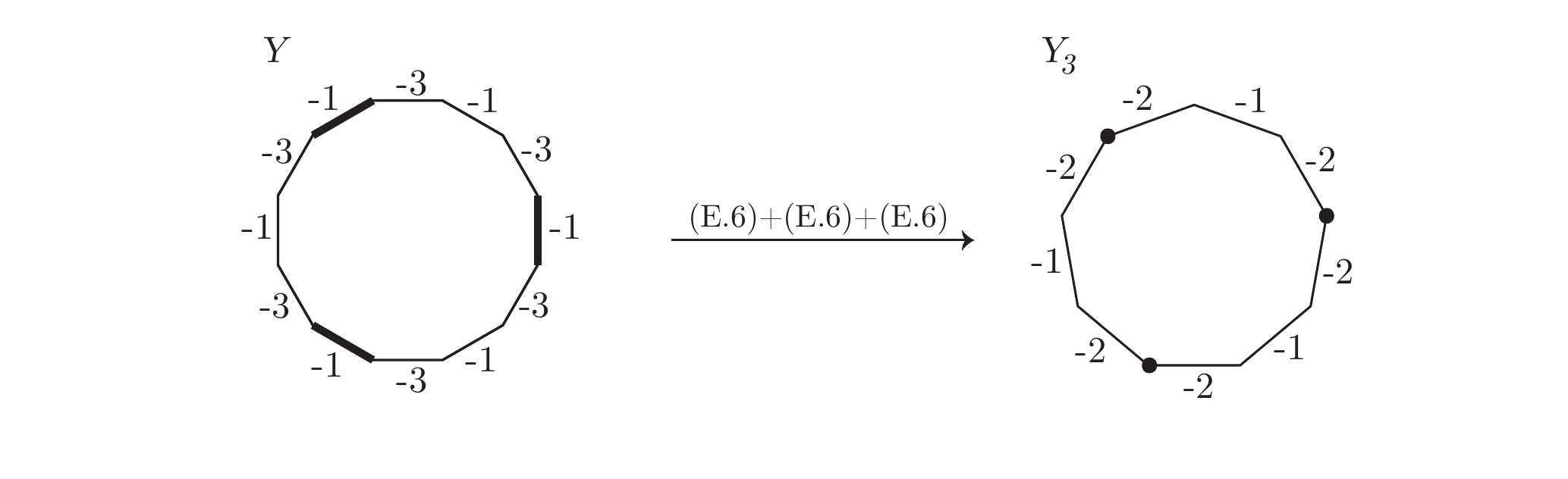}}
  \label{fig:k=6case1exp}
\end{figure}

\paragraph{Case 2}
\label{sec:case2}

$(E.6)+(E.6)$

This sequence ends with a conic fibration having two
singular fibers. On the minimal resolutions, the contractions are
the following:
\begin{figure}[H]
  \caption{Configuration of curves for $k=6$, Case~2}
  \label{fig:k=6caseAcv-cfg}
  \centering
  \resizebox{16cm}{!}{\includegraphics[width=0.8\textwidth]{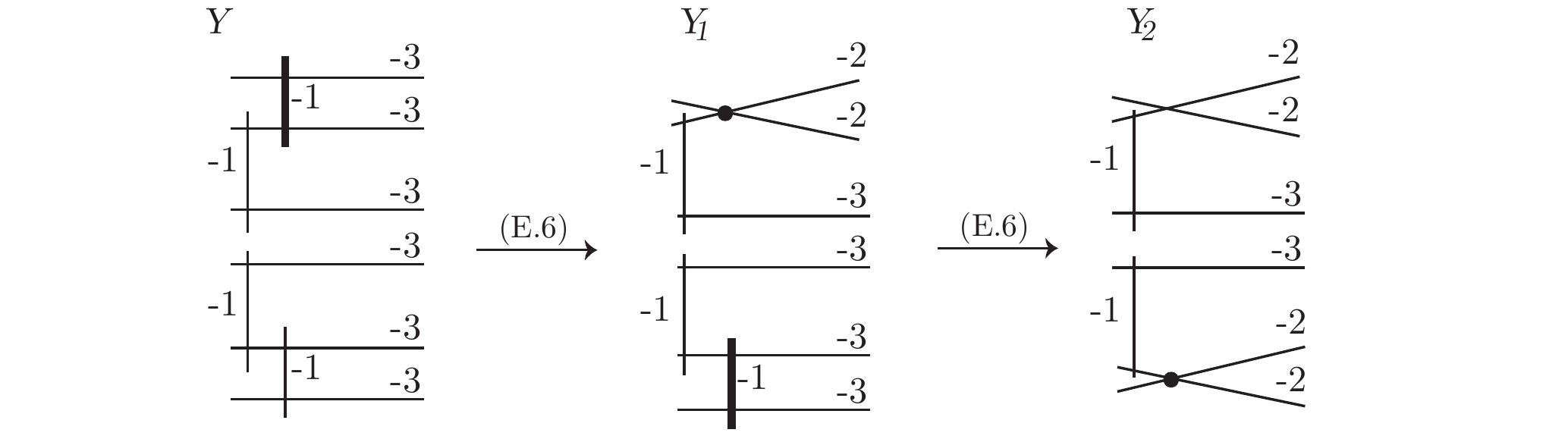}}
\end{figure}

We again proceed to run the nonsingular minimal model program for the
surface $Y_2$ relative to the current fibration over $\PP^1$. As
before, we eventually reach a surface $\FF_k$, where $k\in\{0,1,2\}$,
and we choose our sequence of contractions such that $k$ is
maximal. Figure \ref{fig:k=6caseAcv-cfg} shows that all $(-3)$-curves
on $Y$ either already existed on the special fibers of $Y_2$ or they
come from blowing up points inside these fibers. Thus if $k=2$, the
$(-2)$-section remains as such even on $Y$, which is impossible
according to lemma \ref{rmk:curves on Y}. By maximality, $k=1$, and
then the minimal resolutions must have the negative curves shown in
the following figure:

\begin{figure}[H]
  \centering
  \resizebox{16cm}{!}{\includegraphics[width=0.8\textwidth]{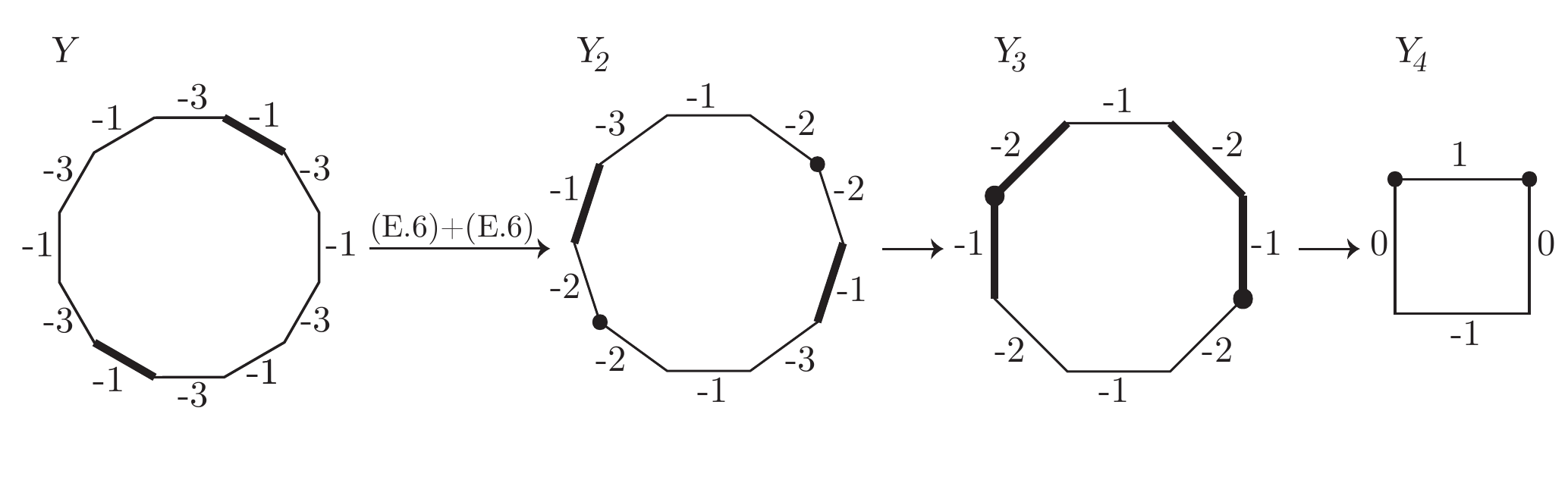}}
  \label{fig:k=6case2exp}
\end{figure}

Cases~1 and~2 are two different directed MMPs starting from the del
Pezzo surface $X_{6,2}$. 

We further give the trees of possibilities for the four remaining
cases. When constructing the tree for a surface with $k_0$ singularities,
we are allowed to use the final working cases for the surfaces with
$k<k_0$ points of type $\frac1{3}(1,1)$. This not only shortens the
process, but also allows us to further exclude certain
sequences of contractions. Indeed, suppose that in the $k_0$ tree a
branch leads to a del Pezzo surface with singularity content
$k \times \frac1{3}(1,1)$, where $k<k_0$. If the sequence in the statement of
theorem \ref{thm:4} for $k$ doesn't correlate with the directed
MMP obtained thus far in the $k_0$ tree, then the entire branch can be
removed. 

\begin{figure}[H]
  \centering
  \caption{$k=1$ Tree}
  \resizebox{16cm}{!}{\includegraphics[width=0.8\textwidth]{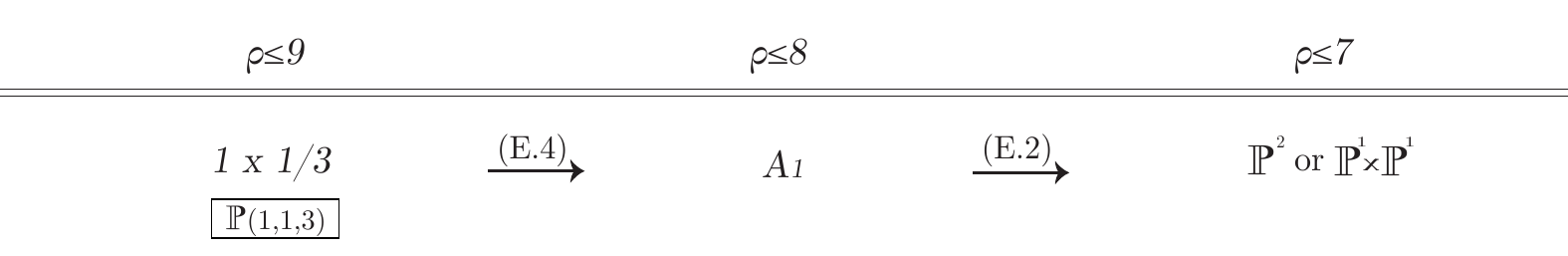}}
  \label{fig:k=1pos}
\end{figure}\begin{figure}[H]
\centering
  \caption{$k=2$ Tree}
  \resizebox{16cm}{!}{\includegraphics[width=0.8\textwidth]{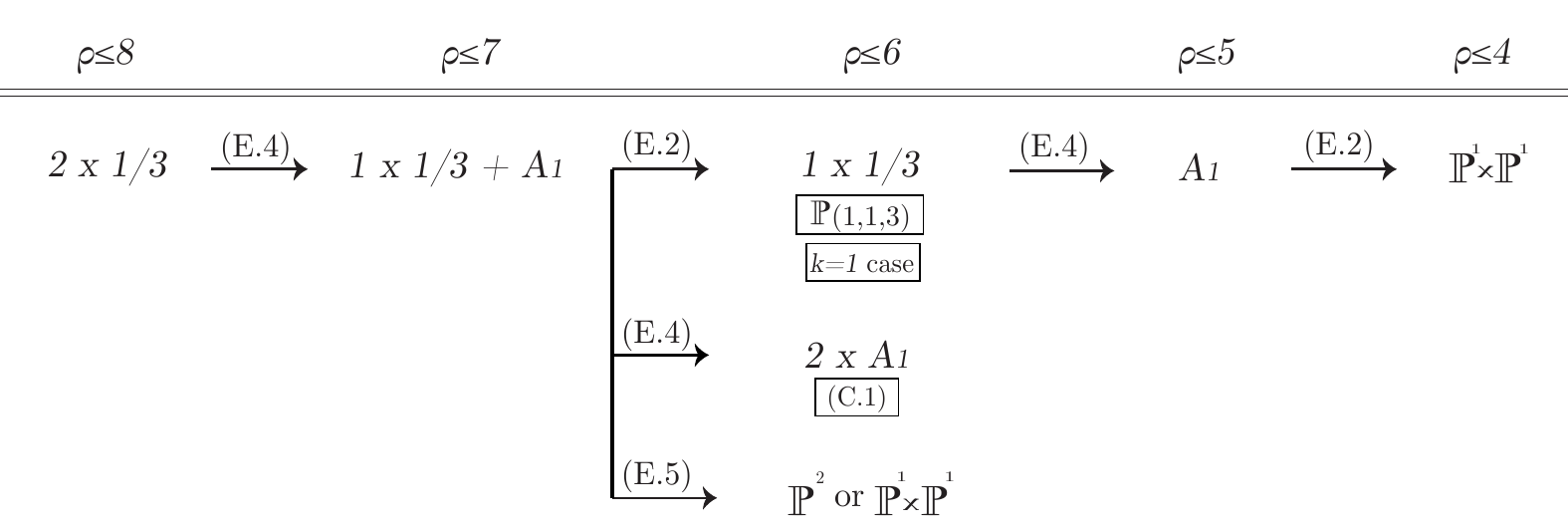}}
  \label{fig:k=2pos}
\end{figure}\begin{figure}[H]
\centering
  \caption{$k=3$ Tree}
  \resizebox{16cm}{!}{\includegraphics[width=0.8\textwidth]{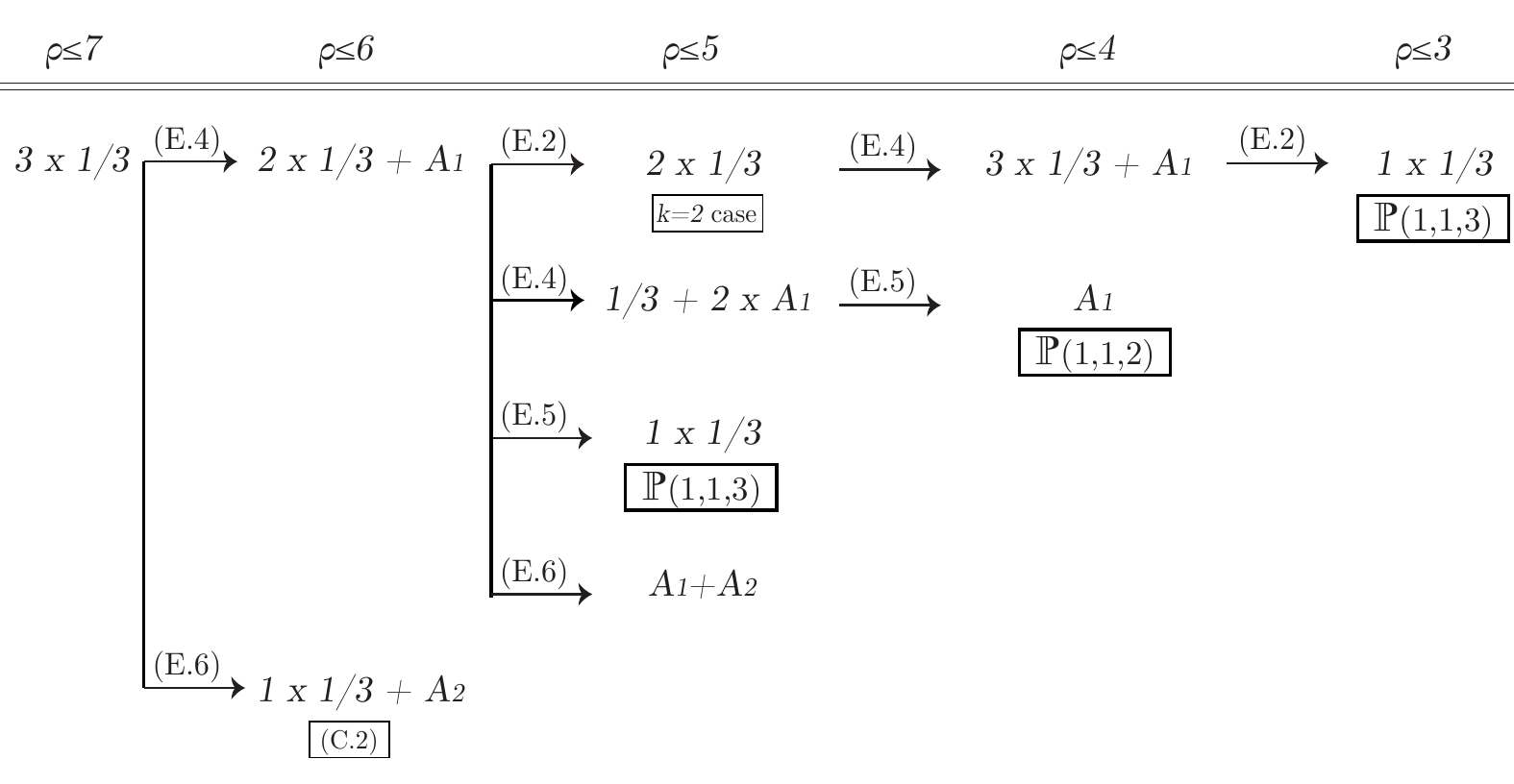}}
  \label{fig:k=3pos}
\end{figure}\begin{figure}[H]
  \centering
  \caption{$k=5$ Tree}
  \resizebox{16cm}{!}{\includegraphics[width=0.8\textwidth]{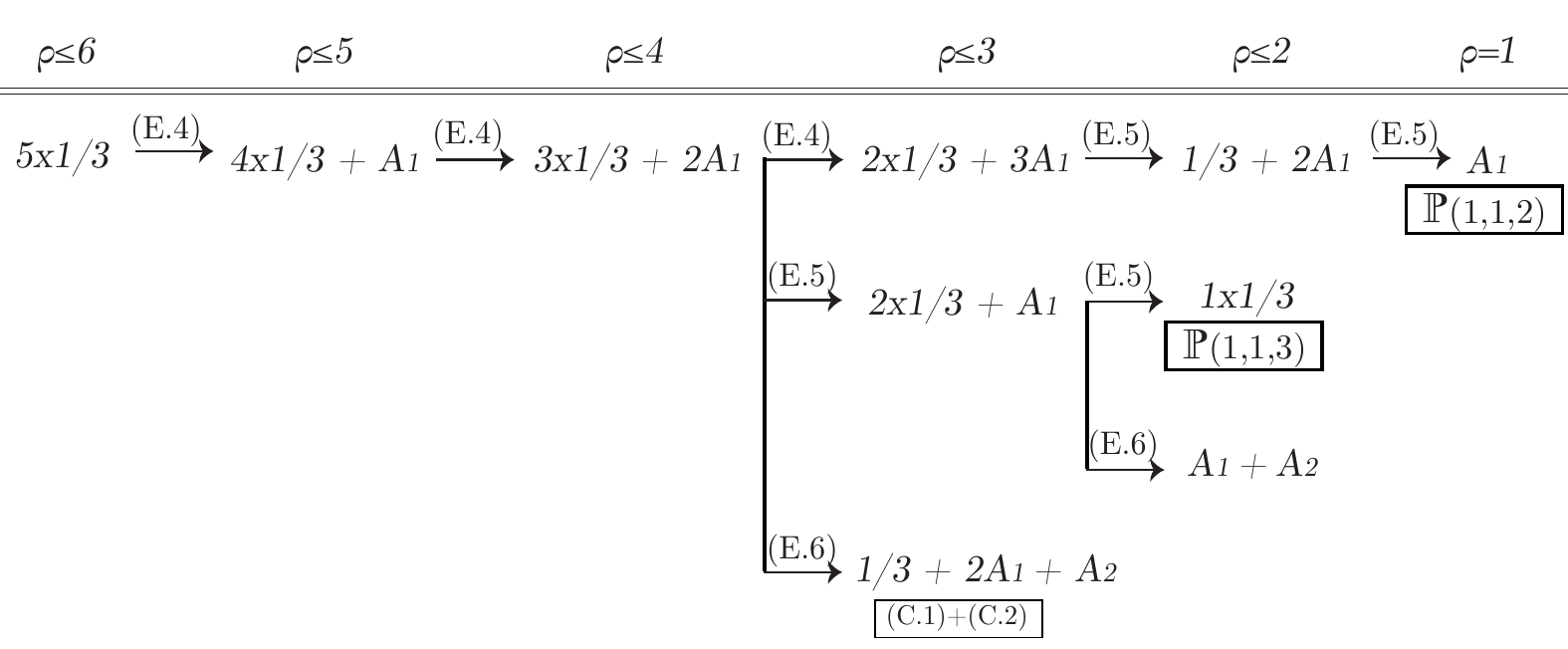}}
  \label{fig:k=5pos}
\end{figure}

Finally, using the techniques presented so far in this section will lead
to a systematic ellimination of the branches so that we are
left with exactly the surfaces in the statement.

\end{proof}


\section{Toric Degenerations}
\label{sec:toric-degenerations}


\begin{proof}[Proof of theorem~\ref{thm:6}] 

  We first argue that $\Xsurf{4}{1/3}$, $\Xsurf{5}{2/3}$ and
  $\Xsurf{6}{1}$ do not admit a toric qG-degeneration.
  For all of these $h^0(X,-K_X)=0$. But we know that $h^0(X,-K_X)$ is
  a qG-deformation invariant, and  if $X_0$ is a a toric surface then
  $H^0(X,-K_{X_0})\neq (0)$ as $|-K_{X_0}|$ always contains at least the
  toric boundary divisor.  We show that the remaining $26$ families do
  indeed admit a toric qG-degeneration.

  Let $N= \ZZ^2$. A Fano polygon is a convex lattice polygon
  $P\subset N_\QQ$ such that: (a) the origin lies strictly in the
  interior of $P$ and (b) the vertices of $P$ are primitive lattice
  vectors.  Table~\ref{tab:third_one_one_polygons} lists $26$ Fano
  polygons---each specified by the list of its
  vertices---$P\subset N_\QQ$ with singularity content
  $(n,\{k\times\frac{1}{3}(1,1)\})$, and
  figure~\ref{fig:third_one_one_polygons} shows pictures of all these
  polygons in turn and the location of the origin in their
  interiors. The singularity content of a polygon is defined in
  \cite[Definition~2.4 and~3.1]{AK14}. It is easy to see that the
  singularity content of $P$ is what it says on the table by looking
  at the picture of the polygon in
  fig.~\ref{fig:third_one_one_polygons}: for all cones of the polygon,
  all you have to do is eyeball the residue of that cone and persuade
  yourself that is is either empty or a $\frac1{3}(1,1)$ cone, and
  count the primitive $T$-cones. Then $n$ is the total number of
  $T$-cones over the whole polygon, and $k$ is the number of nonempty
  residue cones.

  For all $P$ in the table, let $\Sigma(P)$ be the face-fan of $P$ and
  denote by $X_P$ the toric surface constructed from $\Sigma(P)$. Then
  $-K_{X_P}^2=12-n-\frac{5k}{3}$.

  It is explained for instance in \cite[Lemma~6]{surfaces15} that
  qG-deformations of del Pezzo surfaces with cyclic quotient
  singularities are unobstructed. It follows from this that if $P$ is
  a polygon in the table, and it has singularity content
  $(n,\{k\times \frac{1}{3}(1,1)\})$, then $X_P$ qG-deforms to a
  locally qG-rigid del Pezzo surface $X$ with $k$ $\frac1{3}(1,1)$
  points and $K_X^2=K_{X_P}^2$, that is, one of the $26$ remaining
  families that can, in principle, have a toric
  qG-degeneration.

  To prove the theorem, all that is left to do is to determine which
  toric surfaces qG-deform to which locally qG-rigid families. For all
  $i\in \{1,\dots, 26\}$, denote by $Pi$ the $i$\textsuperscript{th}
  polygon of the list. Looking at the table, the only cases where
  there is any ambiguity are $P12$, $P13$---they both have singularity
  content $\bigl(6, 2\times \frac1{3}(1,1)\bigr)$---and $P21$,
  $P22$---they both have singularity content
  $\bigl(5, 1\times \frac1{3}(1,1)\bigr)$. We argue that:
  \begin{enumerate}[(a)]
  \item $X_{P12}$ qG-deforms to $\Bsurf{2}{8/3}$ and $X_{P13}$ to $\Xsurf{2}{8/3}$;
  \item $X_{P21}$ qG-deforms to $\Bsurf{1}{16/3}$ and $X_{P22}$ to $\Xsurf{1}{16/3}$.
  \end{enumerate}
  The best way to show this here is to write down explicitly the
  qG-deformation inside the ambient variety $F$ of
  table~\ref{table:1}. This is not hard to do by hand. 

  For instance $X_{P21}=\PP(1,3,4)$. Denote by $u,v,w$ be the co-ordinates
  of weights $1,3,4$ on $\PP(1,3,4)$ and by $x_0,x_1,x_2,y$ the
  coordinates of weights $1,1,1,3$ on $\PP(1,1,1,3)$. We define an embedding
  $i\colon \PP(1,3,4)\hookrightarrow \PP(1,1,1,3)$, such that $i^\star
  \oo(1)=\oo(4)$, as follows:
\[
 i^\sharp(x_0) = u^4, \; i^\sharp(x_1) = uv, \; i^\sharp(x_2) = w ,
 \quad i^\sharp (y) = v^4
\]
 and it is immediate that the image is the degree $4$ weighted hypersurface given by the
 binomial equation $yx_0-x_1^4=0$. This shows that $X_{P21}$ belongs
 to the family $\Bsurf{1}{16/3}$. 

 The case of $X_{P12}$ is similar and only slightly harder. The vertices
\[
u_0=
\begin{pmatrix}
  3\\1
\end{pmatrix},\;
u_1=
\begin{pmatrix}
  -3\\1
\end{pmatrix},\quad
v=
\begin{pmatrix}
  0\\-1
\end{pmatrix}
\]
 of $P12$ satisfy the relation $u_0+u_1+2v=0$ but together they only
 generate an index $3$ subgroup of $N=\ZZ^2$. This identifies $X_{P12}$
 with the quotient $\PP(1,1,2)/\Bmu_3$ where $\Bmu_3$ acts on the
 homogeneous coordinates $u_0,u_1,v$ with weights
 $\frac1{3},0,\frac1{3}\in \bigl(\frac1{3}\ZZ\bigr)/\ZZ$. Note,
 indeed, that $X_{P12}$ has $2\times \frac1{3}(1,1)$ points at $(1:0:0)$,
 $(0:1:0)$, and $1\times \frac1{6}(1,-1)$ at $(0:0:1)$. Let $L$ be the line
 bundle on $X_{P12}$ of integer weight $2$ and $\Bmu_3$-weight
 $\frac1{3}$. We define an embedding $i\colon X_{P12}\hookrightarrow
 \PP(1,1,3,3)$ with homogeneous coordinates $x_0$, $x_1$, $y_0$, $y_1$, such that $i^\star \oo(1)=L$, as follows:
\[
i^\sharp (x_0)=v,\;i^\sharp (x_1)=u_0u_1,\quad i^\sharp (y_0)=u_0^6,\;i^\sharp (y_1)=u_1^6
\]
and it is immediate that the image $i(X_{P12})\subset \PP(1,1,3,3)$ is the
degree $6$ weighted hypersurface given by the binomial equation
$y_0y_1-x_1^6=0$. This shows that $X_{P12}$ belongs
 to the family $\Bsurf{2}{8/3}$. 

Consider now $P13$. The vertices
\[
u_0=\begin{pmatrix}-1\\2\end{pmatrix},\;
u_1=\begin{pmatrix}2\\-1\end{pmatrix},\quad
v_0=\begin{pmatrix}-1\\-1\end{pmatrix},\;
v_1=\begin{pmatrix}1\\1\end{pmatrix}
\]
of $P13$ satisfy the relations:
\[
u_0 + u_1 + v_0 =0 \quad \text{and}\quad
v_0+v_1=0
\]
but together they only generate an index $3$ subgroup of
$N=\ZZ^2$. This identifies $X_{P13}$ with the quotient $\FF_1/\Bmu_3$ where
$\Bmu_3$ acts on $\FF_1$ as follows. Choose Cox coordinates and weight matrix for $\FF_1$ as:
\[
\begin{array}{cccc}
u_0 & u_1 & v_0 & v_1 \\
\hline
1 & 1 & 1 & 0 \\
0 & 0 & 1 & 1   
\end{array}
\]
The action of $\Bmu_3$ on $\FF_1$ is by weights
$\frac1{3},0,0,\frac1{3}\in \bigl(\frac1{3}\ZZ\bigr)/\ZZ$ on the Cox
coordinates. Also denote by $M_1$ the line bundle on $\FF_1$ with of
bidegree $(1,0)$ and trivial $\Bmu_3$-weight $\frac1{3}$ (any
$\Bmu_3$-weight will do here), and by $M_2$ the line bundle of
bidegree $(0,1)$ and $\Bmu_3$-weight $\frac1{3}$ (we need this
particular $\Bmu_3$-weight here). We construct an embedding
$i\colon X_{P13}\hookrightarrow F$ where $F$ is the Fano simplicial
toric \mbox{3-fold} with weight matrix and Cox coordinates:
\[
\begin{array}{ccccc}
s_0 & s_1 & x_0 & y & x_1 \\
\hline
1 & 1 & 1 & 1 & 0 \\
0 & 0 & 1 & 3 & 1   
\end{array}
\]
where we also denote by $L_1$, $L_2$ the standard basis of $\Cl F$,
such that $i^\star (L_1)=3M_1$ and $i^\star L_2=M_2$, as follows:
\[
i^\sharp(s_0)=u_0^3,\;
i^\sharp(s_1)=u_1^3,\quad
i^\sharp(x_0)=u_0u_1v_0,\;
i^\sharp(y)=v_0^3,\;
i^\sharp(x_1)=v_1
\]
and it is immediate that the image $i(X_{P13})\subset F$ is the
hypersurface of bidegree $(3,3)$ given by the binomial equation
$s_0s_1y-x_0^3=0$. This shows that $X_{P13}$ belongs to the family $\Xsurf{2}{8/3}$.

 The final case $P22$ is very similar and, in fact, easier. The vertices
\[
u_0=\begin{pmatrix}-1\\2\end{pmatrix},\;
u_1=\begin{pmatrix}0\\-1\end{pmatrix},\quad
v_0=\begin{pmatrix}1\\1\end{pmatrix},\;
v_1=\begin{pmatrix}-1\\-1\end{pmatrix}
\]
of $P22$ generate $N=\ZZ^2$ and satisfy the relations:
\[
u_0 + 3u_1 - v_1 =0 \quad \text{and}\quad
v_0+v_1=0
\]
This identifies $X_{P22}$ with the toric surface with Cox coordinates and weight matrix:
\[
\begin{array}{cccc}
u_0 & u_1 & v_0 & v_1 \\
\hline
1 & 3 & 0 & -1 \\
0 & 0 & 1 & 1   
\end{array}
\]
Denote by $M_1$, $M_2$ the standard basis of $\Cl X_{P22}$. We construct an embedding
$i\colon X_{P22}\hookrightarrow \PP^1\times \PP(1,1,3)$ such that
$i^\star \oo(1,0)=3M_1$, $i^\star \oo(0,1)=M_2$ and,  choosing 
homogeneous coordinates $s_0$, $s_1$ on $\PP^1$ and $x_0$, $x_1$, $y$ on
$\PP(1,1,2)$:
\[
i^\sharp(s_0)=u_0^3,\;
i^\sharp(s_1)=u_1,\quad
i^\sharp(x_0)=v_0,\;
i^\sharp(x_1)=u_0v_1,\;
i^\sharp(y)=u_1v_1^3
\]
and it is immediate that the image $i(X_{P22})\subset \PP^1\times \PP(1,1,3)$ is the
hypersurface of bidegree $(1,3)$ given by the binomial equation
$s_0y-s_1x_1^3=0$. This shows that $X_{P22}$ belongs to the family $\Xsurf{1}{16/3}$.
\end{proof}

\begin{rem}
  \label{rem:10} Our proof of theorem~\ref{thm:6} is by very efficient
  ad hoc considerations.  The Gross--Siebert program is a systematic
  approach to constructing qG-deformations of toric surfaces, see
  \cite{prince:_smoot_toric_fano_surfac_using}. Paper
  \cite{coates:_lauren} gives a systematic approach to constructing
  qG-deformations as toric complete intersections, when they exist.
\end{rem}

\begin{table}[ht]
\centering
\caption{$26$ Fano polygons $P\subset N_\QQ$ with singularity content $(n,\{k\times\frac{1}{3}(1,1)\})$}
\label{tab:third_one_one_polygons}
\setlength{\extrarowheight}{0.2em}
\begin{tabular}{rlccc}
\toprule
\multicolumn{1}{c}{\#}&\multicolumn{1}{c}{$\V{P}$}&$n$&$k$&Deforms to\\
\cmidrule(lr){1-1} \cmidrule(lr){2-2} \cmidrule(lr){3-5}
\oddrow \padding $1$&\gap $(7,5),(-3,5),(-3,-5)$&$10$&$1$&$\Xsurf{1}{1/3}$\\ 
\evnrow \padding $2$&\gap $(3,2),(-3,2),(-3,-2),(3,-2)$&$8$&$2$&$\Xsurf{2}{2/3}$\\ 
\oddrow \padding $3$&\gap $(3,1),(3,2),(-1,2),(-2,1),(-2,-3),(-1,-3)$&$6$&$3$&$\Xsurf{3}{1}$\\ 
\evnrow \padding $4$&\gap $(3,2),(-1,2),(-2,1),(-2,-3)$&$9$&$1$&$\Xsurf{1}{4/3}$\\ 
\oddrow \padding $5$&\gap $(2,1),(1,2),(-1,2),(-2,1),(-2,-1),(-1,-2),(1,-2),(2,-1)$&$4$&$4$&$\Xsurf{4}{4/3}$\\ 
\evnrow \padding $6$&\gap $(3,2),(-1,2),(-2,1),(-2,-1),(-1,-2)$&$7$&$2$&$\Xsurf{2}{5/3}$\\ 
\oddrow \padding $7$&\gap $(2,1),(1,2),(-1,2),(-2,1),(-2,-1),(-1,-2),(1,-1)$&$2$&$5$&$\Xsurf{5}{5/3}$\\ 
\evnrow \padding $8$&\gap $(2,1),(1,2),(-1,2),(-2,1),(-2,-1),(-1,-2)$&$5$&$3$&$\Xsurf{3}{2}$\\ 
\oddrow \padding $9$&\gap $(1,1),(-1,2),(-2,1),(-1,-1),(1,-2),(2,-1)$&$0$&$6$&$\Xsurf{6}{2}$\\ 
\evnrow \padding $10$&\gap $(1,1),(-1,2),(-1,-2),(1,-2)$&$8$&$1$&$\Xsurf{1}{7/3}$\\ 
\oddrow \padding $11$&\gap $(1,1),(-1,2),(-2,1),(-1,-1),(2,-1)$&$3$&$4$&$\Xsurf{4}{7/3}$\\ 
\evnrow \padding $12$&\gap $(3,1),(-3,1),(0,-1)$&$6$&$2$&$\Bsurf{2}{8/3}$\\ 
\oddrow \padding $13$&\gap $(1, 1),(-1, 2),(-1, -1),(2, -1)$&$6$&$2$&$\Xsurf{2}{8/3}$\\ 
\evnrow \padding $14$&\gap $(1,1),(-1,2),(-2,1),(-1,-1),(1,-1)$&$4$&$3$&$\Xsurf{3}{3}$\\ 
\oddrow \padding $15$&\gap $(1,1),(-1,2),(-1,-1),(1,-1)$&$7$&$1$&$\Xsurf{1}{10/3}$\\ 
\evnrow \padding $16$&\gap $(1, 1),(-1, 2),(-1, 0),(0, -1),(2, -1)$&$5$&$2$&$\Xsurf{2}{11/3}$\\ 
\oddrow \padding $17$&\gap $(1,0),(1,1),(-1,2),(-2,1),(-1,-1),(0,-1)$&$3$&$3$&$\Xsurf{3}{4}$\\ 
\evnrow \padding $18$&\gap $(1,0),(0,1),(-1,1),(-1,-3)$&$6$&$1$&$\Xsurf{1}{13/3}$\\ 
\oddrow \padding $19$&\gap $(1, 1),(-1, 2),(-1, 1),(0, -1),(2, -1)$&$4$&$2$&$\Xsurf{2}{14/3}$\\ 
\evnrow \padding $20$&\gap $(1,1),(-1,2),(-2,1),(-1,-1),(0,-1)$&$2$&$3$&$\Xsurf{3}{5}$\\ 
\oddrow \padding $21$&\gap $(1,1),(-1,2),(-1,-2)$&$5$&$1$&$\Bsurf{1}{16/3}$\\ 
\evnrow \padding $22$&\gap $(1, 1),(-1, 2),(-1, -1),(0, -1)$&$5$&$1$&$\Xsurf{1}{16/3}$\\ 
\oddrow \padding $23$&\gap $(1, 1),(-1, 2),(0, -1),(2, -1)$&$3$&$2$&$\Xsurf{2}{17/3}$\\ 
\evnrow \padding $24$&\gap $(0,1),(-1,2),(-2,1),(-1,0),(1,-1)$&$4$&$1$&$\Xsurf{1}{19/3}$\\ 
\oddrow \padding $25$&\gap $(0,1),(-1,2),(-2,1),(1,-1)$&$3$&$1$&$\Xsurf{1}{22/3}$\\ 
\evnrow \padding $26$&\gap $(-1,2),(-2,1),(1,-1)$&$2$&$1$&$\Ssurf{1}{25/3}$\\ 
\bottomrule
\end{tabular}
\end{table}

\begin{figure}[ht]
  \caption{$26$ Fano polygons $P\subset N_\QQ$ with singularity
    content $(n,\{k\times\frac{1}{3}(1,1)\})$. See also
    Table~\ref{tab:third_one_one_polygons}.}
\label{fig:third_one_one_polygons}
\centering
\vspace{0.5em}
\renewcommand{\arraystretch}{0.8}
\begin{tabular}{cc}
$1$&
$2$\\
\includegraphics[scale=0.5]{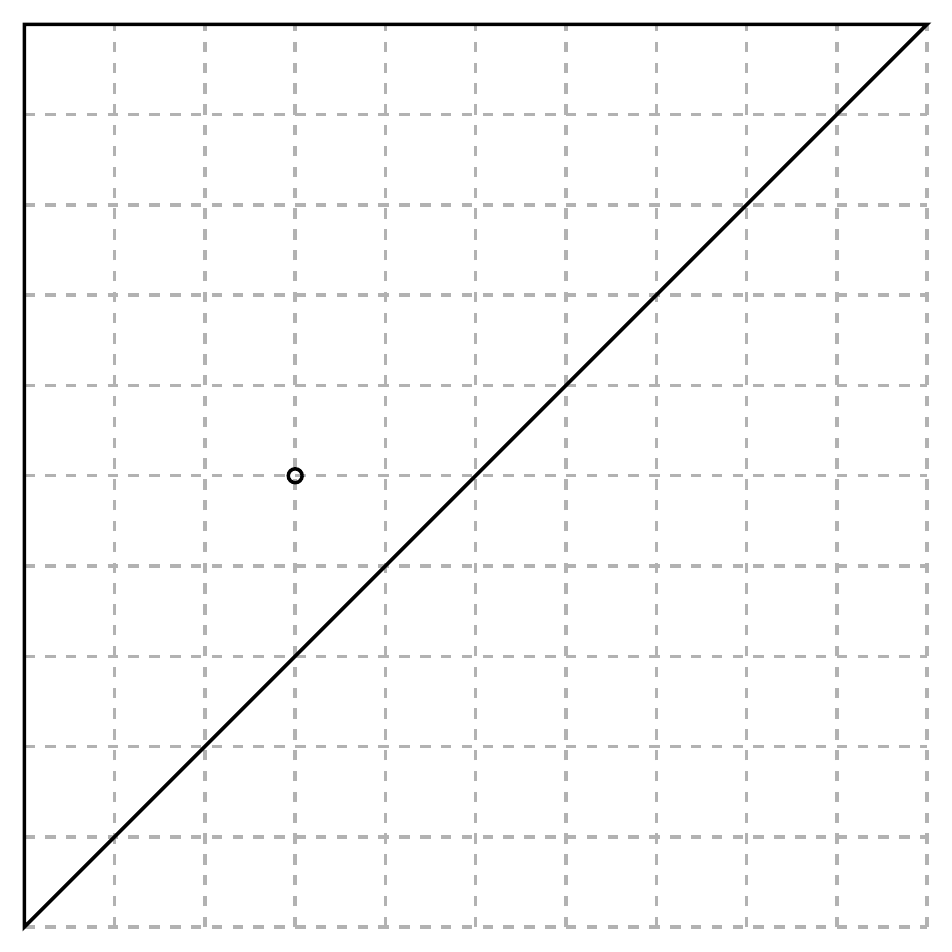}&
\raisebox{78px}{\includegraphics[scale=0.5]{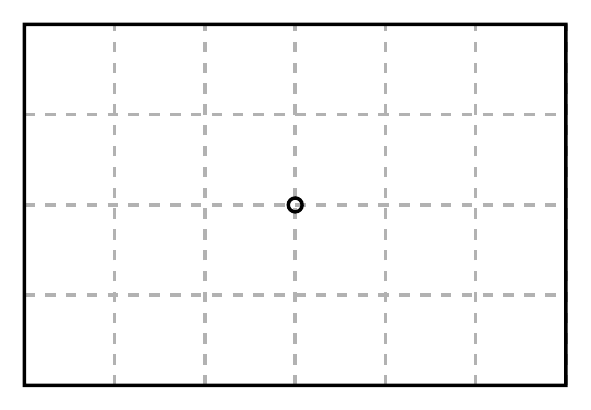}}\\
\end{tabular}
\vgap

\begin{tabular}{ccccc}
$3$&
$4$&
$5$&
$6$&
$7$\\
\includegraphics[scale=0.5]{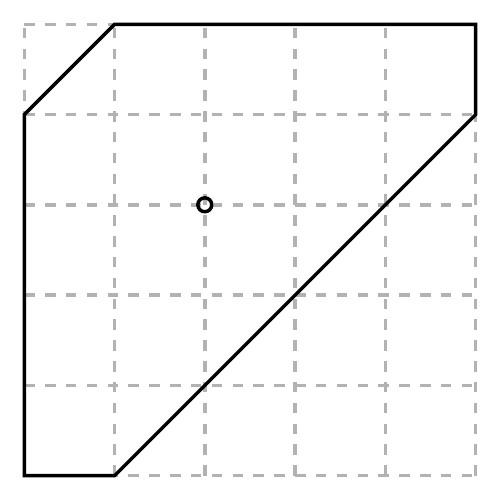}&
\includegraphics[scale=0.5]{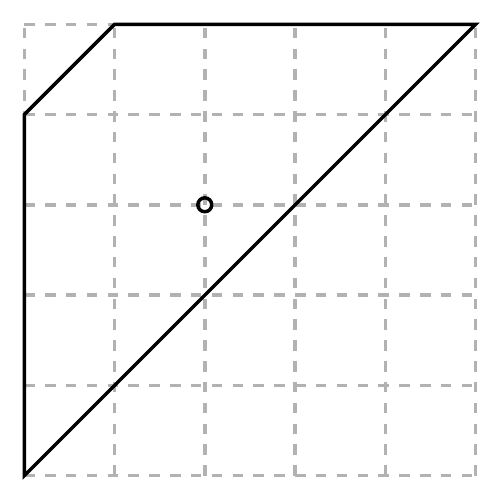}&
\raisebox{13px}{\includegraphics[scale=0.5]{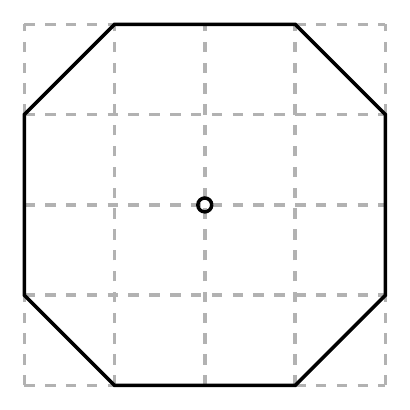}}&
\raisebox{13px}{\includegraphics[scale=0.5]{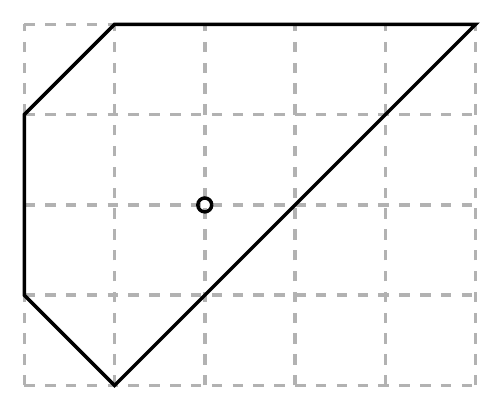}}&
\raisebox{13px}{\includegraphics[scale=0.5]{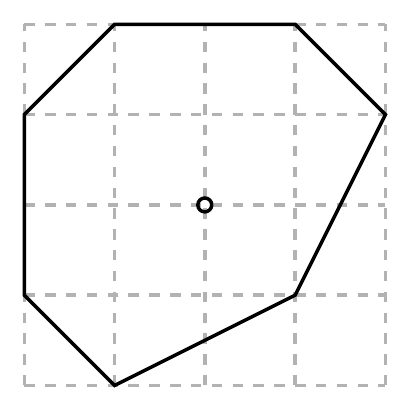}}\\
\end{tabular}
\vgap

\begin{tabular}{cccccc}
$8$&
$9$&
$10$&
$11$&
$12$&
$13$\\
\includegraphics[scale=0.5]{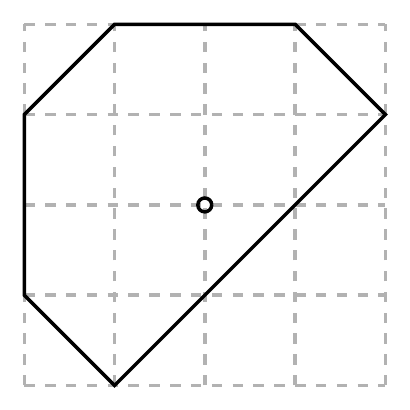}&
\includegraphics[scale=0.5]{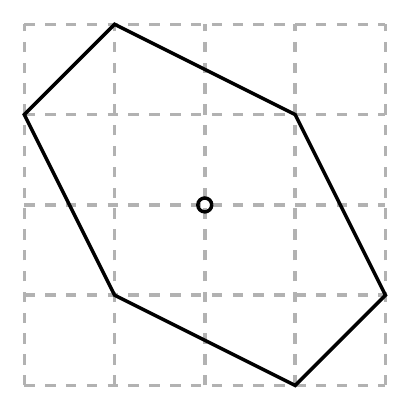}&
\includegraphics[scale=0.5]{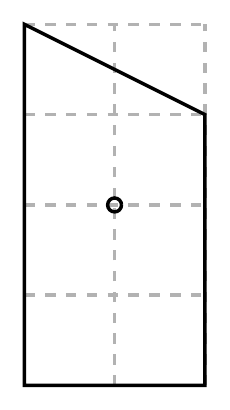}&
\raisebox{13px}{\includegraphics[scale=0.5]{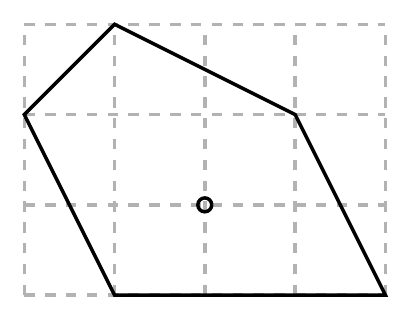}}&
\raisebox{26px}{\includegraphics[scale=0.5]{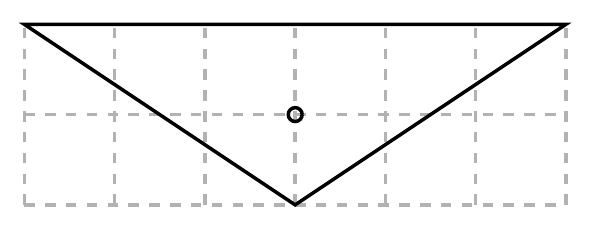}}&
\raisebox{13px}{\includegraphics[scale=0.5]{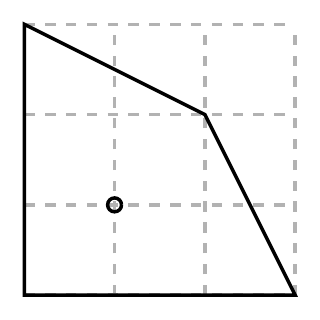}}\\
\end{tabular}
\vgap

\begin{tabular}{ccccccc}
$14$&
$15$&
$16$&
$17$&
$18$&
$19$&
$20$\\
\raisebox{13px}{\includegraphics[scale=0.5]{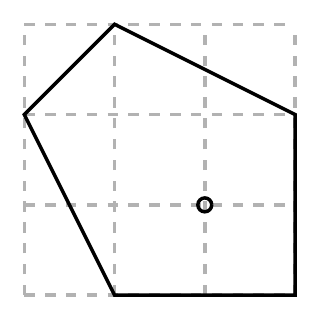}}&
\raisebox{13px}{\includegraphics[scale=0.5]{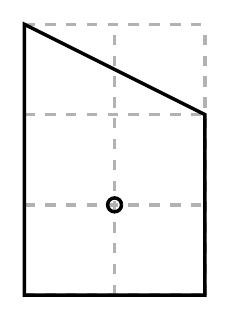}}&
\raisebox{13px}{\includegraphics[scale=0.5]{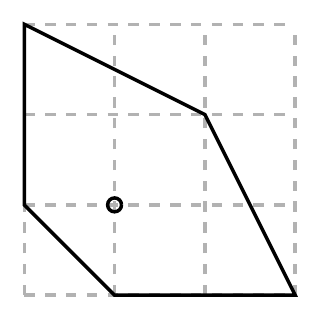}}&
\raisebox{13px}{\includegraphics[scale=0.5]{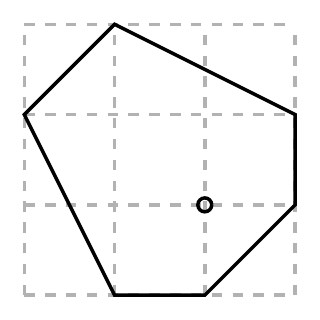}}&
\includegraphics[scale=0.5]{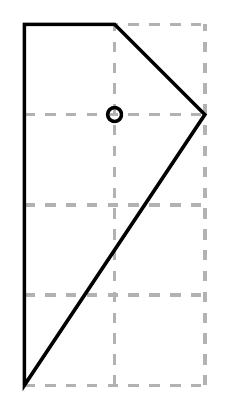}&
\raisebox{13px}{\includegraphics[scale=0.5]{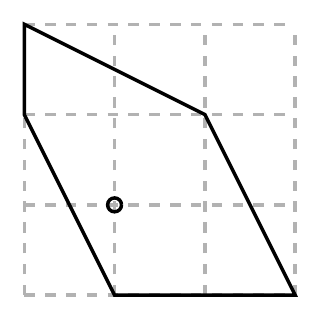}}&
\raisebox{13px}{\includegraphics[scale=0.5]{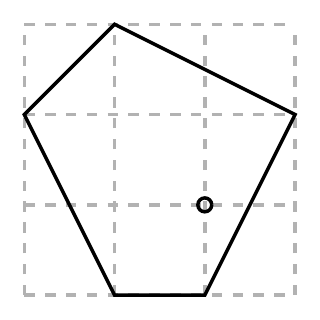}}\\
\end{tabular}
\vgap

\begin{tabular}{ccccccc}
$21$&
$22$&
$23$&
$24$&
$25$&
$26$\\
\includegraphics[scale=0.5]{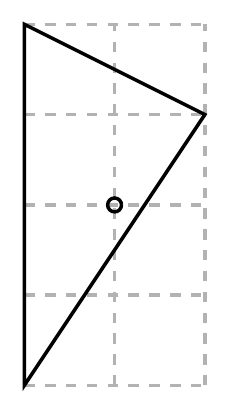}&
\raisebox{13px}{\includegraphics[scale=0.5]{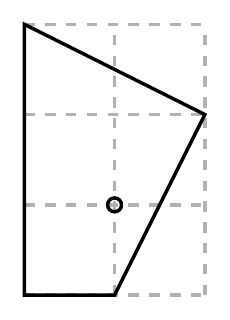}}&
\raisebox{13px}{\includegraphics[scale=0.5]{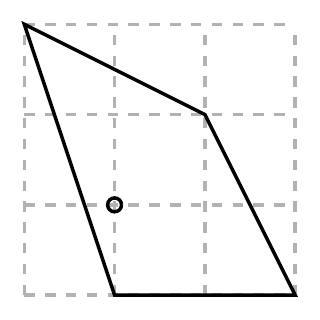}}&
\raisebox{13px}{\includegraphics[scale=0.5]{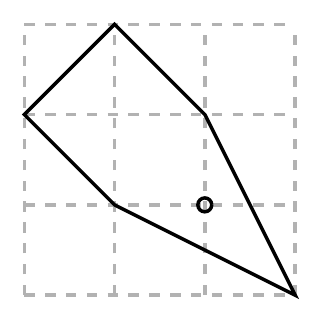}}&
\raisebox{13px}{\includegraphics[scale=0.5]{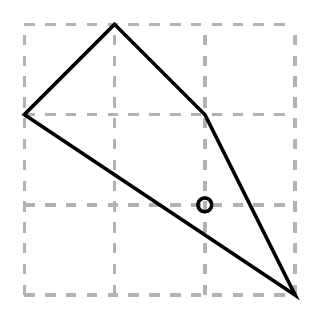}}&
\raisebox{13px}{\includegraphics[scale=0.5]{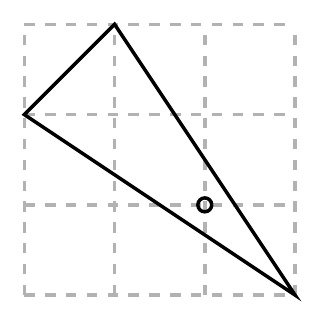}}\\
\end{tabular}
\end{figure}


\bibliography{bib_aclh}

\begin{thebibliography}{10}

\bibitem{A13}
Hamid Ahmadinezhad.
\newblock On pliability of del {P}ezzo fibrations and {C}ox rings.
\newblock \href{http://arxiv.org/abs/1304.4357}{\texttt{arXiv:1304.4357
  [math.AG]}}.

\bibitem{surfaces15}
Mohammad Akhtar, Tom Coates, Alessio Corti, Liana Heuberger, Alexander
  Kasprzyk, Alessandro Oneto, Andrea Petracci, Thomas Prince, and Ketil
  Tveiten.
\newblock Mirror {S}ymmetry and the {C}lassification of {O}rbifold del {P}ezzo
  {S}urfaces.
\newblock \href{http://arxiv.org/abs/1501.05334}{\texttt{arXiv:1501.05334
  [math.AG]}}, to appear in {P}roceedings of the {A}merican {M}athematical
  {S}ociety.

\bibitem{AK14}
Mohammad Akhtar and Alexander Kasprzyk.
\newblock Singularity content.
\newblock \href{http://arxiv.org/abs/1401.5458}{\texttt{arXiv:1401.5458
  [math.AG]}}.

\bibitem{MR1787240}
Valery Alexeev and Viacheslav~V. Nikulin.
\newblock Classification of del {P}ezzo surfaces with log-terminal
  singularities of index {$\leq 2$}, involutions on {$K3$} surfaces, and
  reflection groups in {L}obachevski\u\i\ spaces.
\newblock In {\em Lectures in mathematics and its applications, {V}ol. 2, {N}o.
  2 ({R}ussian)}, pages 51--150. Ross. Akad. Nauk, Inst. Mat. im. Steklova,
  Moscow, 1988.

\bibitem{MR1009466}
Valery Alexeev and Viacheslav~V. Nikulin.
\newblock Classification of del {P}ezzo surfaces with log-terminal
  singularities of index {$\le 2$} and involutions on {$K3$} surfaces.
\newblock {\em Dokl. Akad. Nauk SSSR}, 306(3):525--528, 1989.

\bibitem{MR2227002}
Valery Alexeev and Viacheslav~V. Nikulin.
\newblock {\em Del {P}ezzo and {$K3$} surfaces}, volume~15 of {\em MSJ
  Memoirs}.
\newblock Mathematical Society of Japan, Tokyo, 2006.

\bibitem{MR2114820}
Lev~A. Borisov, Linda Chen, and Gregory~G. Smith.
\newblock The orbifold {C}how ring of toric {D}eligne-{M}umford stacks.
\newblock {\em J. Amer. Math. Soc.}, 18(1):193--215 (electronic), 2005.

\bibitem{MR2367022}
Ionu{\c{t}} Ciocan-Fontanine, Bumsig Kim, and Claude Sabbah.
\newblock The abelian/nonabelian correspondence and {F}robenius manifolds.
\newblock {\em Invent. Math.}, 171(2):301--343, 2008.

\bibitem{MR2276766}
Tom Coates and Alexander Givental.
\newblock Quantum {R}iemann-{R}och, {L}efschetz and {S}erre.
\newblock {\em Ann. of Math. (2)}, 165(1):15--53, 2007.

\bibitem{coates:_lauren}
Tom Coates, Alexander Kasprzyk, and Thomas Prince.
\newblock Laurent inversion.
\newblock \href{http://arxiv.org/abs/1505.01855}{\texttt{arXiv:1505.01855
  [math.AG]}}.

\bibitem{MR1954062}
Alessio Corti and Miles Reid.
\newblock Weighted {G}rassmannians.
\newblock In {\em Algebraic geometry}, pages 141--163. de Gruyter, Berlin,
  2002.

\bibitem{MR1299003}
David~A. Cox.
\newblock The homogeneous coordinate ring of a toric variety.
\newblock {\em J. Algebraic Geom.}, 4(1):17--50, 1995.

\bibitem{MR2774310}
Barbara Fantechi, Etienne Mann, and Fabio Nironi.
\newblock Smooth toric {D}eligne-{M}umford stacks.
\newblock {\em J. Reine Angew. Math.}, 648:201--244, 2010.

\bibitem{FY14}
Kento Fujita and Kazunori Yasutake.
\newblock Classification of log del {P}ezzo surfaces of index three.
\newblock \href{http://arxiv.org/abs/1401.1283}{\texttt{arXiv:1401.1283
  [math.AG]}}.

\bibitem{MR868434}
Mikio Furushima.
\newblock Singular del {P}ezzo surfaces and analytic compactifications of
  {$3$}-dimensional complex affine space {${\bf C}^3$}.
\newblock {\em Nagoya Math. J.}, 104:1--28, 1986.

\bibitem{KNP14}
Alexander~M. Kasprzyk, Benjamin Nill, and Thomas Prince.
\newblock Minimality and mutation-equivalence of polygons.
\newblock \href{http://arxiv.org/abs/1501.05335}{\texttt{arXiv:1501.05335
  [math.AG]}}.

\bibitem{MR1610249}
Se{\'a}n Keel and James M\textsuperscript{c}Kernan.
\newblock Rational curves on quasi-projective surfaces.
\newblock {\em Mem. Amer. Math. Soc.}, 140(669):viii+153, 1999.

\bibitem{MR1225842}
J{\'a}nos Koll{\'a}r et~al.
\newblock {\em Flips and abundance for algebraic threefolds}.
\newblock Soci\'et\'e Math\'ematique de France, Paris, 1992.
\newblock Papers from the Second Summer Seminar on Algebraic Geometry held at
  the University of Utah, Salt Lake City, Utah, August 1991, Ast{\'e}risque No.
  211 (1992).

\bibitem{MR922803}
J\'anos Koll{\'a}r and Nicholas~I. Shepherd-Barron.
\newblock Threefolds and deformations of surface singularities.
\newblock {\em Invent. Math.}, 91(2):299--338, 1988.

\bibitem{MR1116920}
Marco Manetti.
\newblock Normal degenerations of the complex projective plane.
\newblock {\em J. Reine Angew. Math.}, 419:89--118, 1991.

\bibitem{MR787190}
David~R. Morrison.
\newblock The birational geometry of surfaces with rational double points.
\newblock {\em Math. Ann.}, 271(3):415--438, 1985.

\bibitem{MR2372472}
Noboru Nakayama.
\newblock Classification of log del {P}ezzo surfaces of index two.
\newblock {\em J. Math. Sci. Univ. Tokyo}, 14(3):293--498, 2007.

\bibitem{MR972094}
Viacheslav~V. Nikulin.
\newblock {D}el {P}ezzo surfaces with log-terminal singularities. {II}.
\newblock {\em Izv. Akad. Nauk SSSR Ser. Mat.}, 52(5):1032--1050, 1119, 1988.

\bibitem{MR993456}
Viacheslav~V. Nikulin.
\newblock Del {P}ezzo surfaces with log-terminal singularities.
\newblock {\em Mat. Sb.}, 180(2):226--243, 304, 1989.

\bibitem{MR1039966}
Viacheslav~V. Nikulin.
\newblock {D}el {P}ezzo surfaces with log-terminal singularities. {III}.
\newblock {\em Izv. Akad. Nauk SSSR Ser. Mat.}, 53(6):1316--1334, 1338, 1989.

\bibitem{OP14}
Alessandro Oneto and Andrea Petracci.
\newblock On the quantum periods of del {P}ezzo surfaces with $\frac1{3}(1,1)$
  singularities.
\newblock \href{http://arxiv.org/abs/1507.08589}{\texttt{arXiv:1507.08589
  [math.AG]}}.

\bibitem{prince:_smoot_toric_fano_surfac_using}
Thomas Prince.
\newblock Smoothing {T}oric {F}ano {S}urfaces {U}sing the {G}ross--{S}iebert
  {A}lgorithm.
\newblock \href{http://arxiv.org/abs/1504.05969}{\texttt{arXiv:1504.05969
  [math.AG]}}.

\bibitem{MR927963}
Miles Reid.
\newblock Young person's guide to canonical singularities.
\newblock In {\em Algebraic geometry, {B}owdoin, 1985 ({B}runswick, {M}aine,
  1985)}, volume~46 of {\em Proc. Sympos. Pure Math.}, pages 345--414. Amer.
  Math. Soc., Providence, RI, 1987.

\bibitem{MR2053462}
Miles Reid and Kaori Suzuki.
\newblock Cascades of projections from log del {P}ezzo surfaces.
\newblock In {\em Number theory and algebraic geometry}, volume 303 of {\em
  London Math. Soc. Lecture Note Ser.}, pages 227--249. Cambridge Univ. Press,
  Cambridge, 2003.

\bibitem{MR2578300}
Hsian-Hua Tseng.
\newblock Orbifold quantum {R}iemann-{R}och, {L}efschetz and {S}erre.
\newblock {\em Geom. Topol.}, 14(1):1--81, 2010.

\bibitem{tveiten14:_maxim_lauren}
Ketil Tveiten.
\newblock Period integrals and mutation.
\newblock \href{http://arxiv.org/abs/1501.05095}{\texttt{arXiv:1501.05095
  [math.AG]}}.

\bibitem{MR957874}
De-Qi Zhang.
\newblock Logarithmic del {P}ezzo surfaces of rank one with contractible
  boundaries.
\newblock {\em Osaka J. Math.}, 25(2):461--497, 1988.

\bibitem{MR1007096}
De-Qi Zhang.
\newblock Logarithmic del {P}ezzo surfaces with rational double and triple
  singular points.
\newblock {\em Tohoku Math. J. (2)}, 41(3):399--452, 1989.

\end{thebibliography}
\end{document}